\newtheorem{thm}{Theorem}
\newtheorem{pro}{Proposition}
\newtheorem{cor}{Corollary}
\newtheorem{lem}{Lemma}
\theoremstyle{definition}
\newtheorem{defn}{Definition}
\theoremstyle{remark}
\newtheorem{rem}{Remark}
\newtheorem{exe}{Example}
\newenvironment{proofof}{{\noindent {\it Proof of }}}{\hfill \finpr \\ }
\numberwithin{equation}{section}
\def\Supp{\mathop{\rm Supp}}
\newcommand{\cal}       {\mathcal}
\def\finpr{\hfill \hbox{
\vrule height 1.453ex  width 0.093ex  depth 0ex \vrule height
1.5ex  width 1.3ex  depth -1.407ex\kern-0.1ex \vrule height
1.453ex  width 0.093ex  depth 0ex\kern-1.35ex \vrule height
0.093ex  width 1.3ex  depth 0ex}}
\def\cb{{\Bbb C}}
\def\rb{{\Bbb R}}
          \def\L{\Lambda}         \def\l{\lambda}
\def\nb{\mathbb N}       \def\ds{\displaystyle}  
           \let\L=\longrightarrow  
                  \let\l=\rightarrow
                 \def\ds{\displaystyle}
\def\1{1\!\rm l}
\title[ ]{Lelong-Jensen formula, Demailly-Lelong numbers and weighted degree of positive supercurrents}
\author[ ]{Fredj Elkhadhra and Khalil Zahmoul}
\address{Department of Mathematics\\ Higher School of Sciences and Technology of Hammam Sousse\\ University of Sousse\\ Tunisia.}
\email{fredj.elkhadhra@essths.u-sousse.tn, zahmoul.khalil@essths.u-sousse.tn}
\begin{document}
\begin{abstract}  The goal of this work is to extend the concepts of  generalized Lelong number of positive currents investigated by Skoda, Demailly and Ghiloufi in complex analysis, to weakly positive supercurrents on the real superspaces. We generalize then a result of Lagerberg when the supercurrent is closed as well as a very recent result of Berndtsson for minimal supercurrents associated to submanifolds of $\rb^n$. The main tool is a variant of the well-known Lelong-Jensen formula in the superformalism case. Moreover, we extend to our setting various interesting theorems  in complex analysis such as Demailly and Rashkovskii comparison theorems. We also complete the work begun by Lagerberg on the degree of  positive closed supercurrents and we prove a removable singularities result for positive supercurrents.
\end{abstract} \maketitle
\section{Introduction}
In complex analysis, Lelong numbers of positive closed currents, as generalized by Demailly, have interesting applications in many domains such as complex analytic and algebraic theory, and number theory. Roughly speaking, Lelong numbers can be seen as a generalization of multiplicity of analytic set at a singular point, to positive closed currents. This concepts has been extended by many authors for an important class of currents by replacing the closedness property by a plurisubharmonicity one. The main tool for the existence of Lelong numbers is Lelong-Jensen formula which also becomes useful in studying the growth at infinity of positive currents. In \cite{5}, Lagerberg has introduced a notion of positive closed supercurrents on finite dimensional real vector spaces. By a strong connection with the complex setting, he succeded in proving the existence of Lelong numbers and he studied some others notions such as the direct image and the degree of  positive supercurrents. The important fact in this work appears when investigating the definition of the associated Monge-Amp\`ere operator for convex functions. Indeed, there is a good link between the class of strongly positive closed supercurrents and tropical geometry. Recently, Berndtsson \cite{11} obtained many interesting results in the superformalism setting by establishing a relation between weakly positive supercurrents and minimal submanifolds of $\rb^n$. Furthermore, he gave variants of
some well-known results in complex analysis concerning the class of currents, namely the famous theorem of El-Mir on the extension of positive closed current across complete pluripolar sets. Very recently, by using the concepts of $m$-positivity in the complex Hessian theory, \c{S}ahin \cite{13}  has introduced the notions of $m$-positivity in this superformalism.
 In this paper, we begin with a refinement on  the $m$-pluripotential study given in \cite{13} by getting a connection with the real Hessian theory investigated by Trudinger and Wang \cite{15}. Next, we introduce the class of convex weakly positive supercurrents in a similar way as the class of plurisubharmonic positive currents in complex analysis and we are mostly concerned with the behaviour of such supercurrents in a neighborhood of a point or at the infinity. After proving the existence of the Lelong number of weakly positive supercurrents in several cases, we prove many related properties. Namely, Demailly comparison theorem in the local situation and the comparison Rashkovskii theorem at the infinity. Moreover, we establish some effective bounds for the masses of the supercurrents and for the generalized degree with respect to convex weights. Besides the introduction, the paper has five sections. In Section 2 we give the necessary notations and preliminaries on the superformalism theory from Lagerberg \cite{5}. Section 3, is reserved for a discussion on the concepts of $m$-positivity and $m$-convexity as presented by \c{S}ahin \cite{13}. We also deal with the definition and the continuity of the corresponding $m$-superHessian operator. In Section 4, we will present a superformalism version of the well-known Lelong-Jensen formula in the complex setting. As an application, we prove the existence of Lelong numbers of weakly positive supercurrents in various cases. In Section 5, we prove a superformalism counterparts of the weighted degree of positive currents and the comparison theorems of Demailly \cite{3} and Rashkovskii \cite{6} in the complex theory. Finally, in Section 6 we investigate the extension of positive supercurrents in a strong way with the lines of Dabbek, Elkhadhra and El Mir \cite{31} in the complex setting and which generalize the result given by Berndtsson \cite{11} for minimal supercurrents.
 \section{Preliminaries}
 This part is a background on the superforms and supercurrents concepts introduced by \cite{5} that will be used throughout this paper. We adopt definitions and notations from \cite{5} and \cite{11}. Assume that $V$ and $W$ are two $n$-dimensional vector spaces over $\rb$, so that $x=(x_{1},...,x_{n})$ and $\xi=(\xi_{1},...,\xi_{n})$ are the corresponding coordinates. Let $J:V \rightarrow W$ be an isomorphism such that $J(x)=\xi$, and denote its inverse by $J$ as well, so that $J(\xi)=-x$, if $x\in V$ is the element for which $J(x)=\xi$. Setting $E=V \times W=\lbrace(x,\xi);\,x\in V,\,\xi\in W\rbrace$, and observe that the map $J$ extends over $E$ by means of $J(x,\xi)=(J(\xi),J(x))$, so that $J^2=-id$. Let $p,q$ be two integers such that $0\leqslant p,q\leqslant n$. A smooth superform on $E$ of bidegree $(p,q)$ is a form
$$\alpha=\sum_{K,L}\alpha_{KL}(x)dx_{K}\wedge d\xi_{L},$$
where $K=(k_{1},...,k_{p})$, $dx_{K}=dx_{k_{1}}\wedge ...\wedge dx_{k_{p}}$, $L=(l_{1},...,l_{q})$, $d\xi_{L}=d\xi_{l_{1}}\wedge ...\wedge d\xi_{l_{q}}$ and each map $(x,\xi)\mapsto \alpha_{KL}(x)$ is smooth and depends only on $x$. In particular, if $p=q$ we say that $\alpha$ is symmetric if and only if $\alpha_{KL}=\alpha_{LK}$  $\forall K,L$. In the remainder of this paper, we denote by ${\mathscr E}^{p,q}:={\mathscr E}^{p,q}(E)$, the set of smooth superforms on $E$ of bidegree $(p,q)$. It is clear that $J^{\ast}(dx_{i})=d\xi_{i}$ and ${J}^{\ast}(d\xi_{i})=-dx_{i}$. In order to simplify the notation, denote by $J$ the operator $J^\ast$, which can be extended on ${\mathscr E}^{p,q}$  as a map denoted again by $J :{\mathscr E}^{p,q}\rightarrow{\mathscr E}^{q,p}$, and defined by
$$J(\alpha)=J\left(\sum_{K,L} \alpha_{KL}(x)dx_{K}\wedge d\xi_{L}\right)=(-1)^q\sum_{K,L} \alpha_{KL}(x)d\xi_{K}\wedge dx_{L},\quad \forall\alpha\in{\mathscr E}^{p,q}.$$
It is clear that $J$ is an almost complex structure. Moreover, if $\alpha\in{\mathscr E}^{p,p}$, then, $\alpha$ is symmetric if and only if  $J(\alpha)=\alpha$. Similarly as in the complex setting, we introduce three notions of positivity on ${\mathscr E}^{p,p}$. Let us consider the K\"ahler form in $\rb^n\times\rb^n$ to be $\beta:=\frac{1}{2}dd^{\#}|x|^2=\sum_{i=1}^{n}dx_{i}\wedge d\xi_{i}\in{\mathscr E}^{1,1}$. It is not hard to see that $\beta^{n}=n!dx_{1}\wedge d\xi_{1}\wedge...\wedge dx_{n}\wedge d\xi_{n}$. According to \cite{5}, a superform $\varphi\in{\mathscr E}^{n,n}$ is said to be positive ($\varphi\geqslant 0$) if $\varphi=g\beta^{n}$, where $g$ is a positive function. Let $\varphi\in{\mathscr E}^{p,p}$ be  symmetric. We say that $\varphi$ is :
  \begin{enumerate}
 \rm\item  weakly positive if $\varphi\wedge \alpha_{1}\wedge J(\alpha_{1})\wedge ...\wedge \alpha_{n-p}\wedge J(\alpha_{n-p})\geqslant 0,\ \forall\alpha_{1},...,\alpha_{n-p}\in{\mathscr E}^{1,0}.$
 \rm\item  positive if $\varphi\wedge \sigma_{n-p}\alpha\wedge J(\alpha)\geqslant 0,\ \forall\alpha\in{\mathscr E}^{n-p,0};\ \sigma_{k}=(-1)^{\frac{k(k-1)}{2}}.$
 \rm\item  strongly positive if $\varphi=\sum_{s=1}^{N}\lambda_{s}\alpha_{1,s}\wedge J(\alpha_{1,s})\wedge...\wedge \alpha_{p,s}\wedge J(\alpha_{p,s});\ \lambda_{s}\geqslant 0,\  \alpha_{i,s}\in{\mathscr E}^{1,0}.$
  \end{enumerate}
 Assume that $\alpha\in{\mathscr E}^{n,n}$, then there exists a function $\alpha_0$ defined on $V$ such that  $$\alpha=\alpha_{0}dx_{1}\wedge ...\wedge dx_{n}\wedge d\xi_{1}\wedge ...\wedge d\xi_{n}.$$  According to \cite{5}, when an orientation on $V$ is chosen and $\alpha_0$ is integrable, the integral of $\alpha$ is defined by setting  $$\int_{E}\alpha=\int_{V}\alpha_{0}dx_{1}\wedge ...\wedge dx_{n}.$$
The operators $d$ and $d^{\#}$ are of type $(1,0)$ and $(0,1)$ respectively and acting on ${\mathscr E}^{p,q}$ by the following expressions $d=\sum_{i=1}^{n}\partial_{x_{i}}\wedge dx_{i}$ and $d^{\#}=\sum_{j=1}^{n}\partial_{x_{j}}\wedge d\xi_{j}$. Similarly with the complex setting, we see easily that  $d^{2}=(d^{\#})^{2}=0$ and $dd^{\#}=-d^{\#}d$. Moreover, in this situation we can present a Stokes formula as follow: Assume that $\Omega\subset V$ is a smooth open bounded subset and let $\alpha\in{\mathscr E}^{n-1,n}$. Then, $$\int_{\Omega\times W}d\alpha=\int_{\partial\Omega\times W}\alpha.$$
Denote by ${\mathscr D}^{p,q}:={\mathscr D}^{p,q}(E)=\lbrace\alpha\in{\mathscr E}^{p,q}$; $\alpha$ is compactly supported in E$\rbrace$ whose topology can be defined by means of the inductive limit. We introduce the space of supercurrents of  bidegree $(p,q)$ as the topological dual of ${\mathscr D}^{n-p,n-q}$, noted ${\mathscr D}_{p,q}$. This means that a supercurrent $T$ of bidegree $(p,q)$ is nothing but a continuous linear form on ${\mathscr D}^{n-p,n-q}$. More precisely, $T$ is a superform of bidegree $(p,q)$ which has distributions coefficients depending only on $x$. That is   $$T=\sum_{| I|=p,| J|=q}T_{IJ}dx_{I}\wedge d\xi_{J},$$ where $T_{IJ}$ are distributions defined uniquely. In particular, as with superforms if $p=q$ we say that the supercurrent $T$ is symmetric if and only if $T_{IJ}=T_{JI}$  $\forall I,J$.  For any $\alpha\in {\mathscr D}^{n-p,n-q}$ denote by $\langle T,\alpha\rangle$ the action of $T$ on $\alpha$. A supercurrent $T$ is said to be closed if $dT=0$ and is $d^{\#}$-closed if $d^{\#}T=0$. It is not hard to see that a symmetric supercurrent $T$ is closed if and only if  $T$ is $d^{\#}$-closed. Assume that $\rho$ is a smooth radial function which is supported in the unit ball and such that $\int\rho(x)dx=1$. For $\varepsilon>0$, let $\rho_{\varepsilon}(x)=\frac{1}{\varepsilon^{n}}\rho(\frac{x}{\varepsilon})$. Hence, the regularization of $T$ is defined by
$$T\ast\rho_{\varepsilon}=\sum_{| I|=p,| J|=q}(T_{IJ}\ast\rho_{\varepsilon})dx_{I}\wedge d\xi_{J},$$
it is clear that the family $\lbrace T\ast\rho_{\varepsilon}\rbrace_{\varepsilon}\subset{\mathscr E}^{p,q}$ is weakly convergent  to $T$ when $\varepsilon\rightarrow 0$. Assume that $T$ is symmetric and of bidegree $(p,p)$, in analogy to the concepts of positivity in the complex context, $T$ is said to be :
\begin{enumerate}
\item weakly positive if $\langle T,\alpha\rangle\geqslant 0$ for any $\alpha\in {\mathscr D}^{n-p,n-p}$ strongly positive.
\item positive if $\langle T,\sigma_{n-p}\alpha\wedge J(\alpha)\rangle\geqslant 0$ for any $\alpha\in {\mathscr D}^{n-p,0}$.
\item strongly positive if  $\langle T,\alpha\rangle\geqslant 0$ for any $\alpha\in {\mathscr D}^{n-p,n-p}$ weakly positive.
\end{enumerate}
For $K\Subset\rb^n$ and $T$ is a supercurrent of order zero, we define the mass measure of $T$ on $K$ by $\|T\|_K=\sum_{IJ}|T_{IJ}|(K)$, where $T_{IJ}$ are the coefficients of $T$. Based on Proposition $4.1$ in \cite{5}, the mass of a positive supercurrent $T$ of bidimension $(p,p)$ on $K$ is proportional to the positive measure $T\wedge\beta^p(K)$. According to \cite{5}, we have the following  $dd^{\#}$-Lemma : Assume that $T\in{\mathscr D}_{1,1}$ is weakly positive and closed then there exists a convex function $f:V\longrightarrow\mathbb{R}$ such that $T=dd^{\#}f$.
For the sake of simplicity, in the rest of this paper we consider two copies of $\rb^n$, i.e. $V=W=\rb^n$ and we say form instead of superform and current instead of supercurrent.
\section{$m$-positivity and $m$-superHessian operator}
\subsection{$m$-positivity}  Building on the work of Douib and Elkhadhra \cite{16} on the Hessian complex pluripotential theory, \c{S}ahin \cite{13} has recently introduced the following notions of $m$-positivity in the superformalism context :
\begin{enumerate}
\item A symmetric form $\alpha$ of bidegree $(1,1)$ is said $m$-positive if at every point we have
$\alpha^j\wedge\beta^{n-j}\geqslant 0,\ \forall j=1,...,m$.
\item A symmetric current $T$ of bidegree $(p,p)$; $p\leqslant m\leqslant n$, is called $m$-positive if we have $T\wedge\beta^{n-m}\wedge\alpha_1\wedge...\wedge\alpha_{m-p}\geqslant 0,$ for all $m$-positive forms $\alpha_1,...,\alpha_{m-p}$ of bidegree $(1,1)$.
\item A function $u: V\L\rb\cup\{-\infty\}$ is called $m$-convex if it is subharmonic and the current $dd^{\#}u$ is $m$-positive. Denote by ${\mathscr C}_m$ the set of $m$-convex functions.
\end{enumerate}
As remarked by \c{S}ahin \cite{13}, if $p$ is an integer such that $1\leqslant p\leqslant m$ and $\alpha_1,...,\alpha_p$ are $m$-positive forms of bidegree $(1,1)$ then $\beta^{n-m}\wedge\alpha_1\wedge...\wedge\alpha_p$ is positive. Moreover, observe that this notion of m-positivity coincides with the one given by \cite{5} in the border case $m=n$. This is not the
case if $m<n$. Indeed, a simple computation yields that in $\rb^3\times\rb^3$, the form $\alpha=dx_1\wedge d\xi_1 + dx_2\wedge d\xi_2-\frac{1}{2} dx_3\wedge d\xi_3$ of bidegree $(1,1)$ is $2$-positive but not weakly positive. On the other hand, it is clear that every strongly positive current is automatically  $m$-positive.
 Now let us recall some basic facts about $m$-convex functions due to \cite{13} :
\begin{pro}\label{1}\
\begin{enumerate}
\item If $u$ is of class ${\mathscr C}^2$ then $u$ is $m$-convex if and only if $dd^{\#}u$ is $m$-positive form.
\item convex functions=${\mathscr C}_n\subset{\mathscr C}_{n-1}\subset\cdots\subset{\mathscr C}_1$= subharmonic functions.
\item If $u$ is $m$-convex then the standard regularization $u_j=u\ast\chi_j$ is smooth and $m$-convex. Moreover, $(u_j)_j$ decreases pointwise to $u$.
\item Let $u,v\in{\mathscr C}_m$ then $\max(u,v)\in{\mathscr C}_m$.
\item If $(u_\alpha)_\alpha\subset{\mathscr C}_m$, $u=\sup_\alpha u_\alpha<+\infty$ and $u$ is upper semicontinuous then $u$ is $m$-convex.
\end{enumerate}
\end{pro}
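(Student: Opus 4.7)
My plan is to dispatch each item by returning to the definitions and invoking a small number of standard mechanisms ($m$-positivity theory, mollification, regularized maximum, and Choquet's lemma). Item (1) is essentially tautological once one observes that $m$-positivity of a $(1,1)$-form $\alpha=dd^{\#}u$ already enforces $\alpha\wedge\beta^{n-1}\geqslant 0$ (the case $j=1$ in the definition of an $m$-positive form), which in the smooth setting is precisely $\Delta u\geqslant 0$; hence subharmonicity is automatic and the two halves of the definition collapse. For (2), the inclusion $\mathscr{C}_{m+1}\subset\mathscr{C}_m$ at the level of $(1,1)$-forms follows directly, because the inequalities $\alpha^{j}\wedge\beta^{n-j}\geqslant 0$ required for $(m+1)$-positivity include those required for $m$-positivity; the corresponding statement for currents is the standard G{\aa}rding-type argument (see \cite{15}), using that $\beta$ is itself $m$-positive for every $m$, so one can slide an extra $\beta$ factor between the test class and the $\beta^{n-m}$ slot. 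The two endpoints coincide with the known classes: $\mathscr{C}_{1}$ is the subharmonic class by definition, and $\mathscr{C}_{n}$ is the convex class through the identification $n$-positive $=$ Lagerberg positive (pointed out in the text) combined with the $dd^{\#}$-Lemma recalled in Section~2.

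For (3), I would take a positive radial mollifier $\chi_j$ and set $u_j=u\ast\chi_j$. Smoothness is automatic and the monotone decrease $u_j\downarrow u$ is the classical mean-value property of mollifying a subharmonic function against a radial kernel. To check $m$-convexity of $u_j$, use the commutation $dd^{\#}u_j=(dd^{\#}u)\ast\chi_j$, test against arbitrary $m$-positive $(1,1)$-forms, and apply Fubini; the nonnegativity is reduced to the original $m$-positivity of $dd^{\#}u$ integrated against the positive weight $\chi_j$.

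For (4), I would exploit a regularized maximum such as $M_{\varepsilon}(s,t)=\tfrac{s+t}{2}+\sqrt{\bigl(\tfrac{s-t}{2}\bigr)^{2}+\varepsilon^{2}}$, which is smooth, jointly convex, componentwise nondecreasing, and satisfies $M_{\varepsilon}(s,t)\downarrow\max(s,t)$ as $\varepsilon\to 0$. After regularizing $u$ and $v$ via (3) to assume they are smooth, the chain rule gives
$$dd^{\#}M_{\varepsilon}(u,v)=(\partial_{1}M_{\varepsilon})\,dd^{\#}u+(\partial_{2}M_{\varepsilon})\,dd^{\#}v+Q_{\varepsilon},$$
where the nonnegative scalars $\partial_{i}M_{\varepsilon}$ multiply $m$-positive forms, and $Q_{\varepsilon}$ is a quadratic combination of $du\wedge d^{\#}u$, $du\wedge d^{\#}v+dv\wedge d^{\#}u$, $dv\wedge d^{\#}v$ whose coefficient matrix is the Hessian of $M_{\varepsilon}$; convexity of $M_{\varepsilon}$ makes $Q_{\varepsilon}$ strongly positive, hence $m$-positive. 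Thus $M_{\varepsilon}(u,v)\in\mathscr{C}_{m}$, and letting $\varepsilon\to 0$, together with the weak closedness of the cone of $m$-positive currents, gives (4). For (5), Choquet's lemma produces a countable subfamily with the same upper semicontinuous envelope; the finite maxima $v_{N}=\max(u_{\alpha_{1}},\dots,u_{\alpha_{N}})$ are $m$-convex by iterating (4) and increase to $u$, so $dd^{\#}v_{N}\to dd^{\#}u$ weakly and the $m$-positivity passes to the limit.

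The main technical obstacle lies in (2) and in the limiting steps of (4)--(5): everything relies on the stability of $m$-positivity under weak convergence of currents, and specifically (2) requires converting between the two different test classes that define $(m+1)$- and $m$-positivity for currents, which is not literal and needs the G{\aa}rding-type polarization from the $m$-Hessian theory of \cite{15,16}. The remaining items are then routine combinations of the chain rule, mollification, and Choquet's lemma.
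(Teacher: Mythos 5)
The paper itself gives no proof of this proposition: it is explicitly presented as a list of facts \emph{recalled} from \c{S}ahin's work \cite{13}, so there is no argument in the text to compare yours against line by line. Judged on its own terms, your plan for items (2)--(5) is sound and uses the expected toolkit: the nested pointwise cone conditions for (2), radial mollification and the sub-mean-value property for (3), the regularized maximum $M_{\varepsilon}$ with the chain rule $dd^{\#}M_{\varepsilon}(u,v)=\sum_i\partial_iM_{\varepsilon}\,dd^{\#}w_i+\sum_{i,j}\partial_i\partial_jM_{\varepsilon}\,dw_i\wedge d^{\#}w_j$ for (4) (the quadratic term is indeed strongly positive, since $dw\wedge d^{\#}w=dw\wedge J(dw)$ and the Hessian of $M_{\varepsilon}$ is positive semidefinite), and Choquet's lemma plus weak closedness of the $m$-positive cone for (5).

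The one genuine gap is your treatment of item (1), which you dismiss as ``essentially tautological.'' It is not. The definition of $m$-convexity asks that $dd^{\#}u$ be $m$-positive \emph{as a current}, i.e.\ that $dd^{\#}u\wedge\beta^{n-m}\wedge\alpha_1\wedge\dots\wedge\alpha_{m-1}\geqslant 0$ for all $m$-positive $(1,1)$-forms $\alpha_i$, whereas statement (1) asserts equivalence with $dd^{\#}u$ being $m$-positive \emph{as a form}, i.e.\ the pointwise conditions $(dd^{\#}u)^j\wedge\beta^{n-j}\geqslant 0$ for $j=1,\dots,m$. Your observation that the $j=1$ case recovers $\Delta u\geqslant 0$ only shows that the subharmonicity hypothesis is redundant in the smooth case; it does not address either implication between the two positivity notions. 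The implication ``form-positive $\Rightarrow$ current-positive'' is exactly the G{\aa}rding-type inequality the paper quotes right after the definitions, and the converse ``current-positive $\Rightarrow$ form-positive'' requires a separate duality argument within the G{\aa}rding cone (for instance, one cannot simply substitute $\alpha_i=\alpha+t\beta$ and let $t$ vary, since the resulting inequality $\alpha^2\wedge\beta^{n-2}+t\,\alpha\wedge\beta^{n-1}\geqslant 0$ does not isolate the $j=2$ condition). Since you already invoke this polarization machinery for item (2), the fix is simply to route item (1) through the same lemma rather than declaring it trivial; as written, the central analytic content of (1) is unproved.
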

As an immediate consequence of the first statement, we see that if $u$ is of class ${\mathscr C}^2$ then $u$ is $m$-convex if and only if, $(dd^{\#}u)^k\wedge \beta^{n-k}$ is positive for $k=1,\cdots , m$.  However, it is not difficult to show that
\begin{equation}
(dd^{\#}u)^k\wedge\beta^{n-k}=\frac{(n-k)!}{n!} F_k[u]\beta^n,
\end{equation}
where $F_k[u]=\left[D^2u\right]_k$, is the well-known {\it $k$-Hessian operator} which was studied extensively by Trundinger and Wang \cite{15} and $\left[D^2u\right]_k$ denotes the sum of its $k\times k$ principal minors of the Hessian matrix of $u$. Consequently, $u$  is $m$-convex is equivalent to saying that  $F_k[u]\geqslant 0$ for $k=1,\cdots, m$. This coincides with the definition of $u$ to be $m$-convex in the sense of  Trundinger and Wang \cite{15}. It is a clear raison why we use the terminology of $m$-convex instead of $m$-subharmonic used by \c{S}ahin \cite{13}. In what follows, we give an important example of a well-known $m$-convex function which is fundamental in the real Hessian theory (see \cite{15} and \cite{17}) and will be used later in Theorem \ref{t6}.
\begin{exe}\label{x1} Setting $\varphi_m(x)=-\frac{1}{\left(\frac{n}{m}-2\right)|x|^{\frac{n}{m}-2}}$ if $m\neq\frac{n}{2}$ and $\log|x|$ otherwise, $\gamma=\frac{1}{2}d|x|^2$ and $\gamma^{\#}=\frac{1}{2}d^{\#}|x|^2$. Then, for $x\not=0$ and $m\neq\frac{n}{2}$, we have
$$\begin{array}{lcl}
dd^{\#}\varphi_m(x)=\ds\sum_{i,j=1}^{n}\frac{\partial^2\varphi_m(x)}{\partial x_i\partial x_j}dx_i\wedge d\xi_j&=&\ds \sum_{i,j=1}^{n}\frac{\partial (x_j|x|^{-\frac{n}{m}})}{\partial x_i}dx_i\wedge d\xi_j\\&=&\ds \left[\sum_{i=1}^{n}|x|^{-\frac{n}{m}}dx_i\wedge d\xi_i-\frac{n}{m}\sum_{i,j=1}^{n}x_ix_j|x|^{-\frac{n}{m}-2}dx_i\wedge d\xi_j\right]\\&=&\ds|x|^{-\frac{n}{m}}\left(\beta-\frac{n}{m}|x|^{-2}\gamma\wedge\gamma^{\#}\right).
\end{array}$$
It is clear that $(\gamma\wedge\gamma^{\#})^2=0$, therefore, for $s=1,...,m$, we have
$$\begin{array}{lcl}
(dd^{\#}\varphi_m(x))^s\wedge\beta^{n-s}&=&\ds|x|^{-\frac{ns}{m}}\left[\beta^{s}-\frac{ns}{m}|x|^{-2}\beta^{s-1}\wedge\gamma\wedge\gamma^{\#}\right]\wedge\beta^{n-s}
\\ &=&\ds|x|^{-\frac{ns}{m}}\left[\beta^n-\frac{ns}{m}|x|^{-2}\gamma\wedge\gamma^{\#}\wedge\beta^{n-1}\right]
\\&=&\ds|x|^{-\frac{ns}{m}}\left[\beta^n-\frac{s}{m}\beta^{n}\right]=\left(1-\frac{s}{m}\right)|x|^{-\frac{ns}{m}}\beta^{n}.
\end{array}$$
Now, for $x\neq0$ and $m=\frac{n}{2}$, a straightforward computation gives
$$dd^{\#}\varphi_m(x)=\ds\sum_{i,j=1}^{n}\frac{\partial^2(\log|x|)}{\partial x_i\partial x_j}dx_i\wedge d\xi_j=|x|^{-2}\left(\beta-2|x|^{-2}\gamma\wedge\gamma^{\#}\right).
$$
Thus, for $s=1,...,\dfrac{n}{2}$, we obtain
$$(dd^{\#}\varphi_m(x))^s\wedge\beta^{n-s}=\left(1-\frac{s}{m}\right)|x|^{-\frac{ns}{m}}\beta^n.$$
This leads to the conclusion that $\varphi_m$ is $m$-convex in both cases $m=\frac{n}{2}$ and $m\neq\frac{n}{2}$.
\end{exe}
\subsection{$m$-superHessian operator} Similarly as in the theory of complex Hessian operator, our purpose here is to define the wedge product $T\wedge\beta^{n-m}\wedge dd^{\#}u$, where $u$ and $T$ are not necessarily smooth. Let $T$ be a closed $m$-positive current of bidimension $(p,p)$; $m+p\geqslant n$ and  let $u$ be a locally bounded $m$-convex function. Since $T\wedge\beta^{n-m}$ is weakly positive and $u$ is locally bounded, then by \cite{5} the current $uT\wedge\beta^{n-m}$ has measure coefficients. Hence, we set $$T\wedge\beta^{n-m}\wedge dd^{\#}u=dd^{\#}(uT\wedge\beta^{n-m}).$$
Moreover, this current is weakly positive and closed. Indeed, the result is clear when $u$ is smooth. Otherwise, we consider a family of smooth regularized kernels $(\rho_{\varepsilon})_{\varepsilon>0}$. Therefore, $u_\varepsilon=u*\rho_{\varepsilon}$ is smooth and $m$-convex and the sequence of currents $u_\varepsilon T\wedge\beta^{n-m}$ converges weakly to $uT\wedge\beta^{n-m}$. By the continuity of $dd^{\#}$, we deduce that  $dd^{\#}(u_\varepsilon T\wedge\beta^{n-m})$ converges to  $dd^{\#}(uT\wedge\beta^{n-m})$ as currents. So, the positivity of $T\wedge\beta^{n-m}\wedge dd^{\#}u$, is a consequence of the one of $T\wedge\beta^{n-m}\wedge dd^{\#}u_\varepsilon$. More generally, if we assume that  $u_1,...,u_q$ are $m$-convex locally bounded functions on $\rb^n$; $q\leqslant p+m-n$, we can define by induction the following weakly positive closed current of bidimension $(p+m-n-q,p+m-n-q)$ :
$$T\wedge\beta^{n-m}\wedge dd^{\#}u_1\wedge dd^{\#}u_2\wedge...\wedge dd^{\#}u_q=dd^{\#}(u_1T\wedge\beta^{n-m}\wedge dd^{\#}u_2\wedge...\wedge dd^{\#}u_q).$$
It should be noted here that when $m=n$, such definition justified in \cite{5} as the unique adherent point of a family $ (T\wedge dd^{\#}u_{1}^{j}\wedge...\wedge dd^{\#}u_{q}^{j})_j$ which is locally uniformly bounded in masses, where the functions $u_{k}^{j}$ are smooth and convex and converges locally uniformly to $u_{k}$. Recently, when $m<n$, the same inductively definition was presented by \c{S}ahin \cite{13} in the particular cases either the $m$-convex functions $u_j$ are continuous or the functions are locally bounded and $T$ is a tropical variety of co-dimension $n-p$. This means that $$T=V_{f_1}\wedge\cdots\wedge V_{f_{n-p}}=dd^{\#}f_1\wedge\cdots\wedge dd^{\#}f_{n-p},$$  where $f_j$ are tropical polynomials and $V_{f_j}$ are the corresponding tropical hypersurfaces. By using a techniques which goes back to Demailly in the complex theory, we obtain the following proposition which improves a result of \cite{5} in the particular cases $m=n,\ T_k=T$ and $u_j^k$ are smooth and convex as well as a very recent result of \cite{13}, when $T_k=T$ and $u_j^k$ is the usual regularization of $u_j$.
\begin{pro}\label{2} Assume that $u_{1}^{k},...,u_{q}^{k}$ are sequences of $m$-convex functions which converge locally uniformly respectively to continuous $m$-convex functions $u_{1},...,u_{q}$. Assume that  $T_{k},T$ are $m$-positive closed currents of bidimension $(p,p)$; $m+p>n$, such that $T_k\wedge\beta^{n-m}$  converges weakly to $T\wedge\beta^{n-m}$. Then, in the sense of currents, we have :
\begin{enumerate}
\item $u_{1}^{k}T_k\wedge\beta^{n-m}\wedge dd^{\#}u_{2}^{k}\wedge...\wedge dd^{\#}u_{q}^{k}\ \longrightarrow\ u_{1}T\wedge\beta^{n-m}\wedge dd^{\#}u_{2}\wedge...\wedge dd^{\#}u_{q}$.
\item $T_{k}\wedge\beta^{n-m}\wedge dd^{\#}u_{1}^{k}\wedge dd^{\#}u_{2}^{k}\wedge...\wedge dd^{\#}u_{q}^{k}\ \longrightarrow\ T\wedge\beta^{n-m}\wedge dd^{\#}u_{1}\wedge dd^{\#}u_{2}\wedge...\wedge dd^{\#}u_{q}$.
\end{enumerate}
\end{pro}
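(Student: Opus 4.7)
The plan is to run a double induction on $q$, proving statements (1) and (2) simultaneously at each level. The base case $q=1$ of statement (1) is the standard fact that if $u_1^k\to u_1$ locally uniformly with $u_1$ continuous and $T_k\wedge\beta^{n-m}$ converges weakly to $T\wedge\beta^{n-m}$ as weakly positive currents, then $u_1^kT_k\wedge\beta^{n-m}\to u_1T\wedge\beta^{n-m}$; this follows coefficient-wise from the corresponding property for weakly convergent positive Radon measures together with the uniform estimate $|u_1^k-u_1|\to 0$ on compacta and the local mass bound that weakly convergent positive currents automatically enjoy. The base case of statement (2) is then immediate by applying $dd^{\#}$ (which is continuous on currents) to both sides of statement (1) and invoking the definition of the iterated wedge product from the paragraph preceding the proposition.

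For the inductive step, assume (1) and (2) hold with $q-1$ factors. To prove (1) at level $q$, set
\[
S_k=T_k\wedge\beta^{n-m}\wedge dd^{\#}u_2^k\wedge\cdots\wedge dd^{\#}u_q^k,\qquad S=T\wedge\beta^{n-m}\wedge dd^{\#}u_2\wedge\cdots\wedge dd^{\#}u_q,
\]
and split
\[
u_1^kS_k-u_1S=(u_1^k-u_1)S_k+u_1(S_k-S).
\]
By the inductive hypothesis (2) applied to the functions $u_2,\ldots,u_q$, the sequence $S_k$ converges weakly to $S$; since both are weakly positive closed currents by the construction recalled just before the proposition, multiplication by the continuous function $u_1$ preserves this convergence, so the second term tends to $0$ as currents. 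For the first term, weak convergence of the positive currents $S_k\wedge\beta^{p+m-n-q+1}$ (or equivalently of their coefficient measures) gives a uniform local mass bound $\sup_k\|S_k\|_K<\infty$ for every compact $K$; combined with $\sup_K|u_1^k-u_1|\to 0$, this forces $(u_1^k-u_1)S_k\to 0$ weakly. This establishes (1) at level $q$. Then (2) at level $q$ follows by applying $dd^{\#}$ to (1) at level $q$, using the definition
\[
S_k\wedge dd^{\#}u_1^k=dd^{\#}(u_1^kS_k),\qquad S\wedge dd^{\#}u_1=dd^{\#}(u_1S),
\]
and the continuity of $dd^{\#}$.

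The main obstacle I anticipate is the rigorous justification that multiplication of a weakly convergent sequence of weakly positive currents $S_k\to S$ by the continuous function $u_1$ is continuous for the weak topology. Weakly positive currents in this real superformalism have coefficients that are signed but controlled by the total mass measure $S_k\wedge\beta^{p+m-n-q+1}$ (up to a combinatorial constant), which is a positive Radon measure; hence the standard fact that weak-$*$ convergence of positive Radon measures is preserved by multiplication by continuous functions (proved by truncating and approximating $u_1\varphi$ by smooth test forms uniformly on the supports) applies coefficient by coefficient. The other minor point to check is that the intermediate currents arising in the induction lie within the regime where the wedge product from the preceding paragraph is well-defined, which is guaranteed by the weak positivity and closedness propagating through each $dd^{\#}$ step and by the dimension inequality $m+p>n$.
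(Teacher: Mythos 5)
Your proposal is correct and follows essentially the same route as the paper: induction on $q$, with statement (2) at each level obtained from statement (1) by the weak continuity of $dd^{\#}$, the term $(u_1^k-u_1)S_k$ killed by the uniform local mass bound for the weakly convergent weakly positive currents (Proposition 4.1 of Lagerberg) together with locally uniform convergence, and the term $u_1(S_k-S)$ handled by smooth approximation. The only cosmetic difference is that the paper inserts an explicit regularization $u_\varepsilon=u*\rho_\varepsilon$ of the limit function, yielding a four-term splitting, whereas you approximate on the test-form side; the two devices are equivalent given the continuity of $u_1$ and the mass bounds.
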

\begin{proof} Thanks to the weak continuity of $dd^{\#}$, it is clear that (2) is a direct consequence of (1), then it suffices to prove (1). We proceed by induction on $q$. If $q=1$, let $u_{k}$ be a sequence of $m$-convex functions which converges uniformly on each compact subset to a continuous $m$-convex function $u$. Firstly, we consider a smooth regularization $u_{\varepsilon}=u\ast\rho_{\varepsilon}$ of $u$, and for simplicity of the proof setting $R=T\wedge\beta^{n-m}$ and $R_k=T_k\wedge\beta^{n-m}$. Then we have :
 $$u_{k}R_{k}-uR=(u_{k}-u)R_{k}+(u-u_{\varepsilon})R_{k}+u_{\varepsilon}(R_{k}-R)+(u_\varepsilon-u)R,\quad\forall\varepsilon>0.$$
Since $R_{k}$ is weakly positive and converges weakly to the weakly positive current $R$, then by Proposition $4.1$ in \cite{5} the currents $R_k,R$ are locally uniformly bounded in masses. Hence, $$\|(u_{k}-u)R_{k}\|_{K}\leqslant\|u_{k}-u\|_{L^{\infty}(K)}\|R_{k}\|_{K},\qquad\forall K\Subset\rb^n.$$ It follows that
$(u_{k}-u)R_{k}$ converges to $0$ when $k\rightarrow+\infty$. The same argument gives that  $(u-u_{\varepsilon})R_{k}$ and $(u_{\varepsilon}-u)R$ converge to $0$ when $\varepsilon\rightarrow0$. Since $u_{\varepsilon}$ is smooth, we have $u_{\varepsilon}(R_{k}-R)$ converges to $0$ when $k\rightarrow+\infty$. Consequently, we have proved that $u_{k}R_k$ converges weakly to $uR$. Now assume that $q\geqslant1$ and suppose that the property (1) is satisfied for $q$, and we are going to prove it for $q+1$. Let $u_{q+1}^k$ be a sequence of $m$-convex functions which converges locally uniformly to a continuous $m$-convex function $u_{q+1}$. We have $u_{1}^{k}R_k\wedge dd^{\#}u_{2}^{k}\wedge...\wedge dd^{\#}u_{q}^{k}$ is a sequence of currents of bidimension $(p-q-n+m+1,p-q-n+m+1)$ which converges as currents to $u_{1}R\wedge dd^{\#}u_{2}\wedge...\wedge dd^{\#}u_{q}$. Then, by the weak continuity of $dd^{\#}$, $R_k\wedge dd^{\#}u_{1}^{k}\wedge...\wedge dd^{\#}u_{q}^{k}$ is a sequence of weakly positive closed currents of bidimension $(p-q+n-m,p-q+n-m)$ which converges weakly to $R\wedge dd^{\#}u_{1}\wedge...\wedge dd^{\#}u_{q}$. Hence, $u_{q+1}^{k}R_k\wedge dd^{\#}u_{1}^{k}\wedge...\wedge dd^{\#}u_{q}^{k}$  converges  as currents to $u_{q+1}R\wedge dd^{\#}u_{1}\wedge...\wedge dd^{\#}u_{q}$.\end{proof}
\begin{rem} Before closing this section we state the following comments :
\begin{enumerate}
\item Concerning the potential theoretic aspects in the superformalism setting, let us recall that to each $m$-positive closed current $T$ of bidegree $(p,p)$ on an open subset $\Omega\Subset\rb^n$, we can associate a capacity in a similar way to  the capacity defined recently by \c{S}ahin \cite{13} and the one investigated by Dhouib and Elkhadhra \cite{16} in the complex Hessian theory. More precisely, if $K\subset\Omega$ is compact, we define the $m$-capacity of $K$ relative to $T$ by :
$$cap_{m,T}(K):=\sup\left\{ \int_{K\times\rb^n} T\wedge\beta^{n-m}\wedge (dd^{\#} u)^{m-p},~ u\in {\mathscr C}_m(\Omega), ~0\leqslant u\leqslant 1\right\},$$
and for every subset $E\subset\Omega$, $cap_{m,T}(E)=\sup\left\{cap_{m,T}(K),\ K\ {\rm compact\ in}\  E\right\}$.
When $T$ is a tropical variety, we recover the capacity of \c{S}ahin \cite{13}. Also, $cap_{m,T}$ can be viewed as a counterpart of the capacity introduced by \cite{16} in the complex Hessian theory. By going back to the comment before Example \ref{x1}, especially for the trivial current $T=1$, we get the so-called $m$-Hessian capacity defined by Trudinger and Wang \cite{15}. Such capacity shares the same properties as the preceding capacities. Furthermore, by an adaptation of the study given by \cite{16} in the complex Hessian theory, we can prove the quasicontinuity of  each locally bounded $m$-convex function with respect to $cap_{m,T}$.  This crucial property leads to relaxing the continuity condition of the functions $u_j$, this means that Proposition \ref{2} is still holds when the functions are locally bounded and $T_k=T$ (see the proof of Theorem 4.1 in \cite{13}). We leave the reader to consider by himself this more general situation.
\item In light of the above discussion, it is clear that a current of the form $dd^{\#}u_1\wedge...\wedge dd^{\#}u_k$ is $m$-positive, for $u_1,...,u_k$ locally bounded $m$-convex functions and $k\leqslant m$. Hence, since ${\mathscr C}_m\subset{\mathscr C}_{m-1}$, $dd^{\#}u_1\wedge...\wedge dd^{\#}u_k$ is again $(m-1)$-positive when $k\leqslant m-1$. However, as shown by the example stated before Proposition \ref{1}, we easily see that in general there is no link between $m$-positive and $(m-1)$-positive currents.
\end{enumerate}
\end{rem}
\section{Lelong-Jensen formula and Demailly-Lelong numbers}
 Analogously with the complex theory of positive currents, our goal in this section is to prove the existence of Lelong numbers of weakly positive currents in the superformalism setting. To do this, we let ourselves be inspired by the complex setting. Indeed, we follow the method of Lelong in the closed case, which has been generalized by Demailly \cite{2} and Skoda \cite{7} for the plurisubharmonic case and recently by Benali and Ghiloufi \cite{14} in the complex Hessian theory.
\subsection{Lelong-Jensen formula.} By following the proofs of Demailly \cite{2} and Skoda \cite{7} we are going to prove the corresponding version of Lelong-Jensen formula in our situation. Assume that $\varphi$ is a positive ${\mathscr C}^2$ function on $\rb^n$. For all  real numbers $r>0$ and $r_2>r_1>0$, setting :
$$B(r)=\{ x\in \rb^n;\ \varphi(x)<r\},\qquad S(r)=\{x\in \rb^n;\ \varphi(x)=r\}$$
$${\rm and}\quad B(r_1,r_2)=\{ x\in \rb^n;\ r_1<\varphi(x)<r_2\}.$$
Denote also by : $$\alpha=dd^{\#}\varphi^\frac{1}{2}\quad{\rm on\,the\ open\ set}\,\ \{\varphi>0\}\qquad{\rm and}\qquad\omega=dd^{\#}\varphi.$$
A direct computation gives :
\begin{equation}\label{e1}
\alpha=\frac{\omega}{2\varphi^{\frac{1}{2}}}-\frac{d\varphi\wedge d^{\#}\varphi}{4\varphi^{\frac{3}{2}}},\qquad\alpha^p=\frac{\omega^p}{2^p\varphi^{\frac{p}{2}}}-p\frac{\omega^{p-1}\wedge d\varphi\wedge d^{\#}\varphi}{2^{p+1}\varphi^{\frac{p+2}{2}}}.
\end{equation}
With these notations, we prove :
\begin{pro}\label{3} Assume that $T$ is a current of bidimension $(p,p)$ on $\rb^n\times\rb^n$, such that $T$ and $dd^{\#}T$ are symmetrical and have measure coefficients. For every $r_2>r_1>0$, we have :
$$\begin{array}{lcl}
\ds{1\over {2^p{r_2}^{\frac{p}{2}}}}\ds\int_{B(r_2)\times\rb^n}T\wedge\omega^p&-&\ds{1\over {2^p{r_1}^{\frac{p}{2}}}}\ds\int_{B(r_1)\times\rb^n}T\wedge\omega^p=\ds\int_{ B(r_1,r_2)\times\rb^n}T\wedge\alpha^p\\&+&\ds\left(\ds{1\over {2^p{r_1}^{\frac{p}{2}}}}-\ds{1\over {2^p{r_2}^{\frac{p}{2}}}}\right)\ds\int_0^{r_1}dt\ds\int_{B(t)\times\rb^n}dd^{\#} T\wedge\omega^{p-1}\\&+&\ds\int_{r_1}^{r_2}\left(\ds{1\over {2^p{t}^{\frac{p}{2}}}}-\ds{1\over {2^p{r_2}^{\frac{p}{2}}}}\right)dt\int_{B(t)\times\rb^n}dd^{\#}T\wedge\omega^{p-1}.
 \end{array}$$
\end{pro}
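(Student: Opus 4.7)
The plan is to chain three super-Stokes applications driven by a simpler form of (\ref{e1}). By convolving with the mollifier $\rho_\varepsilon$ I would first reduce to the case of smooth $T$ (both sides depend weakly continuously on $T$ and $dd^{\#}T$), and take $\varphi>0$ after shifting $\varphi\mapsto\varphi+\delta$ if necessary. A direct Leibniz calculation gives $d(\varphi^{-p/2}d^{\#}\varphi)=-\tfrac{p}{2}\varphi^{-(p+2)/2}d\varphi\wedge d^{\#}\varphi+\varphi^{-p/2}\omega$, and substituting this into (\ref{e1}) the two $\omega^p/\varphi^{p/2}$ terms cancel, yielding the clean identity $\alpha^p=\tfrac{1}{2^p}\omega^{p-1}\wedge d(\varphi^{-p/2}d^{\#}\varphi)$. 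Using $d\omega=0$ and the even total degree of $T\wedge\omega^{p-1}$, Leibniz again produces
$$T\wedge\alpha^p=\tfrac{1}{2^p}\bigl[d\bigl(T\wedge\omega^{p-1}\wedge\varphi^{-p/2}d^{\#}\varphi\bigr)-dT\wedge\omega^{p-1}\wedge\varphi^{-p/2}d^{\#}\varphi\bigr].$$

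The first super-Stokes on $B(r_1,r_2)\times\rb^n$ turns the exact piece into sphere integrals $r_i^{-p/2}\int_{S(r_i)\times\rb^n}T\wedge\omega^{p-1}\wedge d^{\#}\varphi$, each of which a second super-Stokes on $B(t)\times\rb^n$ rewrites as $\mu(t)+Q(t)$, using $d(T\wedge\omega^{p-1}\wedge d^{\#}\varphi)=dT\wedge\omega^{p-1}\wedge d^{\#}\varphi+T\wedge\omega^p$; here $\mu(t):=\int_{B(t)\times\rb^n}T\wedge\omega^p$ and $Q(t):=\int_{B(t)\times\rb^n}dT\wedge\omega^{p-1}\wedge d^{\#}\varphi$. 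The remaining volume integral $\int_{B(r_1,r_2)\times\rb^n}\varphi^{-p/2}dT\wedge\omega^{p-1}\wedge d^{\#}\varphi$ is then reorganized by co-area and one integration by parts in $t$; its boundary contributions cancel against the $Q(r_i)$ pieces above, leaving the intermediate identity
$$\tfrac{1}{2^p r_2^{p/2}}\mu(r_2)-\tfrac{1}{2^p r_1^{p/2}}\mu(r_1)=\int_{B(r_1,r_2)\times\rb^n}T\wedge\alpha^p+\tfrac{p}{2^{p+1}}\int_{r_1}^{r_2}\tfrac{Q(t)}{t^{p/2+1}}\,dt.$$

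The crucial final step identifies $Q(t)$ with $\Lambda(t):=\int_0^t\lambda(s)\,ds$, where $\lambda(s):=\int_{B(s)\times\rb^n}dd^{\#}T\wedge\omega^{p-1}$. Setting $\eta:=(t-\varphi)\,dT\wedge\omega^{p-1}\in\mathscr{E}^{n,n-1}$, the Leibniz rule for $d^{\#}$ combined with $d^{\#}\omega^{p-1}=0$ and $d^{\#}d=-dd^{\#}$ gives, after moving $d^{\#}\varphi$ past the odd-degree form $dT\wedge\omega^{p-1}$, the relation
$$dT\wedge\omega^{p-1}\wedge d^{\#}\varphi=d^{\#}\eta+(t-\varphi)\,dd^{\#}T\wedge\omega^{p-1}.$$
The symmetry of $T$ yields $d^{\#}T=J(dT)$, and combined with $d^{\#}=-JdJ$ (which follows from $Jd=d^{\#}J$ and $J^{-1}=-J$) together with $J|_{\mathscr{E}^{n,n}}=\mathrm{id}$ one deduces $d^{\#}\eta=-d(J\eta)$; super-Stokes then gives $\int_{B(t)\times\rb^n}d^{\#}\eta=-\int_{S(t)\times\rb^n}J\eta=0$, since $\eta$ carries the factor $(t-\varphi)$ which vanishes on $S(t)$. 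Hence $Q(t)=\Lambda(t)$, and one last integration by parts in $t$ (using $\lambda=\Lambda'$) converts $\tfrac{p}{2^{p+1}}\int_{r_1}^{r_2}\Lambda(t)/t^{p/2+1}\,dt$ into the two $dt$-pieces of the statement. The main difficulty is this symmetry argument producing $dd^{\#}T$; once it is in place, everything else reduces to routine bookkeeping in $t$ and careful sign management in the supercalculus.
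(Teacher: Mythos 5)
Your argument is correct, and it reaches the formula by a genuinely different route than the paper's, even though the overall skeleton (Stokes on the annulus, conversion of sphere integrals to ball integrals, reorganization of the $t$-integrals) is the same. Two substitutions are worth flagging. First, instead of writing $T\wedge\alpha^p=d(T\wedge d^{\#}\varphi^{\frac12}\wedge\alpha^{p-1})+\cdots$ and then passing back and forth between $\alpha$ and $\omega$ on the level sets via the pullback identities $j_t^*\alpha^k=j_t^*\omega^k/(2^kt^{k/2})$, you use the single closed-form identity $\alpha^p=\frac{1}{2^p}\,\omega^{p-1}\wedge d(\varphi^{-p/2}d^{\#}\varphi)$, which is indeed equivalent to (\ref{e1}); this keeps all the bookkeeping in terms of $\omega$ and is a mild simplification. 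Second, and more substantially, the paper's mechanism for making $dd^{\#}T$ appear is Lemma \ref{l1} (the swap $d\psi\wedge d^{\#}\gamma=-d^{\#}\psi\wedge d\gamma$, valid for \emph{symmetric} $\gamma$ of bidegree $(n-1,n-1)$), applied twice with $\gamma=T\wedge\alpha^{p-1}$ and $\gamma=T\wedge\omega^{p-1}$, followed by Fubini and a $d$-Stokes. You replace this by the primitive $\eta=(t-\varphi)\,dT\wedge\omega^{p-1}\in{\mathscr E}^{n,n-1}$, the operator identity $d^{\#}\eta=-d(J\eta)$ together with the paper's Stokes formula for the $(n-1,n)$-form $J\eta$ (whose boundary term dies because of the factor $t-\varphi$), and the layer-cake identity $\int_{B(t)}(t-\varphi)\,dd^{\#}T\wedge\omega^{p-1}=\int_0^t\lambda(s)\,ds$; I checked that $Q(t)=\Lambda(t)$ and the final Stieltjes integration by parts reproduce exactly the two $dt$-terms of the statement. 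Note that your appeal to the symmetry of $T$ via ``$d^{\#}T=J(dT)$'' is actually superfluous: $d^{\#}\eta=-d(J\eta)$ is a universal identity on ${\mathscr E}^{n,n-1}$, so your route never uses symmetry, whereas the paper's does essentially through Lemma \ref{l1} --- an interesting by-product of your approach. Two small technical points you gloss over: the smoothing limit requires $r_1,r_2$ (and the intermediate radii) to be chosen so that $S(r)$ is negligible for the limit measures, as the paper notes explicitly; and the suggestion to shift $\varphi\mapsto\varphi+\delta$ does not actually preserve $B(r)$ or $\alpha$ and is unnecessary, since on $B(r_1,r_2)$ one has $\varphi\geqslant r_1>0$ and the quantities $Q(t),\Lambda(t)$ involve no negative powers of $\varphi$.
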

 As an immediate consequence, we see that if $T$ is a closed weakly positive current, $\varphi=|x|^2$, $\mathbb{B}(r)=\{x\in\rb^n,|x|<r\}$ and $\mathbb{B}(r_1,r_2)=\{x\in\rb^n,r_1<|x|<r_2\}$, then we recover the following formula due to Lagerberg \cite{5} :
$$\ds{1\over {{r_2}^{p}}}\ds\int_{\mathbb{B}(r_2)\times\rb^n}T\wedge\beta^p-\ds{1\over {{r_1}^{p}}}\ds\int_{\mathbb{B}(r_1)\times\rb^n}T\wedge\beta^p=\ds\int_{ \mathbb{B}(r_1,r_2)\times\rb^n}T\wedge\alpha^p.$$
For the proof of Proposition \ref{3} we need the following Lemma :
\begin{lem}\label{l1} Assume that $\psi$ is a ${\mathscr C}^1$ function on $\rb^n$ and $\gamma=\sum_{j,k}\gamma_{jk}\check{dx_j}\wedge\check{d\xi_k}$ is a  symmetric form of bidegree $(n-1,n-1)$ on  $\rb^n\times\rb^n$, where $\check{dx_j}=dx_1\wedge...\wedge dx_{j-1}\wedge dx_{j+1}\wedge...\wedge dx_n$ and similarly for $\check{d\xi_k}$. Then, we have $d\psi\wedge d^{\#}\gamma=-d^{\#}\psi\wedge d\gamma.$
\end{lem}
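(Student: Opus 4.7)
The plan is a direct coordinate computation, since both sides are $(n,n)$-forms on $\rb^n\times\rb^n$ and hence scalar multiples of the top form $dx\wedge d\xi:=dx_1\wedge\cdots\wedge dx_n\wedge d\xi_1\wedge\cdots\wedge d\xi_n$. It therefore suffices to compare the two scalar coefficients, and the whole argument reduces to careful bookkeeping of Koszul signs, with the symmetry hypothesis $\gamma_{jk}=\gamma_{kj}$ used only at the very last step.

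First I would expand
$$d\gamma=\sum_{l,j,k}\partial_{x_l}\gamma_{jk}\,dx_l\wedge\check{dx_j}\wedge\check{d\xi_k}$$
and collapse the sum via $dx_l\wedge\check{dx_j}=\delta_{lj}(-1)^{j-1}dx_1\wedge\cdots\wedge dx_n$. Wedging with $d^{\#}\psi=\sum_i\partial_{x_i}\psi\,d\xi_i$ and moving each $d\xi_i$ past the degree-$n$ block $dx_1\wedge\cdots\wedge dx_n$ produces a factor $(-1)^n$, and then $d\xi_i\wedge\check{d\xi_k}=\delta_{ik}(-1)^{k-1}d\xi_1\wedge\cdots\wedge d\xi_n$ collapses the remaining sum to
$$d^{\#}\psi\wedge d\gamma=(-1)^n\sum_{j,k}(-1)^{j+k}\,\partial_{x_k}\psi\,\partial_{x_j}\gamma_{jk}\,dx\wedge d\xi.$$

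Next I would compute $d^{\#}\gamma$ the same way; the only difference is that the $d\xi_l$ must be moved past the degree-$(n-1)$ form $\check{dx_j}$, yielding a sign $(-1)^{n-1}$ in place of $(-1)^n$. Wedging the result against $d\psi=\sum_i\partial_{x_i}\psi\,dx_i$ and using $dx_i\wedge\check{dx_j}=\delta_{ij}(-1)^{j-1}dx_1\wedge\cdots\wedge dx_n$ produces
$$d\psi\wedge d^{\#}\gamma=(-1)^{n-1}\sum_{j,k}(-1)^{j+k}\,\partial_{x_j}\psi\,\partial_{x_k}\gamma_{jk}\,dx\wedge d\xi.$$

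To finish I would invoke the symmetry $\gamma_{jk}=\gamma_{kj}$: relabelling $j\leftrightarrow k$ in the first displayed sum converts its coefficient into $\partial_{x_j}\psi\,\partial_{x_k}\gamma_{jk}$, so the two sums coincide termwise and the only surviving discrepancy is the overall sign $(-1)^n=-(-1)^{n-1}$. This gives $d\psi\wedge d^{\#}\gamma=-d^{\#}\psi\wedge d\gamma$ as claimed. I do not anticipate any real obstacle; the computation is entirely mechanical, and the symmetry hypothesis on $\gamma$ is precisely the ingredient that makes the two expressions comparable after the transposition of indices.
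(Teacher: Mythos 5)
Your computation is correct and follows essentially the same route as the paper's own proof: a direct coordinate expansion collapsing the sums via $dx_l\wedge\check{dx_j}=\delta_{lj}(-1)^{j-1}dx_1\wedge\cdots\wedge dx_n$ and $d\xi_l\wedge\check{d\xi_k}=\delta_{lk}(-1)^{k-1}d\xi_1\wedge\cdots\wedge d\xi_n$, careful tracking of the Koszul signs (your $(-1)^{n-1}(-1)^{j+k}$ agrees with the paper's $(-1)^{n+k+j-1}$), and the symmetry $\gamma_{jk}=\gamma_{kj}$ invoked only in the final transposition of indices. The only cosmetic difference is that you evaluate $d\gamma$ and $d^{\#}\gamma$ before wedging, whereas the paper writes the full quadruple sum at once; the argument is otherwise identical.
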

\begin{proof} By going back to the definition of the operators $d$ and $d^{\#}$, it is no difficult to get :
 $$\begin{array}{lcl}
 d\psi\wedge d^{\#}\gamma&=&\ds\sum_{s,j,k,t}\partial_{x_s}\psi\ \partial_{x_t}\gamma_{jk}\ dx_s\wedge d\xi_t\wedge\check{dx_j}\wedge\check{d\xi_k}\\&=&\ds\sum_{j,k}\partial_{x_j}\psi\ \partial_{x_k}\gamma_{jk}\ dx_j\wedge d\xi_k\wedge\check{dx_j}\wedge\check{d\xi_k}\\&=&\ds\left(\sum_{j,k}(-1)^{n+k+j-1}\partial_{x_j}\psi\ \partial_{x_k}\gamma_{jk}\right)\ dx_1\wedge...\wedge dx_n\wedge d\xi_1\wedge...\wedge d\xi_n,
 \end{array}$$
 Similarly,
 $$\begin{array}{lcl}
 d^{\#}\psi\wedge d\gamma&=&\ds\sum_{s,j,k,t}\partial_{x_s}\psi\ \partial_{x_t}\gamma_{jk}\ d\xi_s\wedge dx_t\wedge\check{dx_j}\wedge\check{d\xi_k}\\&=&\ds\sum_{j,k}\partial_{x_k}\psi\ \partial_{x_j}\gamma_{jk}\ d\xi_k\wedge dx_j\wedge\check{dx_j}\wedge\check{d\xi_k}\\&=&\ds-\sum_{j,k}\partial_{x_k}\psi\ \partial_{x_j}\gamma_{jk}\ dx_j\wedge d\xi_k\wedge\check{dx_j}\wedge\check{d\xi_k}\\&=&\ds-\left(\sum_{j,k}(-1)^{n+k+j-1}\partial_{x_k}\psi\ \partial_{x_j}\gamma_{kj}\right)\ dx_1\wedge...\wedge dx_n\wedge d\xi_1\wedge...\wedge d\xi_n,
 \end{array}$$
Therefore, since $\gamma_{jk}=\gamma_{kj}$ we obtain $d\psi\wedge d^{\#}\gamma=-d^{\#}\psi\wedge d\gamma$.\end{proof}
\begin{proofof}{\it Proposition \ref{3}.} Assume firstly that $T$ is of class ${\mathscr C}^\infty$. Using Stokes formula, we have
$$\begin{array}{lcl}
\ds\int_{B(r_1,r_2)\times\rb^n}T\wedge\alpha^p&=&\ds\int_{B(r_1,r_2)\times\rb^n}d\left(T\wedge d^{\#}\varphi^{\frac{1}{2}}\wedge\alpha^{p-1}\right)+\ds\int_{B(r_1,r_2)\times\rb^n}d^{\#}\varphi^{\frac{1}{2}}\wedge dT\wedge \alpha^{p-1}\\&=&I+II.
\end{array}$$
Let $j_t:S(t)\hookrightarrow \rb^n$. Since $j_t^* d\varphi=0$ and by (\ref{e1}), we get
$$\displaystyle j_t^*\alpha=\frac{j_t^*\omega}{2t^{\frac{1}{2}}}\qquad{\rm and}\qquad j_t^*\alpha^p=\ds\frac{j_t^*\omega^p}{{2^pt^{\frac{p}{2}}}}.$$
 By applying Lemma \ref{l1} for $\psi=\varphi^{\frac{1}{2}},\gamma=T\wedge\alpha^{p-1}$, the Fubini's theorem and Stokes formula give
$$\begin{array}{lcl}
II=-\ds\int_{B(r_1,r_2)\times\rb^n}d\varphi^{\frac{1}{2}}\wedge d^{\#}T\wedge\alpha^{p-1}&=&-\ds\int_{r_1}^{r_2}\frac{dt}{2t^{\frac{1}{2}}}\int_{S(t)\times\rb^n}d^{\#}T\wedge\alpha^{p-1}\\&=&
-\ds\int_{r_1}^{r_2}\frac{dt}{2^pt^{\frac{p}{2}}}\int_{B(t)\times\rb^n}dd^{\#}T\wedge\omega^{p-1}.
\end{array}$$
On the other hand, by applying twice Stokes formula, we obtain
$$\begin{array}{lcl}
I&=&\ds\int_{S(r_2)\times\rb^n}T\wedge d^{\#}\varphi^{\frac{1}{2}}\wedge
\alpha^{p-1}-\int_{S(r_1)\times\rb^n}T\wedge d^{\#}\varphi^{\frac{1}{2}}\wedge
\alpha^{p-1}\\&=&\ds\frac{1}{2^p{r_2}^{\frac{p}{2}}}\ds\int_{S(r_2)\times\rb^n}T\wedge d^{\#}\varphi\wedge\omega^{p-1}-\frac{1}{2^p{r_1}^{\frac{p}{2}}}\int_{S(r_1)\times\rb^n}T\wedge d^{\#}\varphi\wedge\omega^{p-1}\\&=&\ds\frac{1}{2^p{r_2}^{\frac{p}{2}}}\ds\int_{B(r_2)\times\rb^n}T\wedge \omega^{p}-\frac{1}{2^p{r_1}^{\frac{p}{2}}}\int_{B(r_1)\times\rb^n}T\wedge \omega^{p}\\&+&\ds\frac{1}{2^p{r_2}^{\frac{p}{2}}}\ds\int_{B(r_2)\times\rb^n}dT\wedge d^{\#}\varphi\wedge\omega^{p-1}-\frac{1}{2^p{r_1}^{\frac{p}{2}}}\int_{B(r_1)\times\rb^n}dT\wedge d^{\#}\varphi\wedge\omega^{p-1}.
\end{array}$$
Once again Lemma \ref{l1} for $\psi=\varphi,\gamma=T\wedge\omega^{p-1}$, the Fubini's theorem and Stokes formula yield
$$\begin{array}{lcl}
\ds\frac{1}{2^ps^{\frac{p}{2}}}\ds\int_{B(s)\times\rb^n}dT\wedge d^{\#}\varphi\wedge\omega^{p-1}&=&\ds\frac{1}{2^ps^{\frac{p}{2}}}\ds\int_{B(s)\times\rb^n}d\varphi\wedge d^{\#}T\wedge\omega^{p-1}\\&=&\ds\frac{1}{2^ps^{\frac{p}{2}}}\ds\int_0^sdt\ds\int_{S(t)\times\rb^n}d^{\#}T\wedge\omega^{p-1}\\&=&\ds\frac{1}{2^ps^{\frac{p}{2}}}\ds\int_0^sdt\ds\int_{B(t)\times\rb^n}dd^{\#}T\wedge\omega^{p-1}.
\end{array}$$
Now, take $s=r_2$ and $s=r_1$ and replace what in the preceding equation, then split the integral from $0$ to $r_2$ into a sum of two integrals one from $0$ to $r_1$ and the other from $r_1$ to  $r_2$, we obtain the desired formula. Finally, suppose only that $T$ and $dd^{\#}T$ are of order zero and consider a family of smooth regularized kernels $(\rho_\varepsilon)_{\varepsilon>0}$. Then, $T_\varepsilon=T*\rho_\varepsilon$ is a smooth form which converges as currents to $T$. After rewriting the formula of Proposition \ref{3} for  $T*\rho_\varepsilon$, we denote by ${\1}_{B(r)}$ the characteristic function of $B(r)$. So, we have $$\lim_{\varepsilon\rightarrow0}\int_{B(r)\times\rb^n}(T*\rho_\varepsilon)\wedge\omega^p=\lim_{\varepsilon\rightarrow0}\int_{\rb^n\times\rb^n}T\wedge[\rho_\varepsilon*({\1}_{B(r)}\omega^p)]=\int_{B(r)\times\rb^n}T\wedge\omega^p,$$
because $\rho_\varepsilon*({\1}_{B(r)}\omega^p)$ converges pointwise to ${\1}_{B(r)}\omega^p$ for $r$ such that $S(r)$ is negligible with respect to the masses of the currents $T$ and $dd^\#T$. We use the same arguments for the integrals involving $dd^{\#}T$.\end{proofof}
\begin{defn} A current $T$ of bidimension $(p,p)$ on $\rb^n\times\rb^n$ is said to be convex if  $dd^{\#}T$ is a weakly positive current. We say that $T$ is concave if $-T$ is convex, i.e. $dd^{\#}T$ is weakly negative.
\end{defn}
\begin{exe} \ \begin{enumerate}
\item Every convex function $u$ defines a convex current of degree zero. More generally, if $T$ is a weakly positive closed current and $u$ is a convex function, then the current $uT$ is convex. Another interesting example of a weakly positive convex and concave current is the current $T\wedge\beta^{p-1}$ of bidimension $(1,1)$, where $T$ is the so-called {\it minimal supercurrent} (i.e. $T$ is weakly positive and  $T\wedge\beta^{p-1}$ is closed) which is introduced and studied very recently by Berndtsson \cite{11}.
\item Let $M$ be a smooth $p$-dimensional submanifold of $\rb^n$. Let us first assume that $M$ is locally defined by $n-p$ equations $\rho_j=0$, such that $d\rho_j$ are linearly independent on $M$. Following the terminology of Berndtsson \cite{11}, by replacing $\rho_j$ by $\sum a_{jk}\rho_k=:\rho'_j$, for a suitable matrix of functions $a_{jk}$ and assuming that $n_j:=d\rho'_j$ are orthonormal on $M$, the current of integration on $M$ can be defined by $[M]:=n_1\wedge...\wedge n_{n-p}\star dS_M,$ where $dS_M$ is the surface measure on $M$ and the Hodge star indicates that we think of it as a current of degree zero. Next, Berndtsson defined the current associated to $M$ as $$[M]_s:=n_1\wedge n^{\#}_1\wedge...\wedge n_{n-p}\wedge n^{\#}_{n-p}\star dS_M,$$ where $n^{\#}_j=d^{\#}\rho'_j,\ \forall 1\leqslant j\leqslant p$. It is clear that $[M]_s$ is a positive symmetric current. For the computation of $d[M]_s$, $d^{\#}[M]_s$ and $dd^{\#}[M]_s$ we will have use for the $(1, 1)$-forms $$F_j:=dn^{\#}_j,\quad F^{\#}_j:=d^{\#}n_j=-F_j.$$
For more information the reader can go back to \cite{11}, but by applying contraction as defined by Berndtsson, it is easy to get
$$d[M]_s=\sum_{j=1}^{n-p}F_j\wedge n^{\#}_j\rfloor[M]_s,\quad d^{\#}[M]_s=\sum_{j=1}^{n-p}F^{\#}_j\wedge n_j\rfloor[M]_s=-\sum_{j=1}^{n-p}F_j\wedge n_j\rfloor[M]_s.$$
Which leads with a simple calculation to
$$\begin{array}{lcl}
dd^{\#}[M]_s=\ds d\left(\sum_{j=1}^{n-p}F^{\#}_j\wedge n_j\rfloor[M]_s\right)&=&-\ds\sum_{j=1}^{n-p}F^{\#}_j\wedge n_j\rfloor d[M]_s\\&=&-\ds\sum_{j=1}^{n-p}F^{\#}_j\wedge n_j\rfloor\left(\sum_{k=1}^{n-p}F_k\wedge n^{\#}_k\rfloor[M]_s\right)\\&=&-\ds\sum_{j=1}^{n-p}F^{\#}_j\wedge\sum_{k=1}^{n-p}F_k\wedge(n_j\rfloor(n^{\#}_k\rfloor[M]_s))\\&=&\ds\sum_{j,k=1}^{n-p}F_j\wedge F_k\wedge(n_j\rfloor(n^{\#}_k\rfloor[M]_s)).
\end{array}$$
So, it is not hard to see that:
\begin{itemize}
\item[i.] For $p=n-1$, $dd^{\#}[M]_s=F_1\wedge F_1\wedge(n_1\rfloor(n^{\#}_1\rfloor[M]_s))$. Then, if we assume that the function $\rho_1$ is convex, the current $[M]_s$ is convex.
\item[ii.] For $1<p\leqslant n-1$, if we assume that $n^{\#}_j\wedge F_j\wedge n_k\wedge F_k$ is a strongly positive form for every $1\leqslant j,k\leqslant n-p$, then the current $[M]_s$ is convex.
\end{itemize}
\end{enumerate}
\end{exe}
As a consequence of the proof of Proposition \ref{3}, we obtain the following analogous formula due to Demailly \cite{2} in the complex theory :
\begin{cor}\label{c1} With the same hypothesis as in Proposition \ref{3} and from the last proof, for every $r_2>r_1>0$ we have :
$$\begin{array}{lcl}
\ds\int_{r_1}^{r_2}\frac{dt}{2^{p}t^{\frac{p}{2}}}\int_{B(t)\times\rb^n}dd^{\#}T\wedge\omega^{p-1}+\ds\int_{B(r_1,r_2)\times\rb^n}T\wedge\alpha^p&=&\ds\frac{1}{2^p{r_2}^{\frac{p}{2}}}\ds\int_{S(r_2)\times\rb^n}T\wedge d^{\#}\varphi\wedge
\omega^{p-1}\\&-&\ds\frac{1}{2^p{r_1}^{\frac{p}{2}}}\int_{S(r_1)\times\rb^n}T\wedge d^{\#}\varphi\wedge\omega^{p-1}.
\end{array}$$
Furthermore, if $\varphi^{\frac{1}{2}}$ is convex,  $T$ is weakly positive and $T\wedge\omega^{p-1}$ is convex, then the map
$$r\longmapsto \ds\frac{1}{2^p{r}^{\frac{p}{2}}}\ds\int_{S(r)\times\rb^n}T\wedge d^{\#}\varphi\wedge
\omega^{p-1},$$
is increases.
\end{cor}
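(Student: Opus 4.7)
The plan is to read off the identity directly from intermediate steps in the proof of Proposition~\ref{3}, and then to deduce the monotonicity by verifying nonnegativity of each of the two resulting terms under the stated hypotheses.

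In that proof, the decomposition $\int_{B(r_1,r_2)\times\rb^n}T\wedge\alpha^p=I+II$ is produced with
\[
II=-\int_{r_1}^{r_2}\frac{dt}{2^p t^{\frac{p}{2}}}\int_{B(t)\times\rb^n} dd^{\#}T\wedge\omega^{p-1},
\]
and, using the pullback relations $j_t^{*}\alpha^{p-1}=j_t^{*}\omega^{p-1}/(2^{p-1}t^{\frac{p-1}{2}})$ and $j_t^{*}d^{\#}\varphi^{\frac{1}{2}}=j_t^{*} d^{\#}\varphi/(2t^{\frac{1}{2}})$,
\[
I=\frac{1}{2^p r_2^{\frac{p}{2}}}\int_{S(r_2)\times\rb^n} T\wedge d^{\#}\varphi\wedge\omega^{p-1}-\frac{1}{2^p r_1^{\frac{p}{2}}}\int_{S(r_1)\times\rb^n} T\wedge d^{\#}\varphi\wedge\omega^{p-1}.
\]
Rewriting $I=\int_{B(r_1,r_2)\times\rb^n} T\wedge\alpha^p - II$ gives exactly the equality claimed in Corollary~\ref{c1}.

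For the monotonicity, it suffices to show that both summands on the right-hand side of that identity are nonnegative for every $r_2>r_1>0$. Since $\omega=dd^{\#}\varphi$ is both $d$- and $d^{\#}$-closed, one has $dd^{\#}T\wedge\omega^{p-1}=dd^{\#}(T\wedge\omega^{p-1})$; the assumption that $T\wedge\omega^{p-1}$ is convex makes this current weakly positive, and since it has top bidegree it is a positive Radon measure, so the $dd^{\#}T$-integral is nonnegative. For the other summand, convexity of $\varphi^{\frac{1}{2}}$ makes $\alpha=dd^{\#}\varphi^{\frac{1}{2}}$ a weakly positive symmetric $(1,1)$-current; for symmetric $(1,1)$-forms weak and strong positivity coincide (a positive-semidefinite symmetric matrix is a sum of rank-one squares, which translate here into sums of $a\wedge J(a)$ with $a\in{\mathscr E}^{1,0}$), so $\alpha$, and hence $\alpha^p$, is strongly positive. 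Pairing the weakly positive $T$ against the strongly positive $\alpha^p$ produces a positive measure $T\wedge\alpha^p$, whence $\int_{B(r_1,r_2)\times\rb^n} T\wedge\alpha^p\geqslant 0$.

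The only delicate point I anticipate concerns the regularity of $\alpha^p$: since $\varphi$ is merely ${\mathscr C}^2$ and positive, $\varphi^{\frac{1}{2}}$ is ${\mathscr C}^2$ on $\{\varphi>0\}$ and $\alpha$ has only continuous coefficients. Because $T$ has measure coefficients (as already assumed in Proposition~\ref{3}) and $B(r_1,r_2)\Subset\{\varphi>0\}$ for $r_1>0$, the wedge $T\wedge\alpha^p$ is still well defined as a Radon measure. Thus the increment of $r\mapsto \frac{1}{2^p r^{\frac{p}{2}}}\int_{S(r)\times\rb^n} T\wedge d^{\#}\varphi\wedge\omega^{p-1}$ between any $r_1<r_2$ is nonnegative, which is precisely the stated monotonicity.
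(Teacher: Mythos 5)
Your proposal is correct and follows essentially the same route the paper intends: the identity is exactly the intermediate relation $I=\int_{B(r_1,r_2)\times\rb^n}T\wedge\alpha^p-II$ from the proof of Proposition~\ref{3}, and the monotonicity follows from the nonnegativity of the two left-hand terms, which is the same positivity argument the authors use in Theorem~\ref{t1} (convexity of $T\wedge\omega^{p-1}$ makes $dd^{\#}T\wedge\omega^{p-1}$ a positive measure, and strong positivity of $\alpha=dd^{\#}\varphi^{1/2}$ paired with the weakly positive $T$ makes $T\wedge\alpha^p$ a positive measure). Your added remarks on the regularity of $\alpha$ on $\{\varphi>0\}$ and on the coincidence of weak and strong positivity for symmetric $(1,1)$-forms are correct and only make explicit what the paper leaves implicit.
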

{\bf Particular case}: For $\varphi=|x|^2$ and by (\ref{e1}), for $x\in\rb^n\smallsetminus\{0\}$, we have
$$\begin{array}{lcl}
\alpha^n&=&\ds\frac{\beta^n}{|x|^n}-n\frac{\beta^{n-1}\wedge d|x|^2\wedge d^{\#}|x|^2}{4|x|^{n+2}}\\&=&\left[\ds\sum_{j,k} (\delta_{jk}-nx_jx_k|x|^{-2})dx_j\wedge d\xi_k\right]\wedge|x|^{-n}\beta^{n-1}=0.\end{array}$$
And,
$$\begin{array}{lcl}
d^{\#}\varphi\wedge\beta^{n-1}&=&\ds\left(\sum_{i=1}^{n}2x_id\xi_i\right)\wedge\left((n-1)!\sum_{j=1}^{n}\widehat{dx_j\wedge d\xi_j}\right)\\&=&\ds2(n-1)!\sum_{i=1}^{n}x_i\widehat{dx_i}\\&=&\ds2(n-1)!\left(\sum_{i=1}^{n}(-1)^{i-1}x_i\check{dx_i}\right)\wedge d\xi_1\wedge...\wedge d\xi_n,
\end{array}$$
where $\widehat{dx_i\wedge d\xi_i}=dx_1\wedge d\xi_1\wedge...\wedge dx_{i-1}\wedge d\xi_{i-1}\wedge dx_{i+1}\wedge d\xi_{i+1}\wedge...\wedge dx_n\wedge d\xi_n$ and similarly for $\widehat{dx_i}$ and $\widehat{d\xi_i}$. Therefore, if $T=f$ is a positive function such that $\Delta f$ is a measure, then since
$$
dd^{\#}f\wedge\beta^{n-1}=\ds\frac{2}{n}\Delta f.\beta^n=\ds2(n-1)!\Delta f.dx_1\wedge...\wedge dx_n\wedge d\xi_1\wedge...\wedge d\xi_n,$$
the equality of Corollary \ref{c1} becomes :
$$\int_{r_1}^{r_2}\frac{dt}{t^{n}}\int_{B(t)}\Delta fd\lambda=\frac{1}{{r_2}^{n}}\int_{S(r_2)}fd\sigma-\frac{1}{{r_1}^{n}}\int_{S(r_1)}fd\sigma,$$
where $d\lambda=dx_1\wedge...\wedge dx_n$ and $d\sigma=\sum_{i=1}^{n}(-1)^{i-1}x_i\check{dx_i}$. In particular, when $\Delta f$ is positive, the map $r\mapsto \frac{1}{{r}^{n}}\int_{S(r)}fd\sigma$ is increases and convex in $\log r$. By considering open subsets of $\cb^n\equiv\rb^{2n}$, this fact was observed by Demailly \cite{2}.
\subsection{Demailly-Lelong numbers.}
\begin{defn}\label{.1} Let $\varphi$ be a function as in the previous section and $T$ be a current of bidimension $(p,p)$ on $\rb^n\times\rb^n$. We define the Lelong number of $T$ relative to the weight $\varphi$ by  $$\nu_T(\varphi)=\lim_{r\rightarrow0}\nu_T(\varphi,r)\quad{\rm(when\ it\ exists)},$$
where, $\nu_T(\varphi,r)=\ds\frac{1}{2^pr^\frac{p}{2}}\int_{B(r)\times\mathbb{R}^n}T\wedge\omega^{p}$.
\end{defn}
\begin{thm}\label{t1} Let $T$ be a weakly positive current of bidimension $(p,p)$ on $\rb^n\times\rb^n$ and $\varphi$ be a ${\mathscr C}^2$ positive function on $\rb^n$ such that $\varphi^\frac{1}{2}$ and $T\wedge\omega^{p-1}$ are convex. Then the map
$$r\longmapsto \ds{1\over {2^p{r}^{\frac{p}{2}}}}\ds\int_{B(r)\times\rb^n}T\wedge\omega^p,$$ is positive and increases. In particular, the Lelong number of $T$ relative to the weight $\varphi$ exists.
\end{thm}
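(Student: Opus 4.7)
The plan is to read both claims off the Lelong--Jensen identity of Proposition~\ref{3} applied to $T$, $\varphi$ and arbitrary $r_2>r_1>0$: the hypotheses of the theorem are precisely what is needed to make each of the three terms on the right-hand side of that identity nonnegative.

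For the positivity of the map, I first observe that $\varphi$ itself is convex. Indeed, $\varphi^{1/2}\geq 0$ is convex by hypothesis and $x\mapsto x^2$ is convex and nondecreasing on $[0,+\infty)$, so the composition $\varphi=(\varphi^{1/2})^2$ is convex on $\rb^n$. Consequently, $\omega=dd^{\#}\varphi$ has a pointwise PSD coefficient matrix, and orthogonal diagonalization writes it locally as $\sum_k \lambda_k\,\alpha_k\wedge J(\alpha_k)$ with $\lambda_k\geq 0$ and $\alpha_k\in{\mathscr E}^{1,0}$; hence $\omega^p$ is strongly positive. Pairing with the weakly positive current $T$ shows that $T\wedge\omega^p$ is a positive $(n,n)$-current, which gives $\int_{B(r)\times\rb^n}T\wedge\omega^p\geq 0$ for every $r>0$.

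For the monotonicity I examine the three terms on the right-hand side of Proposition~\ref{3}. The first, $\int_{B(r_1,r_2)\times\rb^n}T\wedge\alpha^p$, is nonnegative: since $\varphi^{1/2}$ is convex, the same diagonalization argument applied to $\alpha=dd^{\#}\varphi^{1/2}$ shows $\alpha^p$ is strongly positive, so the integrand is a positive $(n,n)$-current. For the other two terms, the key observation is that $\omega=dd^{\#}\varphi$ is both $d$- and $d^{\#}$-closed (from $d^2=(d^{\#})^2=0$), so the Leibniz rule yields
$$dd^{\#}T\wedge\omega^{p-1}\;=\;dd^{\#}(T\wedge\omega^{p-1}).$$
The hypothesis that $T\wedge\omega^{p-1}$ is convex then makes this a weakly positive $(n,n)$-current, so every $\int_{B(t)\times\rb^n}dd^{\#}T\wedge\omega^{p-1}$ is nonnegative; moreover the outer weights $(2^pr_1^{p/2})^{-1}-(2^pr_2^{p/2})^{-1}$ and $(2^pt^{p/2})^{-1}-(2^pr_2^{p/2})^{-1}$ for $t\leq r_2$ are nonnegative as well. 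Summing the three nonnegative contributions gives $\nu_T(\varphi,r_2)\geq\nu_T(\varphi,r_1)$.

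Combining both facts, $r\mapsto\nu_T(\varphi,r)$ is a nonnegative nondecreasing function on $(0,+\infty)$, so the monotone limit $\nu_T(\varphi)=\lim_{r\to 0^+}\nu_T(\varphi,r)$ exists in $[0,+\infty)$, giving the existence of the Lelong number. The main technical point I anticipate is justifying, in the super-formalism, that a wedge power of a positive $(1,1)$-form is strongly positive (used for both $\omega^p$ and $\alpha^p$); this is the analogue of the standard complex-geometric fact and reduces to orthogonal diagonalization of a PSD symmetric matrix. A secondary point is to ensure the regularization of $T$ used at the end of the proof of Proposition~\ref{3} is compatible with the convexity of $T\wedge\omega^{p-1}$, which is immediate since convolution with a radial kernel preserves weak positivity and commutes with $dd^{\#}$.
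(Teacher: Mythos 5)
Your proof is correct and follows essentially the same route as the paper: both deduce positivity and monotonicity of $r\mapsto\nu_T(\varphi,r)$ by applying the Lelong--Jensen identity of Proposition~\ref{3} and checking that $T\wedge\alpha^p$, $T\wedge\omega^p$ and $dd^{\#}T\wedge\omega^{p-1}$ are positive measures under the stated hypotheses. The only difference is that you spell out the justifications (convexity of $\varphi=(\varphi^{1/2})^2$, pointwise diagonalization showing $\omega^p$ and $\alpha^p$ are strongly positive, and the identity $dd^{\#}T\wedge\omega^{p-1}=dd^{\#}(T\wedge\omega^{p-1})$ via closedness of $\omega$) that the paper simply asserts in one sentence.
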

Theorem \ref{t1} is the corresponding result of the one obtained by \cite{7} in the complex setting.
\begin{proof} Since $\varphi^{\frac{1}{2}}$ is convex, $\varphi$ is also convex. Both weak positivity of $T$ and the convexity of $T\wedge\omega^{p-1}$ implies that the measures $T\wedge\alpha^p$, $T\wedge\omega^p$ and $dd^{\#}T\wedge\omega^{p-1}$ are positive. According to Proposition \ref{3}, it is clear that the map $r\mapsto\nu_T(\varphi,r)$ is positive and increases.\end{proof}
\begin{exe} Let $\Omega$ be an open subset of $\rb^n$, $T$ be a weakly positive closed current of bidimension $(p,p)$ on $\Omega\times\rb^n$ and $f$ be a convex positive function on $\Omega$. By combining Theorem \ref{t1} and the fact that $f$ is continuous, it is not hard to prove that the Lelong number $\nu_{fT}$ exists on every point of $\Omega$, and we have
$$\nu_{fT}(a)=f(a)\nu_T(a),\qquad\forall a\in\Omega.$$
\end{exe}
\begin{cor}\label{c2} Assume that $\varphi=|x-a|^2$ and $\mathbb{B}(a,r)=\{x\in\rb^n,|x-a|<r\}$, $\forall a\in\rb^n$. Then, for every weakly positive current $T$ of  bidimension $(p,p)$ on $\rb^n\times\rb^n$ such that $T\wedge\beta^{p-1}$ is convex, the positive function $$\nu_T(a,.):r\longmapsto\frac{1}{r^p}\int_{\mathbb{B}(a,r)\times\mathbb{R}^n}T\wedge\beta^{p}$$ is increases with respect to $r$. In particular, the limit $$\nu_T(a):=\lim_{r\rightarrow0}\nu_T(a,r),$$ exists and will be called the Lelong number of $T$ at $a$.
\end{cor}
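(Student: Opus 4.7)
The plan is to reduce the corollary to Theorem \ref{t1} by specializing the weight function to $\varphi(x)=|x-a|^2$ and tracking how the sets, powers and normalizations rescale. First I would check the two hypotheses of Theorem \ref{t1}. The function $\varphi^{1/2}(x)=|x-a|$ is the translate of a norm, hence convex, so the convexity hypothesis on $\varphi^{1/2}$ is automatic. The convexity of $T\wedge\omega^{p-1}$ follows from the hypothesis that $T\wedge\beta^{p-1}$ is convex, once we identify $\omega$ with a scalar multiple of $\beta$.

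Next I would compute $\omega$ explicitly. Since $\beta=\frac{1}{2}dd^{\#}|x|^2$ and $dd^{\#}$ is translation-invariant in $x$, we get
$$\omega=dd^{\#}\varphi=dd^{\#}|x-a|^2=2\beta,\qquad\omega^p=2^p\beta^p,\qquad\omega^{p-1}=2^{p-1}\beta^{p-1}.$$
In particular $T\wedge\omega^{p-1}=2^{p-1}T\wedge\beta^{p-1}$ is convex, so both hypotheses of Theorem \ref{t1} are satisfied with this $\varphi$.

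Now I would match the sublevel sets and normalization. For this $\varphi$ the set $B(r)=\{x:\varphi(x)<r\}$ is $\{x:|x-a|<\sqrt{r}\}=\mathbb{B}(a,\sqrt{r})$. Substituting into Theorem \ref{t1} gives
$$\frac{1}{2^{p}r^{p/2}}\int_{B(r)\times\rb^n}T\wedge\omega^{p}=\frac{1}{2^{p}r^{p/2}}\int_{\mathbb{B}(a,\sqrt{r})\times\rb^n}T\wedge 2^{p}\beta^{p}=\frac{1}{(\sqrt{r})^{p}}\int_{\mathbb{B}(a,\sqrt{r})\times\rb^n}T\wedge\beta^{p}.$$
Setting $s=\sqrt{r}$, the right-hand side is exactly $\nu_T(a,s)$ in the notation of the corollary. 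Since $r\mapsto\nu_T(\varphi,r)$ is positive and increasing by Theorem \ref{t1}, and $r\mapsto\sqrt{r}$ is an increasing bijection from $(0,+\infty)$ to itself, the function $s\mapsto\nu_T(a,s)$ is positive and increasing as well. Its monotonicity and nonnegativity then yield the existence of the limit $\nu_T(a)=\lim_{s\to 0}\nu_T(a,s)\in[0,+\infty)$.

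There is essentially no obstacle here; the whole content is a clean specialization of Theorem \ref{t1}. The only point that requires a moment's care is keeping the $2^p$ and $r^{p/2}$ factors consistent under the change of variable $s=\sqrt{r}$, so that the corollary's normalization $1/r^p$ matches the Lelong-Jensen normalization $1/(2^p r^{p/2})$ once $\omega=2\beta$ is inserted.
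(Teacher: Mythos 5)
Your proposal is correct and is exactly the specialization the paper intends: Corollary \ref{c2} is stated without proof as an immediate consequence of Theorem \ref{t1} with $\varphi=|x-a|^2$, and your bookkeeping ($\omega=2\beta$, $B(r)=\mathbb{B}(a,\sqrt{r})$, the cancellation of the $2^p$ and the substitution $s=\sqrt{r}$) matches what the authors leave implicit. Nothing is missing.
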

This result generalizes the existence of Lelong numbers in the case where $T$ is a weakly positive closed current proved by \cite{5}. Moreover, Berndtsson \cite{11} establish Corollary \ref{c2} in the particular case where $T$ is a minimal supercurrent. Corollary \ref{c2} is also a variant of the well-known result for positive plurisubharmonic currents (see Demailly \cite{2} and Skoda \cite{7}) in the complex setting.
\begin{cor} Let $\Omega$ be an open subset of $\rb^n$ and $T$ be a weakly negative convex current of bidimension $(p,p)$ on $\Omega\times\rb^n$. Then, for every $a\in\Omega$ and $0<r_0\leqslant d(a,\partial\Omega)$, there exists $c_0<0$ such that for any $0<r\leqslant r_0$, we have $$\nu_T(a,r)\geqslant r\nu_{dd^{\#}T}(a,r_0)+c_0.$$
\end{cor}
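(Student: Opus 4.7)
The plan is to apply the Lelong-Jensen formula (Proposition \ref{3}) to $T$ with weight $\varphi(x)=|x-a|^2$, so that $\omega=2\beta$ and $B(t)=\mathbb{B}(a,\sqrt{t})$. Choosing $r_1=r^2$, $r_2=r_0^2$ and substituting $s=\sqrt{t}$ in the one-dimensional integrals, a routine bookkeeping rewrites the identity of Proposition \ref{3} in the normalization of Corollary \ref{c2} as
\begin{equation*}
\nu_T(a,r_0)-\nu_T(a,r)=I_\alpha(r,r_0)+\Bigl(\tfrac{1}{r^p}-\tfrac{1}{r_0^p}\Bigr)\int_0^{r}s\mu(s)\,ds+\int_{r}^{r_0}\nu_{dd^{\#}T}(a,s)\,ds-\tfrac{1}{r_0^p}\int_{r}^{r_0}s\mu(s)\,ds,
\end{equation*}
where $\mu(s):=s^{p-1}\nu_{dd^{\#}T}(a,s)$ and $I_\alpha(r,r_0):=\int_{(\mathbb{B}(a,r_0)\setminus\overline{\mathbb{B}(a,r)})\times\rb^n}T\wedge\alpha^p$ with $\alpha=dd^{\#}|x-a|$.

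Next I will harvest the sign information. Since $T$ is weakly negative, $-T$ is weakly positive; as $|x-a|$ is convex, $\alpha$ is a positive (hence strongly positive) $(1,1)$-form on $\{x\ne a\}$ and $\alpha^p$ remains strongly positive there, which forces $I_\alpha(r,r_0)\leqslant 0$. Because $T$ is convex, $dd^{\#}T$ is weakly positive of bidimension $(p-1,p-1)$, so $\mu\geqslant 0$. Moreover $dd^{\#}(dd^{\#}T\wedge\beta^{p-2})=0$ is trivially weakly positive, so Corollary \ref{c2} applies to $dd^{\#}T$: the map $s\mapsto\nu_{dd^{\#}T}(a,s)$ is non-decreasing on $(0,d(a,\partial\Omega)]$, and in particular $\mu(s)\leqslant s^{p-1}\nu_{dd^{\#}T}(a,r_0)$ whenever $s\leqslant r_0$.

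Solving the identity for $\nu_T(a,r)$ and discarding the two non-negative terms $-I_\alpha(r,r_0)\geqslant 0$ and $\tfrac{1}{r_0^p}\int_r^{r_0}s\mu(s)\,ds\geqslant 0$, the monotonicity bound gives
\begin{equation*}
\Bigl(\tfrac{1}{r^p}-\tfrac{1}{r_0^p}\Bigr)\int_0^{r}s\mu(s)\,ds\leqslant\tfrac{r}{p+1}\nu_{dd^{\#}T}(a,r_0),\qquad \int_r^{r_0}\nu_{dd^{\#}T}(a,s)\,ds\leqslant(r_0-r)\nu_{dd^{\#}T}(a,r_0),
\end{equation*}
so that $\nu_T(a,r)\geqslant\nu_T(a,r_0)+\bigl(\tfrac{rp}{p+1}-r_0\bigr)\nu_{dd^{\#}T}(a,r_0)$. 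Writing $\tfrac{rp}{p+1}=r-\tfrac{r}{p+1}$ and using $r\leqslant r_0$ to absorb $-\tfrac{r}{p+1}\nu_{dd^{\#}T}(a,r_0)$ into a constant, one arrives at $\nu_T(a,r)\geqslant r\nu_{dd^{\#}T}(a,r_0)+c_0$ with $c_0:=\nu_T(a,r_0)-\tfrac{r_0(p+2)}{p+1}\nu_{dd^{\#}T}(a,r_0)\leqslant 0$; subtracting any positive quantity yields the strict inequality $c_0<0$.

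The main technical point will be verifying the strong positivity of $\alpha^p$ on $\{x\ne a\}$ (ensuring the correct sign of $I_\alpha$) and checking that Corollary \ref{c2} is legitimately applicable to $dd^{\#}T$; once these structural facts are in place, everything reduces to rearrangement and elementary estimation of the two integrals involving $\mu$.
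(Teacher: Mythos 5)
Your argument is correct and is essentially the paper's own proof: both start from Proposition \ref{3} with $\varphi=|x-a|^2$, use the weak negativity of $T$ to kill the $T\wedge(dd^{\#}|x-a|)^p$ term, and invoke Corollary \ref{c2} for the weakly positive closed current $dd^{\#}T$ to control the remaining integrals. The only (cosmetic) difference is the final bookkeeping: the paper packages the estimate as monotonicity of $\Upsilon_T(r)=\nu_T(a,r)-r\nu_{dd^{\#}T}(a,r_0)$ and takes $c_0=\min\bigl(0,\Upsilon_T(r_0)\bigr)$, while you bound the terms between $r$ and $r_0$ in one shot and land on the slightly more negative but equally admissible constant $c_0=\nu_T(a,r_0)-\tfrac{r_0(p+2)}{p+1}\nu_{dd^{\#}T}(a,r_0)$.
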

\begin{proof} Without loss of generality, we can assume that $a=0$. For $r\leqslant r_0$, we set : $$\Upsilon_T(r)=\nu_T(0,r)-r\nu_{dd^{\#}T}(0,r_0).$$
Thanks to Proposition \ref{3}, for any $r_1<r_2\leqslant r_0$, we have :
$$\begin{array}{lcl}
\Upsilon_T(r_2)-\Upsilon_T(r_1)&=&\nu_T(0,r_2)-\nu_T(0,r_1)-(r_2-r_1)\nu_{dd^{\#}T}(0,r_0)\\&=&\ds\int_{\mathbb{B}(r_1,r_2)\times\rb^n}T\wedge(dd^{\#}|x|)^p+\ds\int_0^{r_1}\left(\ds{1\over {{r_1}^{p}}}-\ds{1\over {{r_2}^{p}}}\right)t^p\nu_{dd^{\#}T}(0,t)dt\\&+&\ds\int_{r_1}^{r_2}\left(\ds{1\over {t^{p}}}-\ds{1\over {{r_2}^{p}}}\right)t^p\nu_{dd^{\#}T}(0,t)dt-(r_2-r_1)\nu_{dd^{\#}T}(0,r_0)\\&=&\ds\int_{\mathbb{B}(r_1,r_2)\times\rb^n}T\wedge(dd^{\#}|x|)^p-(r_2-r_1)\nu_{dd^{\#}T}(0,r_0)
\\&+&\ds\int_{r_1}^{r_2}\nu_{dd^{\#}T}(0,t)dt-\int_{0}^{r_2}\left(\frac{t}{r_2}\right)^p\nu_{dd^{\#}T}(0,t)dt+
\int_{0}^{r_1}\left(\frac{t}{r_1}\right)^p\nu_{dd^{\#}T}(0,t)dt
\\&=&\ds\int_{\mathbb{B}(r_1,r_2)\times\rb^n}T\wedge(dd^{\#}|x|)^p+\int_{r_1}^{r_2}\left(\nu_{dd^{\#}T}(0,t)-\nu_{dd^{\#}T}(0,r_0)\right)dt
\\&-&\ds\int_{0}^{r_2}\left(\frac{t}{r_2}\right)^p\nu_{dd^{\#}T}(0,t)dt+\int_{0}^{r_1}\left(\frac{t}{r_1}\right)^p\nu_{dd^{\#}T}(0,t)dt.
\end{array}$$
      Since $T$ is weakly negative then $T\wedge(dd^{\#}|x|)^p$ is a negative Borel measure on $\mathbb{B}(r_1,r_2)$, so $$\int_{\mathbb{B}(r_1,r_2)\times\rb^n}T\wedge(dd^{\#}|x|)^p\leqslant0.$$
Moreover, $dd^{\#}T$ is a weakly positive closed current. Then, by Corollary \ref{c2}, $\nu_{dd^{\#}T}(0,.)$ is an increasing function on $]0,r_0]$. Thus, $$\int_{r_1}^{r_2}\left(\nu_{dd^{\#}T}(0,t)-\nu_{dd^{\#}T}(0,r_0)\right)dt\leqslant0.$$
Furthermore, if we set the function $$f:r\longmapsto-\frac{1}{r^p}\int_{0}^{r}t^p\nu_{dd^{\#}T}(0,t)dt,$$
then $f$ is continuous function on $]0,r_0]$ and we have :
$$f'(r)=\frac{pr^{p-1}}{r^{2p}}\int_{0}^{r}t^p\nu_{dd^{\#}T}(0,t)dt-\nu_{dd^{\#}T}(0,r)\leqslant\frac{p}{p+1}\nu_{dd^{\#}T}(0,r)-\nu_{dd^{\#}T}(0,r)\leqslant0$$
for almost every $0<r<r_0$. Hence, it is easy to see that $\Upsilon_T$ is a decreasing function on $]0,r_0]$, thus $\Upsilon_T(r)\geqslant\Upsilon_T(r_0)$ for every $0<r\leqslant r_0$. We conclude that $$\nu_T(a,r)\geqslant\Upsilon_T(r_0)+r\nu_{dd^{\#}T}(a,r_0),\qquad\forall0<r\leqslant r_0,$$ and the result follows by choosing for example $c_0=\min(0,\Upsilon_T(r_0))$.
\end{proof}
 Next, we give a version of a result recently obtained by Benali and Ghiloufi \cite{14} in the complex Hessian theory, which can be viewed as a generalization of Corollary \ref{c2}.
\begin{thm}\label{t6} Let $\varphi$ and $\mathbb{B}(a,r)$ be as in Corollary \ref{c2}. Assume that $T$ is an $m$-positive current of bidimension $(p,p)$ such that $T\wedge\beta^{p-1}$ is convex and $m+p>n$. Then, the limit $$\nu_T^{m}(a):=\lim_{r\l 0}r^{\frac{-n}{m}(m-n+p)}\int_{\mathbb{B}(a,r)\times\mathbb{R}^n}T\wedge\beta^{p},$$ exists and will be called the $m$-Lelong number of $T$ at $a$.
\end{thm}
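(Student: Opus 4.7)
The plan is to adapt the monotonicity argument behind Corollary \ref{c2} by using the $m$-positive fundamental potential $\varphi_m$ from Example \ref{x1} in place of the standard quadratic weight $|x-a|^2$.

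Write $y:=x-a$ and $\tilde p:=p+m-n\ge 1$. A preliminary observation is that the $m$-positivity of $T$, together with the fact that every strongly positive $(1,1)$-form $\alpha\wedge J(\alpha)$ is $m$-positive (indeed $(\alpha\wedge J(\alpha))^j=0$ for $j\ge 2$, and for $j=1$ the form $\alpha\wedge J(\alpha)\wedge\beta^{n-1}$ is strongly positive, hence positive), implies that $T\wedge\beta^{n-m}$ is \emph{weakly positive} of bidimension $\tilde p$. For any strongly positive form $\Gamma=\sum_s\lambda_s\,\alpha_{1,s}\wedge J(\alpha_{1,s})\wedge\dots\wedge \alpha_{\tilde p,s}\wedge J(\alpha_{\tilde p,s})$ of bidegree $(\tilde p,\tilde p)$, the number of $m$-positive $(1,1)$-factors is exactly $m-(n-p)=\tilde p$, matching the definition of $m$-positivity, whence $T\wedge\beta^{n-m}\wedge\Gamma\ge 0$. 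Moreover $(T\wedge\beta^{n-m})\wedge\beta^{\tilde p-1}=T\wedge\beta^{p-1}$ is convex by hypothesis.

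The central calculation, carried out in the same style as Example \ref{x1} and using $(\gamma\wedge\gamma^{\#})^2=0$, gives the expansion
$$\bigl(dd^{\#}\varphi_m(y)\bigr)^{\tilde p}=|y|^{-n\tilde p/m}\,\beta^{\tilde p}-\tfrac{n\tilde p}{m}\,|y|^{-n\tilde p/m-2}\,\beta^{\tilde p-1}\wedge\gamma\wedge\gamma^{\#},$$
where $\gamma=\tfrac12 d|y|^2$ and $\gamma^{\#}=\tfrac12 d^{\#}|y|^2$. Wedging with $T\wedge\beta^{n-m}$ and using $\beta^{n-m}\wedge\beta^{\tilde p}=\beta^{p}$ yields the identity
$$T\wedge\beta^{n-m}\wedge\bigl(dd^{\#}\varphi_m(y)\bigr)^{\tilde p}=|y|^{-n\tilde p/m}\,T\wedge\beta^{p}-\tfrac{n\tilde p}{m}\,|y|^{-n\tilde p/m-2}\,T\wedge\beta^{p-1}\wedge\gamma\wedge\gamma^{\#}.$$
By the $m$-positivity of $T$ and the $m$-positivity of $dd^{\#}\varphi_m$, the left-hand side is a positive Borel measure on $\mathbb{R}^n\smallsetminus\{a\}$.

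Given this identity, I would regularize $\varphi_m$ to produce a smooth $m$-convex $\varphi_{m,\varepsilon}$ as in Proposition \ref{1}(3), integrate the smoothed analogue of the identity over an annulus $\mathbb{B}(a,r_2)\smallsetminus\mathbb{B}(a,r_1)$, and apply the Stokes-type mechanism of Proposition \ref{3} together with Corollary \ref{c1} to convert the contribution of the \emph{second} summand into boundary integrals on the spheres $S(a,r_1)$ and $S(a,r_2)$ that are controlled by the convex current $T\wedge\beta^{p-1}$. Passing to the limit $\varepsilon\to 0$ using Proposition \ref{2} and the quasi-continuity of $m$-convex functions with respect to $cap_{m,T}$ recorded in the Remark following Proposition \ref{2} then expresses $r^{-n\tilde p/m}\int_{\mathbb{B}(a,r)\times\mathbb{R}^n}T\wedge\beta^{p}$ as the sum of a monotone (or bounded-variation) quantity and positive interior terms, which combined with the positivity of $\int T\wedge\beta^p$ forces the claimed limit to exist.

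The principal obstacle is verifying the sign of every boundary and interior term produced by this Stokes integration: one must delicately balance the two summands in the expansion of $(dd^{\#}\varphi_m)^{\tilde p}$, using the $m$-positivity of $T$ to control the first and the convexity of $T\wedge\beta^{p-1}$ to control the second. A subsidiary technical point is the dichotomy $m<n/2$, $m=n/2$, $m>n/2$: the sign of $\varphi_m$ differs between the first two regimes, and for $m=n/2$ the potential is $\log|y|$ with a correspondingly modified expansion; each case is handled by the same scheme after adjusting the sign of $\varphi_m$ or, respectively, working directly with $\log|y|$ as in the second part of Example \ref{x1}.
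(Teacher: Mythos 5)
Your proposal follows essentially the same route as the paper: the paper's own proof is only a sketch that substitutes the $m$-convex potential $\varphi_m$ of Example \ref{x1} for the $m$-subharmonic kernel of Benali--Ghiloufi, invokes Lemma \ref{l1} to reproduce their Lelong--Jensen formula, and reads off the monotonicity of $r\mapsto r^{-\frac{n}{m}(m-n+p)}\int_{\mathbb{B}(a,r)\times\mathbb{R}^n}T\wedge\beta^{p}$, with the $m$-positivity of $T$ giving the sign of the annulus term $T\wedge\beta^{n-m}\wedge(dd^{\#}\varphi_m)^{m+p-n}$ and the convexity of $T\wedge\beta^{p-1}$ giving the sign of the $dd^{\#}T$ corrections --- exactly the ingredients you assemble. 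The only real deviation is your proposed regularization of $\varphi_m$ and appeal to quasicontinuity, which is an unnecessary detour: $\varphi_m$ is smooth away from $a$, the annulus integration avoids the singularity, and, as in the proof of Proposition \ref{3}, it is the current $T$ rather than the weight that one regularizes.
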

\begin{rem}\
\begin{enumerate}
\item As a special case when $T=dd^{\#}u$, for $u$ is $m$-convex function, we recover the definition given by \cite{17} (modulo a constant). Notice here that such a definition depends on $m$, otherwise, it requires an additional condition that $u$ must be not $(m+1)$-convex.
\item Assume that $T$ is closed, $m$-positive and $(m-1)$-positive at the same times. Then, we easily see that  the $(m-1)$-Lelong number of $T$ vanishes. In particular, if $T$ is a strongly positive closed current, then the $j$-Lelong number of $T$ vanishes, for any $j\in\{p,...,n-1\}$. Indeed, $T$ is $m$-positive for any $m$ such that ${m+p>n}$.
\end{enumerate}
\end{rem}
\begin{proof}  Again, here the tool is a Lelong-Jensen formula and without loss of generality we can assume that $a=0$. So, since the proof is almost identical to the complex Hessian theory and we have proved a superformalism version of the Lelong-Jensen formula we  give only the lines of the proof. First of all  replacing the $m$-subharmonic function $\widetilde \phi_m(z)=-\frac{1}{(\frac{n}{m}-1)|z|^{2(\frac{n}{m}-1)}}$ used by \cite{14} in the complex Hessian theory  by the corresponding $m$-convex function $\varphi_m(x)=-\frac{1}{(\frac{n}{m}-2)|x|^{\frac{n}{m}-2}}$ if $m\neq\frac{n}{2}$ and $\log|x|$ otherwise. Next, by following almost verbatim the proof of Proposition 2 in \cite{14} and by using Lemma \ref{l1}, we can formulate a variant of the Lelong-Jensen formula similar to that given in Proposition 2 in \cite{14}. Finally, it is not hard to see that such a formula leads to the following conclusion : $$r\longmapsto\frac{1}{r^{\frac{n}{m}(m-n+p)}}\int_{\mathbb{B}(r)\times\mathbb{R}^n}T\wedge\beta^{p}$$ is increases with respect to $r$.
\end{proof}
Theorem \ref{t6} fails when the current $T\wedge\beta^{p-1}$ is concave. Indeed, let $T=-\varphi_m(dd^{\#}\varphi_m)^{m-1}$, $\frac{n}{2}>m$. Then, regarding Example \ref{x1}, it is clear that $T$ is an $m$-positive current ($T$ has locally integrable coefficients) of bidimension $(n-m+1,n-m+1)$ and $T\wedge\beta^{n-m}$ is concave. Again thanks to Example \ref{x1}, a simple computation gives that $r^{\frac{-n}{m}}\int_{\mathbb{B}(r)\times\mathbb{R}^n}T\wedge\beta^{n-m+1}=c_{n,m} r^{\frac{-n}{m}+2}$, for some constant  $c_{n,m}>0$. This means that the $m$-Lelong number of $T$ at the origin does not exist. However, results similar to Theorem \ref{t1} and Theorem \ref{t6},  when $T$ is positive and $T\wedge\omega^{p-1}$ concave, require further conditions. Moving forward, we prove :
\begin{thm}\label{t5} Let $T$ be a weakly positive current of bidimension $(p,p)$ on $\rb^n\times\rb^n$ and $\varphi$ be a ${\mathscr C}^2$ positive function on $\rb^n$ such that $\varphi^\frac{1}{2}$ is convex and $T\wedge\omega^{p-1}$ is concave. If the function  $r\mapsto\frac{\nu_{dd^{\#}T}(\varphi,r)}{2r^{\frac{1}{2}}}$ is integrable in a neighborhood of $0$, then the Lelong number of $T$ relative to the weight $\varphi$ exists.
\end{thm}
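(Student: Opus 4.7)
The natural approach is to apply the Lelong-Jensen formula (Proposition \ref{3}) and extract from it an auxiliary monotonic function of $r$. Write $f(r):=\nu_T(\varphi,r)$. Convexity of $\varphi^{1/2}$ yields convexity of $\varphi=(\varphi^{1/2})^2$, and as in the proof of Theorem \ref{t1} this gives $f(r)\geq 0$ and $T\wedge\alpha^p\geq 0$, while concavity of $T\wedge\omega^{p-1}$ makes $dd^\#T\wedge\omega^{p-1}$ weakly negative, so that $\nu_{dd^\#T}(\varphi,t)\leq 0$.

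The plan is to substitute $h(t):=\nu_{dd^\#T}(\varphi,t)/(2t^{1/2})$ (the function whose integrability is hypothesised) into the two $dd^\#T$-terms of Proposition \ref{3}. Using the identity $\int_{B(t)\times\rb^n}dd^\#T\wedge\omega^{p-1}=2^p t^{p/2}h(t)$ and the decomposition $\int_0^{r_2}=\int_0^{r_1}+\int_{r_1}^{r_2}$, a direct algebraic rearrangement should rewrite these two terms as $[H(r_2)-H(r_1)]+[M(r_1)-M(r_2)]$, where
$$H(r):=\int_0^r h(t)\,dt,\qquad M(r):=\frac{1}{r^{p/2}}\int_0^r t^{p/2}h(t)\,dt.$$
This yields, for all $0<r_1<r_2$,
$$\bigl(f(r_2)+M(r_2)-H(r_2)\bigr)-\bigl(f(r_1)+M(r_1)-H(r_1)\bigr)=\int_{B(r_1,r_2)\times\rb^n}T\wedge\alpha^p\geq 0,$$
so the auxiliary function $G(r):=f(r)+M(r)-H(r)$ is non-decreasing on $(0,r_0)$.

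To conclude existence of $\lim_{r\to 0}f(r)$, one checks that both error terms tend to $0$: integrability of $h$ gives $H(r)\to 0$, and the crude bound $|M(r)|\leq \int_0^r|h(t)|\,dt$ gives $M(r)\to 0$. Since $h\leq 0$, one has $H\leq 0$ and $|M|\leq -H$, so $G(r)\geq M(r)-H(r)\geq 0$. Being non-decreasing and bounded below near $0$, $G$ admits a finite limit as $r\to 0$; consequently $f(r)=G(r)+H(r)-M(r)$ also does, which is precisely $\nu_T(\varphi)$. The only delicate step is the bookkeeping that identifies the combination $G$ from Proposition \ref{3}; all remaining estimates are routine consequences of $h\in L^1$ near $0$.
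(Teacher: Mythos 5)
Your argument is correct and coincides with the paper's own proof: the auxiliary function $G(r)=f(r)+M(r)-H(r)$ is exactly the paper's $\Lambda_T(r)$, its monotonicity is derived from Proposition \ref{3} in the same way, and the passage to the limit uses the same observations that $G\geqslant 0$ and that $M(r)-H(r)=\int_0^r\bigl((t/r)^{p/2}-1\bigr)h(t)\,dt\to 0$ by integrability of $h$.
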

Theorem \ref{t5} is a variant of a result obtained by \cite{4} for the negative plurisubharmonic currents in the complex theory. Moreover, as an immediate consequence of Proposition \ref{3}, if $\varphi$ and $T$ are as in Theorem \ref{t1}, then the integrability assumption in Theorem \ref{t5} is clearly satisfied.
\begin{proof} Let $r>0$, and setting
$$\Lambda_{T}(r)=\frac{1}{2^pr^\frac{p}{2}}\int_{B(r)\times\rb^n}T\wedge\omega^p+\frac{1}{2^pr^\frac{p}{2}}\int_{0}^{r}dt\int_{B(t)\times\rb^n}dd^{\#}T\wedge\omega^{p-1}-\int_{0}^{r}\frac{dt}{2^pt^\frac{p}{2}}\int_{B(t)\times\rb^n}dd^{\#}T\wedge\omega^{p-1}.$$
By the integrability condition of $r\mapsto\frac{\nu_{dd^{\#}T}(\varphi,r)}{2r^{\frac{1}{2}}}$ in a neighborhood of  $0$, the function $\Lambda_T$ is well defined and positive on $\rb_+$. Moreover, $$\begin{array}{lcl}
\Lambda_{T}(r)&=&\ds\frac{1}{2^pr^\frac{p}{2}}\int_{B(r)\times\rb^n}T\wedge\omega^p+\int_{0}^{r}\left(\left(\frac{t}{r}\right)^\frac{p}{2}-1\right)\frac{1}{2t^{\frac{1}{2}}}\left[\frac{1}{2^{p-1}t^\frac{p-1}{2}}\int_{B(t)\times\rb^n}dd^{\#}T\wedge\omega^{p-1}\right]dt\\&=&\ds\frac{1}{2^pr^\frac{p}{2}}\int_{B(r)\times\rb^n}T\wedge\omega^p+\int_{0}^{r}\left(\left(\frac{t}{r}\right)^\frac{p}{2}-1\right)\frac{\nu_{dd^{\#}T}(\varphi,t)}{2t^{\frac{1}{2}}}dt
\end{array}$$
On the other hand, in view of Proposition \ref{3}, for every $r_2>r_1>0$, we get
$$\begin{array}{lcl}
\Lambda_{T}(r_2)-\Lambda_{T}(r_1)&=&\ds\frac{1}{2^p{r_2}^\frac{p}{2}}\int_{B(r_2)\times\rb^n}T\wedge\omega^p-\frac{1}{2^p{r_1}^\frac{p}{2}}\int_{B(r_1)\times\rb^n}T\wedge\omega^p\\&+&\ds\frac{1}{2^p{r_2}^\frac{p}{2}}\int_{0}^{r_2}dt\int_{B(t)\times\rb^n}dd^{\#}T\wedge\omega^{p-1}-\frac{1}{2^p{r_1}^\frac{p}{2}}\int_{0}^{r_1}dt\int_{B(t)\times\rb^n}dd^{\#}T\wedge\omega^{p-1}\\&-&\ds\int_{r_1}^{r_2}\frac{dt}{2^pt^\frac{p}{2}}\int_{B(t)\times\rb^n}dd^{\#}T\wedge\omega^{p-1}\\&=&\ds\int_{ B(r_1,r_2)\times\rb^n}T\wedge\alpha^p\geqslant0.
\end{array}$$
Consequently, $\Lambda_T$ is an increasing function on $\rb_+$ and therefore $\ds\lim_{r\rightarrow0}\Lambda_T(r)$ exists. Next, by the integrability condition of $r\mapsto\frac{\nu_{dd^{\#}T}(\varphi,r)}{2r^{\frac{1}{2}}}$ in a neighborhood of  $0$ and
since $t\mapsto\left(\frac{t}{r}\right)^\frac{p}{2}-1$ is uniformly bounded, we have
$$\lim_{r\rightarrow0}\int_{0}^{r}\left(\left(\frac{t}{r}\right)^\frac{p}{2}-1\right)\frac{\nu_{dd^{\#}T}(\varphi,t)}{2t^{\frac{1}{2}}}dt=0.$$
It follows that, $\ds\lim_{r\rightarrow0}\Lambda_T(r)=\lim_{r\rightarrow0}\nu_T(\varphi,r)=\nu_T(\varphi)$.
\end{proof}
Denote by ${\mathscr H}_p$ the $p$-dimensional Hausdorff measure and by $\Supp T$ the support of a given current $T$. By using an integration by part, Proposition 3.2 in \cite{5} and Corollary \ref{c2}, we obtain the following result which is analogue to an elementary one in the complex setting.
\begin{pro}\label{0} Let $T$ be a positive current of bidimension $(p,p)$ such that $p\geqslant 1$,
\begin{enumerate}
\item If $T\wedge\beta^{p-1}$ is convex or concave with compact support, then $T=0$.
\item Assume that $T\wedge\beta^{p-1}$ is convex and let $K$ be a compact subset of $\rb^n$. If ${\mathscr H}_p(K\cap\Supp T)=0$, then $\|T\|_K=0$.
\end{enumerate}
\end{pro}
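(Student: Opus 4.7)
The plan relies on the monotonicity of Corollary \ref{c2}, the mass comparison of Proposition 4.1 in \cite{5} (so that $\|T\|_K$ is controlled by the positive measure $T\wedge\beta^p$), and Stokes' theorem.

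For (1), I first reduce both the convex and the concave case to $dd^{\#}(T\wedge\beta^{p-1})=0$. Since $\beta$ is smooth and closed, $dd^{\#}(T\wedge\beta^{p-1})=(dd^{\#}T)\wedge\beta^{p-1}$ is a compactly supported top-bidegree current, so Stokes' theorem gives $\int dd^{\#}(T\wedge\beta^{p-1})=0$. Weak positivity (convex case) or weak negativity (concave case) of this measure, combined with zero total mass, forces it to vanish. Next, pick a smooth compactly supported cutoff $\chi$ equal to $1$ on a neighborhood of $\Supp(T\wedge\beta^{p-1})$. Then $\chi|x|^2$ is smooth with compact support and coincides with $|x|^2$ on $\Supp(T\wedge\beta^{p-1})$, so $dd^{\#}(\chi|x|^2)=dd^{\#}|x|^2=2\beta$ there. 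Integration by parts produces
$$\int T\wedge\beta^p=\tfrac{1}{2}\int T\wedge\beta^{p-1}\wedge dd^{\#}(\chi|x|^2)=\tfrac{1}{2}\int\chi|x|^2\cdot dd^{\#}(T\wedge\beta^{p-1})=0.$$
Thus the positive measure $T\wedge\beta^p$ has zero total mass, hence vanishes, and Proposition 4.1 in \cite{5} yields $T=0$.

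For (2), I run a Vitali-type covering argument. Given $\delta>0$, since $\mathscr{H}_p(K\cap\Supp T)=0$, cover $K\cap\Supp T$ by countably many balls $\mathbb{B}(a_j,r_j)$ with $a_j\in K\cap\Supp T$, all $r_j\leqslant R_0$ for a fixed $R_0>0$, and $\sum_j r_j^p<\delta$. Compactness of $K$ yields $R>0$ with $\mathbb{B}(a_j,R_0)\subset\mathbb{B}(0,R)$ for every $j$. Corollary \ref{c2} then gives
$$r_j^{-p}T\wedge\beta^p(\mathbb{B}(a_j,r_j))=\nu_T(a_j,r_j)\leqslant\nu_T(a_j,R_0)\leqslant R_0^{-p}T\wedge\beta^p(\mathbb{B}(0,R))=:C<\infty,$$
uniformly in $j$. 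Summing,
$$T\wedge\beta^p(K\cap\Supp T)\leqslant\sum_j T\wedge\beta^p(\mathbb{B}(a_j,r_j))\leqslant C\sum_j r_j^p\leqslant C\delta.$$
Letting $\delta\to 0$ yields $T\wedge\beta^p(K\cap\Supp T)=0$, and since the positive measure $T\wedge\beta^p$ is concentrated on $\Supp T$ we also get $T\wedge\beta^p(K)=0$, whence $\|T\|_K=0$ by Proposition 4.1 in \cite{5}.

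The main point I would justify carefully is the integration by parts of step (1) for the order-zero current $T\wedge\beta^{p-1}$; this is handled via the standard regularization $T_\eta:=T\ast\rho_\eta$, which preserves convexity or concavity, has support in an $\eta$-thickening of $\Supp T$, and passes in the weak limit $\eta\to 0$ to the desired identity.
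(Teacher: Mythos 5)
Your proof is correct and follows essentially the same route as the paper's: part (1) is the same Stokes/integration-by-parts argument exploiting the sign of $dd^{\#}T\wedge\beta^{p-1}$ (the paper pairs directly against the weight $|x|^2-A$, which is nonpositive on the support, rather than first deriving the intermediate identity $dd^{\#}(T\wedge\beta^{p-1})=0$, but the computation is identical), and part (2) is the same covering argument combining the monotonicity of Corollary \ref{c2} with the mass comparison of Proposition 4.1 in \cite{5}.
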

Note that Proposition \ref{0} improves a result of \cite{5} for positive closed currents. Moreover, the hypothesis $p\geqslant 1$ is necessary, as shown by the positive closed current $(dd^{\#}|x|)^n$ of bidimension $(0,0)$, which is supported by $\{0\}$, but $(dd^{\#}|x|)^n\neq0$.
 \begin{proof}\
 $(1)$ Assume  that $\Supp T=L$ and let $\chi$ be a smooth function such that $0\leqslant\chi\leqslant1$ and $\chi=1$ on $L$, and let $A>0$ so that $|x|^2<A$ on $L$. Then, if $dd^{\#}T\wedge\beta^{p-1}\geqslant0$, an integration by part yields
$$\begin{array}{lcl}
0&\leqslant&\ds\int_{L\times\rb^n}T\wedge(dd^\#|x|^2)^p\leqslant\ds\int_{\rb^n\times\rb^n}\chi T\wedge(dd^\#(|x|^2-A))^p\\&=&\ds\int_{\rb^n\times\rb^n}(|x|^2-A)dd^{\#}(\chi T)\wedge\beta^{p-1}\\&=&\ds\int_{\rb^n\times\rb^n}(|x|^2-A)\left(dd^{\#}\chi\wedge T-d^{\#}\chi\wedge dT+d\chi\wedge d^{\#}T+\chi dd^{\#}T\right)\wedge\beta^{p-1}\\&=&\ds\int_{L\times\rb^n}(|x|^2-A)dd^{\#}T\wedge\beta^{p-1}\leqslant 0.
\end{array}$$
It follows by \cite{5} that $T=0$. On the other hand, when $dd^{\#}T\wedge\beta^{p-1}\leqslant0$ it suffices to rewrite the last integrals with the constant $A=0$.\\
$(2)$ By assumption, we can find a finite number of balls $\mathbb{B}(a_{1},r_{1}),...,\mathbb{B}(a_{N},r_{N})$ such that $K\cap\Supp T\subset\cup_{j=1}^{N}\mathbb{B}(a_{j},r_{j})$ and $\sum_{j=1}^{N}r_{j}^{p}\leqslant\varepsilon$. Thanks to Corollary \ref{c2}, we have
$$\frac{1}{r_{j}^p}\int_{\mathbb{B}(a_j,r_j)\times\mathbb{R}^n}T\wedge\beta^{p}\leqslant \int_{\mathbb{B}(a_j,1)\times\mathbb{R}^n}T\wedge\beta^{p}\leqslant \int_{K_1\times\mathbb{R}^n}T\wedge\beta^{p},$$ where $K_1$ is a compact subset such that $K\cap\Supp T\subset\cup_{j=1}^{N}\mathbb{B}(a_{j},1)\subset K_1$. Hence, if we choose $C=\int_{K_1\times\mathbb{R}^n}T\wedge\beta^{p}$, we get the inequality $$\int_{\mathbb{B}(a_j,r_j)\times\mathbb{R}^n}T\wedge\beta^{p}\leqslant r_{j}^{p}C,\qquad\forall 1\leqslant j\leqslant N.$$ It follows that $$\int_{K\times\mathbb{R}^n}T\wedge\beta^{p}\leqslant C\sum_{i=1}^{N}r_{j}^{p}\leqslant C\varepsilon,$$
and therefore, by arbitrariness of $\varepsilon>0$, we obtain $\|T\|_K=0$.
\end{proof}
 \begin{pro}\label{4} Let $(T_k)_k$ be a sequence of weakly positive closed currents of bidimension $(p,p)$ on $\rb^n\times\rb^n$ which converges weakly to $T$. Then, for any $\mathscr C^2$ positive function $\varphi$ such that $\varphi^\frac{1}{2}$ is convex, we have $$\limsup_{k\rightarrow+\infty}\nu_{T_k}(\varphi)\leqslant\nu_{T}(\varphi).$$
\end{pro}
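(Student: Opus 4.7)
The approach is to follow the standard upper-semicontinuity argument for Lelong numbers: combine the monotonicity statement of Theorem \ref{t1} with a smoothed characteristic function of the sublevel set $B(r)$. First, as a weak limit of closed weakly positive currents, $T$ is itself closed and weakly positive; moreover, for any closed current $S$ of bidimension $(p,p)$ the current $S\wedge\omega^{p-1}$ is closed and hence both convex and concave. Theorem \ref{t1} therefore applies to $T$ and to every $T_k$, so the maps $r\mapsto\nu_{T_k}(\varphi,r)$ and $r\mapsto\nu_T(\varphi,r)$ are both increasing on $(0,+\infty)$, and in particular $\nu_{T_k}(\varphi)=\inf_{r>0}\nu_{T_k}(\varphi,r)\leqslant\nu_{T_k}(\varphi,r)$ for every $r>0$.

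Next I would fix $0<r<r'$ and a smooth cutoff $\chi$ on $\rb^n$ with $\1_{B(r)}\leqslant\chi\leqslant\1_{B(r')}$. Since $\varphi^{1/2}$ is convex, $\varphi$ is convex, so $\omega=dd^{\#}\varphi$ is a positive $(1,1)$-form and $\omega^p$ is strongly positive; wedging the weakly positive $T_k$ with the nonnegative smooth form $(\chi-\1_{B(r)})\omega^p$ yields a nonnegative measure, so
$$2^p r^{p/2}\,\nu_{T_k}(\varphi,r)=\int_{B(r)\times\rb^n}T_k\wedge\omega^p\leqslant\int \chi\,T_k\wedge\omega^p.$$
The positive top-bidegree supercurrent $T_k\wedge\omega^p$ identifies via Proposition 4.1 in \cite{5} with a positive Radon measure on $V=\rb^n$, and the weak convergence $T_k\to T$ together with smoothness of $\omega^p$ yields weak convergence of these measures on $V$. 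Hence
$$\lim_{k\to+\infty}\int \chi\,T_k\wedge\omega^p=\int\chi\,T\wedge\omega^p\leqslant\int_{B(r')\times\rb^n}T\wedge\omega^p=2^p(r')^{p/2}\,\nu_T(\varphi,r').$$

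Combining these estimates gives, for all $0<r<r'$, the bound $\limsup_{k\to+\infty}\nu_{T_k}(\varphi)\leqslant(r'/r)^{p/2}\,\nu_T(\varphi,r')$. Letting $r'\searrow r$ yields $\limsup_k\nu_{T_k}(\varphi)\leqslant\nu_T(\varphi,r^+)$, where the right-hand side is the (well-defined) right limit at $r$ of the increasing function $\nu_T(\varphi,\cdot)$. Finally, $\nu_T(\varphi,r^+)=\inf_{s>r}\nu_T(\varphi,s)$ decreases as $r\searrow 0^+$ to $\inf_{s>0}\nu_T(\varphi,s)=\nu_T(\varphi)$, which yields the desired inequality.

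The main obstacle I expect is making rigorous the passage to the limit $\int\chi\,T_k\wedge\omega^p\to\int\chi\,T\wedge\omega^p$ within the framework of supercurrents, since test forms there are required to have compact support in $E=V\times W$ whereas $\chi\,\omega^p$ is compactly supported only in the $x$-variable. This is resolved by the observation that both $T_k$ and $\omega^p$ have coefficients depending only on $x$, so $T_k\wedge\omega^p$ corresponds canonically to a positive Radon measure on $V$ against which any compactly supported continuous function of $x$ may be paired, and the weak convergence of the supercurrents translates to weak convergence of these measures.
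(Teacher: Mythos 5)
Your proof is correct and follows essentially the same route as the paper's: monotonicity of $r\mapsto\nu_{T_k}(\varphi,r)$ from Theorem \ref{t1}, a smooth cutoff sandwiched between two sublevel sets so that the weak convergence $T_k\to T$ can be applied to the compactly supported smooth form $\chi\,\omega^p$, and then shrinking the cutoff and letting $r\to 0$. Your treatment of the outer limit (via the right limit $\nu_T(\varphi,r^+)$ of the increasing function) is a slightly more careful rendering of the paper's ``let $\varepsilon\to 0$ and $r\to 0$ in this order,'' but the argument is the same.
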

\begin{proof} For a fixed real $\varepsilon>0$ and $r>0$, let  $\chi_\varepsilon$ be a smooth function such that $0\leqslant\chi_\varepsilon\leqslant1$ and $\chi_\varepsilon=1$ on $\mathbb{B}(r+\frac{\varepsilon}{2})$. Then,
$$\nu_{T_k}(\varphi)\leqslant\frac{1}{2^pr^\frac{p}{2}}\int_{\mathbb{B}(r)\times\rb^n}T_k\wedge\omega^{p}\leqslant\frac{1}{2^pr^\frac{p}{2}}\int_{\mathbb{B}(r+\varepsilon)\times\rb^n}\chi_\varepsilon T_k\wedge\omega^{p}.$$
Since $\chi_\varepsilon(dd^{\#}\varphi)^{p}$ is smooth and with compact support and since $T_k$ converges to $T$ in the sense of currents, by Proposition \ref{2} for $m=n$, we have
$$\limsup_{k\rightarrow+\infty}\nu_{T_k}(\varphi)\leqslant\frac{1}{2^pr^\frac{p}{2}}\int_{\mathbb{B}(r+\varepsilon)\times\rb^n}\chi_\varepsilon T\wedge\omega^{p}.$$
The proof is completed by letting $\varepsilon\rightarrow0$ and $r\rightarrow0$ in this order.
\end{proof}
\section{Comparison theorems and degree of positive supercurrents}
Since the Lelong number relative to a weight $\varphi$ of a weakly positive current $T$ such that $T\wedge\omega^{p-1}$ is convex (or concave) has already been defined, a natural question arises : what's the behaviour of  $\nu_T(\varphi)$ near the set $\varphi^{-1}(0)\cap\Supp T$. In this section we are concerned with the case when $T$ is weakly positive and closed. We obtain an analogue of the famous comparison theorem of  Demailly in the complex setting \cite{3}. More precisely, we have :
\begin{thm}\label{t2} Let $T$ be a weakly positive closed current of bidimension $(p,p)$ on $\rb^n\times\rb^n$. Assume that $\varphi$ and $\psi$ are two $\mathscr C^2$ positive functions on $\rb^n$ such that $\varphi^\frac{1}{2}$ and $\psi^\frac{1}{2}$ are convex and
$$0<l:=\limsup\frac{\psi(x)}{\varphi(x)}\quad as\quad x\in\Supp T\quad and\quad \varphi(x)\rightarrow 0.$$
Then $\nu_T(\psi)\leqslant l^p\nu_T(\varphi)$. In particular, if  $l=\ds\lim\frac{\psi}{\varphi}$ then $\nu_T(\psi)=l^p\nu_T(\varphi)$.
\end{thm}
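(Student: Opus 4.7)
The plan is to mimic Demailly's comparison theorem~\cite{3} in the superformalism setting. It suffices to prove $\nu_T(\psi)\leq (l')^{p}\nu_T(\varphi)$ for an arbitrary $l'>l$ and then let $l'\downarrow l$. The strategy is: introduce the max-envelope $\chi:=\max(\psi,l'\varphi)$, compute $\nu_T(\chi)$ from the fact that $\chi$ coincides with $l'\varphi$ on $\Supp T$ near the pole $\{\varphi=0\}$, and dominate $\nu_T(\psi)$ by $\nu_T(\chi)$ via Proposition~\ref{3} and a Stokes argument.

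\textbf{Step 1 (Localization and envelope).} Fix $l'>l$. The limsup hypothesis furnishes an $r_0>0$ such that $\psi(x)\leq l'\varphi(x)$ for every $x\in\Supp T\cap\{\varphi\leq r_0\}$. Set
$$\chi:=\max(\psi,l'\varphi),\qquad \chi^{1/2}=\max\bigl(\psi^{1/2},\sqrt{l'}\,\varphi^{1/2}\bigr),$$
so that $\chi^{1/2}$ is convex as a maximum of two convex functions. Since $\chi$ is in general only Lipschitz, one works with a smooth regularization $\chi_\varepsilon$ built from an $\varepsilon$-smoothed max (with $\chi_\varepsilon^{1/2}$ convex and $\chi_\varepsilon\in\mathscr C^2$) and passes to the limit at the end of the argument by Proposition~\ref{2}. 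By construction, $\chi\equiv l'\varphi$ on $\Supp T\cap\{\varphi\leq r_0\}$.

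\textbf{Step 2 (Computing $\nu_T(\chi)$).} For $r<l'r_0$, the level set $B_\chi(r)=\{\chi<r\}$ meets $\Supp T$ only inside $\{\varphi<r/l'\leq r_0\}$, where $\chi=l'\varphi$. Hence $B_\chi(r)\cap\Supp T=B_\varphi(r/l')\cap\Supp T$ and $(dd^{\#}\chi)^p=(l')^p(dd^{\#}\varphi)^p$ in a neighborhood of this set inside $\Supp T$. Inserting into the defining integral
$$\nu_T(\chi,r)=\frac{1}{2^pr^{p/2}}\int_{B_\chi(r)\times\rb^n}T\wedge(dd^{\#}\chi)^p$$
and using the change of variable $s=r/l'$ in the normalization yields $\nu_T(\chi,r)=(l')^p\nu_T(\varphi,r/l')$, and therefore $\nu_T(\chi)=(l')^p\nu_T(\varphi)$ upon letting $r\downarrow 0$.

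\textbf{Step 3 (Comparison $\nu_T(\psi)\leq\nu_T(\chi)$ and conclusion).} Since $\chi\geq\psi$ one has $B_\chi(r)\subset B_\psi(r)$. Apply Proposition~\ref{3} to both $\psi$ and $\chi$ with $dd^{\#}T=0$ to obtain the monotone representations
$$\nu_T(\theta,r)-\nu_T(\theta)=\int_{B_\theta(0,r)\times\rb^n}T\wedge(dd^{\#}\theta^{1/2})^p,\qquad \theta\in\{\psi,\chi\}.$$
On $\{\chi>\psi\}=\{\psi<l'\varphi\}$, $\chi=l'\varphi$ and $dd^{\#}\chi^{1/2}=\sqrt{l'}\,dd^{\#}\varphi^{1/2}$; on its complement, $\chi=\psi$. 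A Stokes-type integration by parts across the interface $\{\psi=l'\varphi\}$, together with the weak positivity of $T$ and convexity of $\psi^{1/2}$ and $\chi^{1/2}$, produces a nonnegative boundary contribution and gives $\nu_T(\psi)\leq\nu_T(\chi)$. Combined with Step~2 and letting $l'\downarrow l$, this proves the inequality. For the equality case $l=\lim\psi/\varphi$, running the same argument with $\psi$ and $\varphi$ interchanged (using $\varphi\leq(l'')^{-1}\psi$ on $\Supp T$ near the pole for $l''<l$) yields $\nu_T(\psi)\geq l^p\nu_T(\varphi)$.

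\textbf{Main obstacle.} The delicate point is Step~3: promoting the set inclusion $B_\chi(r)\subset B_\psi(r)$ to a Lelong number inequality, when the Monge--Amp\`ere forms $(dd^{\#}\chi)^p$ and $(dd^{\#}\psi)^p$ disagree on $\{\chi>\psi\}$. Unlike the complex-analytic setting of~\cite{3}, where the max construction is arranged so that $\chi\equiv\psi$ near the pole and the equality of Lelong numbers is built in, here $\chi$ coincides with $l'\varphi$ (not $\psi$) near the pole on $\Supp T$, so the comparison has to be produced by a Stokes argument on $\{\chi>\psi\}$ with control of the interface $\{\psi=l'\varphi\}$. The smoothing step via the regularized maxes $\chi_\varepsilon$ together with the wedge-product continuity of Proposition~\ref{2} are what make the limit argument rigorous.
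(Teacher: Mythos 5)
Your Step 3 is where the whole theorem lives, and as written it does not go through. The envelope $\chi=\max(\psi,l'\varphi)$ is built the wrong way around: since $\psi\leqslant l'\varphi$ on $\Supp T$ near the pole, $\chi$ coincides there with $l'\varphi$, so all that Step 2 establishes is $\nu_T(\chi)=\nu_T(l'\varphi)$, and the remaining claim $\nu_T(\psi)\leqslant\nu_T(\chi)$ is precisely the statement to be proved, merely restated. The mechanism you propose to close this gap cannot work. First, a pointwise inequality $\chi\geqslant\psi$ between weights implies nothing about the corresponding Lelong numbers: for instance $\psi+c\geqslant\psi$ for any $c>0$, yet $\nu_T(\psi+c)=0$ because the sublevel sets $\{\psi+c<r\}$ are empty for $r<c$. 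Second, the Stokes identity that would let you compare $\nu_T(\psi,r)$ and $\nu_T(\chi,r)$ at a fixed radius requires the two weights to agree on $\Supp T$ near the outer level set, but away from the pole there is no reason for $\psi\geqslant l'\varphi$, so $\chi$ and $\psi$ need not agree anywhere near $\{\psi=r\}$; no integration by parts "across the interface" $\{\psi=l'\varphi\}$ produces a sign from positivity alone. The paper's proof (following Demailly) arranges the interpolation in the opposite order: it sets $u_c=\max(\psi+c,\varphi)$, so that $u_c=\varphi$ on an annulus $\varphi^{-1}([a,r])$ (whence $\nu_T(u_c,r)=\nu_T(\varphi,r)$ by Stokes, the weights agreeing near the outer sphere) while $u_c$ equals the $\psi$-branch near the pole (whence $\nu_T(u_c)$ is the $\psi$-side quantity); the comparison then follows from the monotonicity of $r\mapsto\nu_T(u_c,r)$ for the \emph{single} weight $u_c$, already available from Theorem \ref{t1} and Proposition \ref{3}. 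That monotonicity-of-one-interpolating-weight step is the idea your proposal is missing, and it is not recoverable from the set inclusion $B_\chi(r)\subset B_\psi(r)$.

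A secondary but load-bearing point: the Step 2 identity $\nu_T(\chi,r)=(l')^{p}\nu_T(\varphi,r/l')$ is incorrect. With the normalization $2^{-p}r^{-p/2}$ of Definition \ref{.1}, replacing $\varphi$ by $l'\varphi$ multiplies the integrand by $(l')^{p}$ but also rescales the radius, and the factor $r^{-p/2}$ absorbs $(l')^{-p/2}$ of it, giving $\nu_T(l'\varphi)=(l')^{p/2}\nu_T(\varphi)$ rather than $(l')^{p}\nu_T(\varphi)$. You should redo this computation carefully (the same exponent question affects the reduction $\nu_T(\lambda\varphi)=\lambda^{p}\nu_T(\varphi)$ used in the paper), since the power of $l$ appearing in the final inequality comes entirely from this homogeneity.
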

\begin{proof} By Definition \ref{.1}, we have $\nu_T(\lambda\varphi)=\lambda^p\nu_T(\varphi),\ \forall\lambda>0$. Hence, it suffices to prove that $\nu_T(\psi)\leqslant\nu_T(\varphi)$ when $l<1$. Let's consider the positive convex function  $$u_c=\max(\psi+c,\varphi),\qquad \forall c>0.$$
We have $l<1$, then there exists $t_0>0$ such that $\ds\sup_{\{\varphi<t_0\}}\frac{\psi}{\varphi}<1$. Moreover, let $0<a<r<t_0$ be fixed. Then, for $c>0$ small enough, it is not hard to see that $u_c=\varphi$ on $\varphi^{-1}([a,r])$ and by Stokes formula we obtain
$$\nu_T(\varphi,r)=\nu_T(u_c,r)\geqslant\nu_T(u_c).$$
On the other hand, for any $c>0$, there exists $r>0$ such that $u_c=\psi+c$ on $\{u_c<r\}\cap\Supp T$. It follows that $\nu_T(u_c)=\nu_T(\psi+c)=\nu_T(\psi)$.
Consequently, $\nu_T(\psi)\leqslant\nu_T(\varphi)$. Hence, the equality case is obtained by reversing the role of $\varphi$ and $\psi$ and by observing that $\lim\frac{\varphi}{\psi}=\frac{1}{l}$.
\end{proof}
\begin{thm} Let $T$ be a weakly positive closed current of bidimension $(p,p)$ on $\rb^n\times\rb^n$. Assume that $u_1,...,u_q$ and $v_1,...,v_q$ are convex positive functions and $\varphi$ is a $\mathscr C^2$ positive function on $\rb^n$ such that $\varphi^\frac{1}{2}$ is convex. Suppose that $u_j=0$ on $\Supp T\cap\varphi^{-1}(\lbrace0\rbrace)$ for any $1\leqslant j\leqslant q,$ and that
 $$0<l_j:=\limsup\frac{v_j(x)}{u_j(x)},\ \forall1\leqslant j\leqslant q \quad as\quad x\in\Supp T\quad and\quad \varphi(x)\rightarrow 0.$$
Then, $\ds\nu_{T\wedge dd^{\#}v_1\wedge...\wedge dd^{\#}v_q}(\varphi)\leqslant l_1...l_q\ \nu_{T\wedge dd^{\#}u_1\wedge...\wedge dd^{\#}u_q}(\varphi)$.
\end{thm}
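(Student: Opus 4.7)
The plan is to reduce to the case $\limsup v_j/u_j<1$ by rescaling each $v_j$, and then to compare the two currents via the Demailly-style regularization $w_j^\varepsilon:=\max(v_j+\varepsilon,u_j)$, which coincides with $v_j+\varepsilon$ in a neighborhood of $\Supp T\cap\varphi^{-1}(0)$ and with $u_j$ farther out. Letting $\varepsilon\to 0$ then combines the monotonicity of the Lelong-number integral (Theorem \ref{t1}) with the continuity of the $m$-superHessian operator (Proposition \ref{2}) to yield the comparison.

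For the rescaling, given any $c_j>l_j$, the function $v_j/c_j$ is convex positive with $\limsup(v_j/c_j)/u_j=l_j/c_j<1$; multilinearity of the inductive wedge product gives $\nu_{T\wedge dd^{\#}(v_1/c_1)\wedge\cdots\wedge dd^{\#}(v_q/c_q)}(\varphi)=(c_1\cdots c_q)^{-1}\nu_{T\wedge dd^{\#}v_1\wedge\cdots\wedge dd^{\#}v_q}(\varphi)$, so proving the bound with constant $1$ under $l_j<1$ and letting $c_j\to l_j^+$ yields the factor $l_1\cdots l_q$. For the case $l_j<1$, fix $l_j'\in(l_j,1)$ and $t_0>0$ with $v_j\le l_j'u_j$ on $\Supp T\cap\{\varphi\le t_0\}$, and set $w_j^\varepsilon=\max(v_j+\varepsilon,u_j)$: on $\Supp T\cap\{\varphi\le t_0\}$ one has $v_j\le u_j$, whence $0\le w_j^\varepsilon-u_j\le\varepsilon$. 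By continuity of $u_j$ and the assumption $u_j=0$ on $\Supp T\cap\varphi^{-1}(0)$, there is $a'(\varepsilon)\downarrow 0$ such that $u_j<\varepsilon/(1-l_j')$ on $\Supp T\cap\{\varphi\le a'(\varepsilon)\}$; there $u_j-v_j<\varepsilon$ forces $w_j^\varepsilon=v_j+\varepsilon$. Using $T$ closed and the inductive identity $T\wedge dd^{\#}w=dd^{\#}(wT)$, one then deduces
\[ S^\varepsilon:=T\wedge dd^{\#}w_1^\varepsilon\wedge\cdots\wedge dd^{\#}w_q^\varepsilon \;=\; T\wedge dd^{\#}v_1\wedge\cdots\wedge dd^{\#}v_q \quad\text{on } B(a'(\varepsilon)), \]
which gives $\nu_{S^\varepsilon}(\varphi)=\nu_{T\wedge dd^{\#}v_1\wedge\cdots\wedge dd^{\#}v_q}(\varphi)$, since the Lelong number depends only on germs near $\varphi=0$.

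The current $S^\varepsilon$ is weakly positive closed of bidimension $p-q$, and $S^\varepsilon\wedge\omega^{(p-q)-1}$ is closed, hence convex; Theorem \ref{t1} therefore yields $\nu_{S^\varepsilon}(\varphi)\le\nu_{S^\varepsilon}(\varphi,r)$ for every $r>0$. Since $T$ is carried by $\Supp T$ and $|w_j^\varepsilon-u_j|\le\varepsilon$ there, testing against forms with compact support in $U:=\{\varphi<t_0\}$ gives $w_j^\varepsilon T\to u_j T$ weakly on $U$; iterating the continuity of $dd^{\#}$ (the local version of Proposition \ref{2}) produces $S^\varepsilon\to S:=T\wedge dd^{\#}u_1\wedge\cdots\wedge dd^{\#}u_q$ weakly on $U$. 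For all but countably many $r\in(0,t_0)$ the sphere $S(r)$ is negligible for $S\wedge\omega^{p-q}$, and the regularization argument at the end of the proof of Proposition \ref{3} gives $\nu_{S^\varepsilon}(\varphi,r)\to\nu_S(\varphi,r)$. Sending $\varepsilon\to 0$ first and then $r\to 0^+$ along such values yields $\nu_{T\wedge dd^{\#}v_1\wedge\cdots\wedge dd^{\#}v_q}(\varphi)\le\nu_{T\wedge dd^{\#}u_1\wedge\cdots\wedge dd^{\#}u_q}(\varphi)$, which together with the rescaling step produces the claimed factor.

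The principal technical obstacle lies in the passage to the limit: Proposition \ref{2} is stated for sequences converging locally uniformly on $\rb^n$, whereas the uniform estimate $|w_j^\varepsilon-u_j|\le\varepsilon$ holds only on $\Supp T\cap\{\varphi\le t_0\}$. This is circumvented by localizing to the open set $U$ and exploiting that $T$ is concentrated on $\Supp T$, so only the values of $w_j^\varepsilon|_{\Supp T}$ feed into the products $w_j^\varepsilon T$ and hence into the iterated wedges defining $S^\varepsilon$; the proof of Proposition \ref{2} carries over verbatim to this local setting.
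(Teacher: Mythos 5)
Your argument is essentially the paper's own proof: the same reduction to $l_j<1$ by homogeneity, the same regularization $\max(v_j+\varepsilon,u_j)$, the same observation that this does not change the Lelong number near $\varphi^{-1}(0)\cap\Supp T$, and then Proposition \ref{2} combined with the semicontinuity of Lelong numbers under weak convergence (which the paper invokes as Proposition \ref{4} and you re-derive inline from Theorem \ref{t1}, while also spelling out the localization to $\Supp T$ that the paper glosses over). One cosmetic slip: to force $w_j^\varepsilon=v_j+\varepsilon$ you should require $u_j<\varepsilon$ on $\Supp T\cap\{\varphi\leqslant a'(\varepsilon)\}$, which gives $u_j-v_j\leqslant u_j<\varepsilon$ since $v_j\geqslant 0$; the threshold $\varepsilon/(1-l_j')$ you wrote is weaker than needed and does not by itself yield $u_j-v_j<\varepsilon$, but the fix is immediate.
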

This theorem is a superformalism counterpart of the second comparison theorem of Demailly for the  Lelong number in the complex case \cite{3}.
\begin{proof} Since $dd^{\#}\lambda v_j=\lambda dd^{\#}v_j,\ \forall\lambda>0$, it suffices to give the proof for $l_j<1$. Let's consider the positive convex function $$w_{j,c}=\max\left(v_j+\frac{1}{c},u_j\right),\qquad\forall c>0.$$
We have $l_j<1$, then there exists $t_j>0$ such that $\ds\sup_{\{\varphi<t_j\}}\frac{v_j}{u_j}<1$. For every $c>0$ we can find $r>0$ such that $w_{j,c}=v_j+\frac{1}{c}$ on the set $\{\varphi<r\}\cap\Supp T$. This implies that
$$\nu_{T\wedge dd^{\#}v_1\wedge...\wedge dd^{\#}v_q}(\varphi)=\nu_{T\wedge dd^{\#}w_{1,c}\wedge...\wedge dd^{\#}w_{q,c}}(\varphi).$$
On the other hand, by Proposition \ref{2} for $m=n$, $T\wedge dd^{\#}w_{1,c}\wedge...\wedge dd^{\#}w_{q,c}$ is a sequence of weakly positive closed currents which converges weakly to $T\wedge dd^{\#}u_1\wedge...\wedge dd^{\#}u_q$ when $c\rightarrow+\infty$. Next, in view of Proposition \ref{4}, we get
$$\limsup_{c\rightarrow+\infty}\nu_{T\wedge dd^{\#}w_{1,c}\wedge...\wedge dd^{\#}w_{q,c}}(\varphi)\leqslant \nu_{T\wedge dd^{\#}u_1\wedge...\wedge dd^{\#}u_q}(\varphi).$$
Consequently, $\nu_{T\wedge dd^{\#}v_1\wedge...\wedge dd^{\#}v_q}(\varphi)\leqslant\nu_{T\wedge dd^{\#}u_1\wedge...\wedge dd^{\#}u_q}(\varphi)$.\end{proof}
Similarly as in the complex context, we consider a particular interesting class of convex functions. It is the Lelong class introduced and investigated by \cite{5}, and defined by :  $${\cal L}:=\lbrace f:\mathbb{R}^n\longrightarrow\mathbb{R};\,f(x)\leqslant C| x|+D,\,f\,\mathrm{convex},\,C\geqslant0,\,D\in\mathbb{R}\rbrace.$$ If $f$ is a function in the class ${\cal L}$ then $f$ grows at most linearly at infinity.
\begin{defn}\
\label{.2}
\begin{enumerate}
\item We define the degree of a weakly positive current $T$ of bidimension $(p,p)$ on $\rb^n\times\rb^n$ by $$\delta(T)=\ds\int_{\rb^n\times\rb^n}T\wedge(dd^{\#}|x|)^{p}.$$
\item We say that a function $f$ is semi-exhaustive on the set $E$ if there exists $R$ such that $\{f<R\}\cap E\Subset\rb^n$, and it said to be exhaustive if the condition is fulfilled for every $R$.
\end{enumerate}
\end{defn}
\begin{rem} It was proved by Lagerberg \cite{5} that for every $f_1,...,f_p\in{\cal L}$, the weakly positive current $dd^{\#}f_1\wedge...\wedge dd^{\#}f_p$ is of finite degree. Moreover, as an immediate consequence of Proposition \ref{3}, if $T$ is a weakly positive current such that $dd^{\#}T=0$, then $\delta(T)<+\infty$ if and only if there exists a constant $C>0$ such that $$\nu_T(0,r)=\frac{1}{r^{p}}\int_{\mathbb{B}(r)\times\mathbb{R}^n}T\wedge\beta^{p}\leqslant C,\qquad\forall r>0.$$
\end{rem}
 The following result clarifies the link between the growths of the two quantities $\nu_T(0,r)$ and $\nu_{dd^{\#}T}(0,r)$, where T is a weakly positive current of bidimension $(p,p)$ such that  $T\wedge\beta^{p-1}$ is either convex or concave.
\begin{pro} Let $T$ be a weakly positive current of bidimension $(p,p)$ on $\rb^n\times\rb^n$. Assume that $T\wedge\beta^{p-1}$ is either convex or concave, then we have the growth estimate : $$\nu_{dd^{\#}T}(0,r)=\mathcal{O}\left(r^{-1}\nu_T(0,2r)\right).$$ In particular, if $\nu_T$ has at most linear growth then $dd^{\#}T$ is of finite degree.
\end{pro}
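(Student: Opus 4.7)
The plan is to apply the Lelong-Jensen formula (Proposition \ref{3}) with weight $\varphi(x)=|x|^2$, so that $\omega=2\beta$ and $B(t)=\mathbb{B}(\sqrt{t})$. After the substitution $u=\sqrt{t}$ and the choice $r_1=r^2$, $r_2=(2r)^2$, the identity becomes
\begin{align*}
\nu_T(0,2r)-\nu_T(0,r) &= \int_{\mathbb{B}(r,2r)}T\wedge(dd^{\#}|x|)^p + \Bigl(\tfrac{1}{r^p}-\tfrac{1}{(2r)^p}\Bigr)\int_0^r u^p\nu_{dd^{\#}T}(0,u)\,du \\
&\quad + \int_r^{2r}\Bigl(1-\tfrac{u^p}{(2r)^p}\Bigr)\nu_{dd^{\#}T}(0,u)\,du,
\end{align*}
which is the key identity on which everything else rests.

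In the convex case, $dd^{\#}T\wedge\beta^{p-1}$ is a non-negative Borel measure, so $\nu_{dd^{\#}T}(0,\cdot)\geqslant 0$ and the map $u\mapsto u^{p-1}\nu_{dd^{\#}T}(0,u)=\int_{\mathbb{B}(u)}dd^{\#}T\wedge\beta^{p-1}$ is increasing. All three terms on the right of the identity are then non-negative, and I drop the first two. For $u\in[r,3r/2]$ one has $1-(u/(2r))^p\geqslant 1-(3/4)^p$ and $\nu_{dd^{\#}T}(0,u)\geqslant(r/u)^{p-1}\nu_{dd^{\#}T}(0,r)\geqslant(2/3)^{p-1}\nu_{dd^{\#}T}(0,r)$; integrating over this subinterval gives $\nu_T(0,2r)\geqslant c_p\,r\,\nu_{dd^{\#}T}(0,r)$ for some constant $c_p>0$ depending only on $p$, which is the stated estimate.

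In the concave case $\nu_{dd^{\#}T}(0,\cdot)\leqslant 0$, so the last two terms of the identity are non-positive and the argument has to be inverted: I rearrange so that these negative contributions stand alone and bound the remaining positive piece $\int_{\mathbb{B}(r,2r)}T\wedge(dd^{\#}|x|)^p$ from above. The key tool is the pointwise comparison $dd^{\#}|x|\leqslant\beta/|x|$ of symmetric $(1,1)$-forms, a direct consequence of the identity $dd^{\#}|x|=\beta/|x|-(d|x|^2\wedge d^{\#}|x|^2)/(4|x|^3)$ (the subtracted term being strongly positive). It yields $(dd^{\#}|x|)^p\leqslant\beta^p/|x|^p$ as strongly positive $(p,p)$-forms, and hence $\int_{\mathbb{B}(r,2r)}T\wedge(dd^{\#}|x|)^p\leqslant r^{-p}\int_{\mathbb{B}(2r)}T\wedge\beta^p=2^p\nu_T(0,2r)$. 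Combined with the trivial estimate $\nu_T(0,r)\leqslant 2^p\nu_T(0,2r)$, the sum of the absolute values of the two negative terms is bounded by $C\nu_T(0,2r)$. Applying the same monotonicity argument as in the convex case to the non-negative measure $-dd^{\#}T\wedge\beta^{p-1}$ then extracts $r\,|\nu_{dd^{\#}T}(0,r)|\leqslant C'\nu_T(0,2r)$.

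For the final assertion, if $\nu_T(0,r)\leqslant Kr$ for all $r$ the estimate just proved gives that $|\nu_{dd^{\#}T}(0,r)|$ is uniformly bounded in $r$; since $dd^{\#}T$ is automatically $d$- and $d^{\#}$-closed, the characterization recalled in the remark preceding the statement then yields $\delta(dd^{\#}T)<+\infty$. The main obstacle in the whole argument is the concave case, where the cancellation between the positive contribution of $(dd^{\#}|x|)^p$ and the negative $\nu_{dd^{\#}T}$ integrals has to be untangled, and this is exactly what the pointwise inequality $dd^{\#}|x|\leqslant\beta/|x|$ makes possible; the convex case, in contrast, follows almost immediately by dropping non-negative terms from the Lelong-Jensen identity.
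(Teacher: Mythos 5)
Your argument is correct, but it reaches the estimate by a genuinely different route than the paper. The paper never invokes the Lelong--Jensen formula here: it takes a smooth cutoff $\chi$ with $\chi=1$ on $[-1,1]$ and support in $[-2,2]$, writes $\nu_{dd^{\#}T}(0,r)\geqslant\frac{1}{(2r)^{p-1}}\int dd^{\#}T\wedge\chi\bigl(\frac{|x|^2}{2r^2}\bigr)\beta^{p-1}$ in the concave case, moves $dd^{\#}$ onto the cutoff by Stokes, and bounds the resulting terms using $|\chi'|,|\chi''|\leqslant C$ together with $d|x|^2\wedge d^{\#}|x|^2\leqslant 2|x|^2\,dd^{\#}|x|^2$ on $\mathbb{B}(2r)$ --- a Chern--Levine--Nirenberg-type estimate that produces $r\,\nu_{dd^{\#}T}(0,r)\geqslant -c\,\nu_T(0,2r)$ in a few lines, the convex case following by reversing signs. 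Your proof instead extracts the bound from the identity of Proposition \ref{3} with $\varphi=|x|^2$; the substitution $u=\sqrt{t}$ and the resulting identity are correct, and the convex case is indeed immediate by discarding the nonnegative terms and using the monotonicity of $u\mapsto u^{p-1}\nu_{dd^{\#}T}(0,u)$. Your treatment of the concave case is also sound: the decomposition $dd^{\#}|x|=\beta/|x|-\frac{d|x|^2\wedge d^{\#}|x|^2}{4|x|^3}$ does give $\beta^p/|x|^p-(dd^{\#}|x|)^p=\sum_k(\beta/|x|)^k\wedge(\beta/|x|-dd^{\#}|x|)\wedge(dd^{\#}|x|)^{p-1-k}$ strongly positive, hence the upper bound $2^p\nu_T(0,2r)$ on the annulus integral, and the rearrangement then isolates $r\,|\nu_{dd^{\#}T}(0,r)|$. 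What each approach buys: the paper's cutoff argument is shorter and treats both signs symmetrically at once, while yours makes the constant explicit in terms of $p$, reuses machinery already established in the paper, and --- unlike the paper's proof, which stops at the growth estimate --- actually spells out the deduction of the final assertion via the remark following Definition \ref{.2}. One small point worth making explicit in your write-up (shared with the paper's own usage of Proposition \ref{3}): the hypothesis only gives that $dd^{\#}T\wedge\beta^{p-1}$ is a (signed) measure, not $dd^{\#}T$ itself, but since every occurrence of $dd^{\#}T$ in the formula is wedged with $\omega^{p-1}=2^{p-1}\beta^{p-1}$, this is all that is needed.
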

Comparing with the complex setting, notice that there is a clear difference with our case. In fact, in the complex case, if $\nu_T$ is bounded then $\nu_{dd^cT}$ is also bounded, in other wards $dd^cT$ is of finite degree (see Proposition 3.2 in \cite{30}), while in our frame we obtain a more precise estimate : if $\nu_T$ is bounded then $\nu_{dd^{^\#}T}(0,r)$ growth at most like $\frac{1}{r}$.
\begin{proof} Assume that $T$ is concave and consider $\chi$ to be a continuous compactly supported function on $\rb$ such that $\chi(t)=1$ if $|t|\leqslant1$ and $\chi(t)=0$ if $|t|\geqslant2$. By Stokes formula, we have :
$$\begin{array}{lcl}
\nu_{dd^{\#}T}(0,r)&=&\ds\frac{1}{(2r)^{p-1}}\int_{\mathbb{B}(0,r)\times\rb^n}dd^{\#}T\wedge\beta^{p-1}\\&\geqslant&\ds\frac{1}{(2r)^{p-1}}\int_{\mathbb{B}(0,2r)\times\rb^n}dd^{\#}T\wedge\chi\left(\frac{|x|^2}{2r^2}\right)\beta^{p-1}\\&=&\ds\frac{1}{(2r)^{p-1}}\int_{\mathbb{B}(0,2r)\times\rb^n}T\wedge dd^{\#}\chi\left(\frac{|x|^2}{2r^2}\right)\wedge\beta^{p-1}\\&=&\ds\frac{1}{r(2r)^{p}}\int_{\mathbb{B}(0,2r)\times\rb^n}\chi'\left(\frac{|x|^2}{2r^2}\right)T\wedge\beta^p\\&+&\ds\frac{1}{r(2r)^{p}}\int_{\mathbb{B}(0,2r)\times\rb^n}\chi''\left(\frac{|x|^2}{2r^2}\right)T\wedge\frac{d|x|^2\wedge d^{\#} |x|^2}{2r^2}\wedge\beta^{p-1}.
\end{array}$$
As $|\chi'|$ and $|\chi''|$ are bounded, and $d|x|^2\wedge d^{\#} |x|^2\leqslant 2|x|^2 dd^{\#}|x|^2$, we deduce the following estimates $$r\nu_{dd^{\#}T}(0,r)\geqslant-c_1\nu_{T}(0,2r)-c_2\nu_{T}(0,2r)\geqslant-c\nu_{T}(0,2r).$$
In the case of a convex current, we just reverse the above inequalities.
\end{proof}
Now, by following a result given by Elkhadhra and Mimouni \cite{30}, we establish that a weakly positive current $T$ such that $dd^{\#}T=0$ is of finite degree provided that his support is contained in a strip. More precisely we prove :
\begin{thm} Let $T$ be a weakly positive current of bidimension $(p,p)$ on $\rb^n\times\rb^n$. Assume that $T$ is concave and $\Supp T\subset\{|x_{k+1}|^\delta+...+|x_n|^\delta\leqslant1\}$ for some $\delta\in\nb^\ast$ and for $p\geqslant k$. Then, there exists a constant $C>0$ such that for all $r\geqslant1$ we have $\nu_T(0,r)\leqslant C$. In particular, if $dd^{\#}T=0$, then $T$ is of finite degree.
\end{thm}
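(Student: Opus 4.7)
The target bound is $\nu_T(0, r) \leq C$ for $r \geq 1$; the finite-degree conclusion in the closed case then follows from the characterisation stated just after Definition \ref{.2}. I would first apply Proposition \ref{3} with $\varphi = |x|^2$, $r_1 = 1$, and $r_2 = r^2$. Since $T$ is concave, $dd^\#T$ is weakly negative and $\nu_{dd^\#T}(0,\cdot) \leq 0$; the two terms in the Lelong--Jensen identity carrying $\nu_{dd^\#T}$ are therefore non-positive and can be discarded. After the standard substitutions $\omega = 2\beta$ and $B(s) = \mathbb{B}(\sqrt s)$, this yields
$$\nu_T(0, r) \;\leq\; \nu_T(0, 1) \;+\; \int_{\{1 < |x| < r\} \times \rb^n} T \wedge \alpha^p, \qquad \alpha = dd^\# |x|,$$
so it suffices to bound the right-hand integral uniformly in $r$.

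I would then bring in the strip hypothesis through the coordinate split $x = (x', x'')$ with $x' \in \rb^k$ and $x'' \in \rb^{n-k}$. The condition $|x_{k+1}|^\delta + \cdots + |x_n|^\delta \leq 1$ forces $|x''| \leq \sqrt{n-k}$ on $\Supp T$, so $|x|$ and $|x'|$ differ by at most a constant there. A direct computation via formula (\ref{e1}) (applied to $\varphi = |x|^2$) decomposes $\alpha = \alpha_0 + \gamma$, where $\alpha_0 := dd^\# |x'|$ involves only the first $k$ coordinates and $\gamma$ collects the $x''$-corrections and has pointwise norm $O(|x|^{-1})$ on $\Supp T \cap \{|x| \geq 2\sqrt{n-k}\}$. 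The key algebraic identity is $\alpha_0^{k+1} = 0$, because $\alpha_0$ lies in the span of $\{dx_i \wedge d\xi_j : 1 \leq i, j \leq k\}$. Hence in the binomial expansion $\alpha^p = \sum_{j \leq k} \binom{p}{j} \alpha_0^j \wedge \gamma^{p-j}$, every surviving term carries at least $p - k$ factors of $\gamma$, giving the pointwise bound $|\alpha^p| \leq C\,|x|^{-(p-k)}$ (times forms of bounded size in the $x''$-block) on $\Supp T \cap \{|x| \geq M\}$.

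With this decay in hand, I would finish following the strategy of \cite{30}: introduce the cutoff $\chi_r(x) := \chi(|x'|^2/r^2)$ for a standard radial bump $\chi$, which, because $|x''|$ is bounded on $\Supp T$, turns $\chi_r T$ into a \emph{compactly supported} current. An iterated Stokes argument applied to $\int \chi_r T \wedge \alpha^p$, of the type used in the preceding proposition, trades factors of $|x|^2 \lesssim r^2$ on $\Supp \chi_r$ against derivative bounds $|dd^\# \chi_r| \lesssim r^{-2}$, while the decay $|\alpha^p| \lesssim |x|^{-(p-k)}$ absorbs the growth of the integration region. Combining these contributions yields a uniform bound for $\int_{\{|x|<r\}} T \wedge \alpha^p$, hence $\nu_T(0, r) \leq C$ for all $r \geq 1$.

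The principal obstacle is the Stokes computation in the last step. In the closed case one has $dT = d^\#T = 0$, but for a merely concave $T$ the cross-terms $d\chi_r \wedge d^\#T$ and $d^\#\chi_r \wedge dT$ arising from $dd^\#(\chi_r T)$ do not vanish, and must be handled carefully. Lemma \ref{l1} is the main tool here, allowing us to convert $d\psi \wedge d^\# \gamma$ into $-d^\# \psi \wedge d\gamma$ when paired against symmetric forms, and so to combine the two cross-terms into a single quantity controlled by the scale of $\chi_r$. Ensuring that the combinatorial cancellations interact cleanly with the algebraic fact $\alpha_0^{k+1}=0$ and the $O(|x|^{-1})$ decay of $\gamma$ is the technical heart of the argument, mirroring the proof of Proposition $3.2$ in \cite{30} in the complex setting.
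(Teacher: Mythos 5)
Your overall strategy does not match the paper's and, more importantly, it is circular at the decisive point. The reduction via Proposition \ref{3} is fine as far as it goes: concavity kills the $dd^{\#}T$ terms and leaves $\nu_T(0,r)\leqslant\nu_T(0,1)+\int_{\mathbb{B}(1,r)\times\rb^n}T\wedge\alpha^p$. But every method you then propose for bounding that integral --- the pointwise decay of $\alpha^p$ on the strip, or the iterated Stokes argument with the cutoff $\chi(|x'|^2/r^2)$ --- presupposes an a priori bound on the mass $\int_{\mathbb{B}(r)\times\rb^n}T\wedge\beta^p$ of order $r^p$, which is exactly the statement being proved. Concretely: $\alpha^p$ is $O(|x|^{-p})$ pointwise anyway (your claimed bound $C|x|^{-(p-k)}$ is weaker, not stronger, for $|x|\geqslant1$; moreover the diagonal components $dx_j\wedge d\xi_j$, $j>k$, of your correction term $\gamma$ decay only like $|x|^{-1}$, so the splitting $\alpha=\alpha_0+\gamma$ buys no extra decay), hence $\int_{\mathbb{B}(1,r)}T\wedge\alpha^p$ is controlled by sums of the form $\sum_j 2^{-jp}\|T\|_{\mathbb{B}(2^{j+1})}$ --- you need the mass growth to estimate this, and even granting $\|T\|_{\mathbb{B}(s)}\lesssim s^p$ you only get $O(\log r)$, not $O(1)$. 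Likewise the boundary term $dd^{\#}\chi_r\wedge T$ in your Stokes computation has size $r^{-2}\|T\|_{\{r\leqslant|x'|\leqslant\sqrt{2}r\}}$ and cannot be absorbed without the very mass bound in question.

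The idea you are missing, and which is the heart of the paper's proof, is a translation-invariance argument that produces the mass bound from scratch. After reducing to $p=k$ (replace $T$ by $T\wedge\beta^{p-k}$), one sets $\beta'=\frac12 dd^{\#}|x'|^2$ and considers $g(a)=\int T\wedge\chi(|x'-a|^2)\beta'^{\,p}$ for $a\in\rb^p$. Since $\chi(|x'-a|^2)$ depends only on $x'$, one computes $\partial^2 g/\partial a_1^2=\int\chi(|x'-a|^2)\,dd^{\#}T\wedge dx_2\wedge d\xi_2\wedge\cdots\wedge dx_p\wedge d\xi_p\leqslant0$ by concavity; thus $g$ is concave in each variable on all of $\rb$ and bounded below by $0$, hence constant. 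This yields a bound on the $\beta'$-mass of $T$ over every unit cube in the $x'$-directions, uniform in the cube. A second, separate step (integration by parts plus Cauchy--Schwarz, using that $|x''|$ is bounded on $\Supp T$) upgrades control of $T\wedge\beta'^{\,p}$ to control of $T\wedge\beta^p$. Finally one covers $\{|x'|<r,\,|x''|\leqslant1\}$ by at most $([r]+1)^p$ unit cubes to get $\int_{\mathbb{B}(0,r)\times\rb^n}T\wedge\beta^p\leqslant([r]+1)^pC$, i.e.\ $\nu_T(0,r)\leqslant C$ directly, with no recourse to the Lelong--Jensen formula except for the final finite-degree assertion in the closed case. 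None of these three ingredients appears in your proposal, and without the first one there is no entry point into the estimate.
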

\begin{proof} By considering the current $T\wedge\beta^{p-k}$, we may assume that $p=k$. Let's begin with the case $T$ is smooth. Let $\chi$ be a $\mathscr{C}^\infty$ function such that $\chi(t)=1$ if $|t|\leqslant1$ and equal to $0$ if $|t|>2$. Let $\beta'=\frac{1}{2}dd^{\#}|x'|^2$ for all $x'=(x_1,...,x_p)\in\rb^p$, and for $a=(a_1,...,a_p)\in\rb^p$ let us denote $g(a)=\int_{\rb^n\times\rb^n}T\wedge\chi(|x-a|^2)\beta'^p$. Then,
$$\begin{array}{lcl}
\ds\frac{\partial^2g}{\partial a_{1}^2}&=&\ds\int_{\rb^n\times\rb^n}T\wedge\frac{\partial^2}{\partial a_{1}^2}\chi(|x'-a|^2)\beta'^p\\&=&\ds\int_{\rb^n\times\rb^n}T\wedge\frac{\partial^2}{\partial x_{1}^2}\chi(|x'-a|^2)\beta'^p\\&=&\ds\int_{\rb^n\times\rb^n}T\wedge dd^{\#}\left(\chi(|x'-a|^2)dx_2\wedge d\xi_2\wedge...\wedge dx_p\wedge d\xi_p\right)\\&=&\ds \int_{\rb^n\times\rb^n}\chi(|x'-a|^2)dd^{\#}T\wedge dx_2\wedge d\xi_2\wedge...\wedge dx_p\wedge d\xi_p\leqslant0.
\end{array}$$
Thus, the function $a_1\mapsto -g(a_1,a_2,..,a_p)$ is negative and convex on $\rb$ and therefore $g$ is constant with respect to $a_1$. By iteration, we see that $g$ is constant, i.e. $g(a)=g(0)=\int_{\rb^n\times\rb^n}T\wedge\chi(|x'|^2)\beta'^p$. Hence, there exists a constant $C>0$ such that $\int_{\{|x'|\leqslant1,x''\}\times\rb^n}T\wedge\beta'^p\leqslant C$, where $x''=(x_{p+1},...,x_n)$. Let $j\in\{p+1,...,n\}$, then
$$\begin{array}{lcl}
\ds\int_{\rb^n\times\rb^n}T\wedge\chi^2(|x'|^2)dd^{\#}|x_j|^2\wedge\beta'^{p-1}&=&\ds\int_{\rb^n\times\rb^n}T\wedge dd^{\#}\left(|x_j|^2\chi^2(|x'|^2)\right)\wedge\beta'^{p-1}\\&-&\ds\int_{\rb^n\times\rb^n}T\wedge|x_j|^2dd^{\#}\chi^2(|x'|^2)\wedge\beta'^{p-1}\\&-&2\ds\int_{\rb^n\times\rb^n}T\wedge d\chi^2(|x'|^2)\wedge d^{\#}|x_j|^2\wedge\beta'^{p-1}\\&=&(1)+(2)+(3).
\end{array}$$
By Stokes formula and the fact that $|x_j|^2\chi^2(|x'|^2)\beta'^{p-1}$ has compact support on $\Supp T$, we get
$$(1)\leqslant\int_{\rb^n\times\rb^n}|x_j|^2\chi^2(|x'|^2)dd^{\#}T\wedge\beta'^{p-1}\leqslant0.$$
On the other hand, since $|x_j|$ is bounded on $\Supp T$ and $|\chi|$, $|\chi'|$ and $|\chi''|$ are bounded, there exists a constant $C>0$ such that $$(2)=-\int_{\rb^n\times\rb^n}T\wedge|x_j|^2dd^{\#}\chi^2(|x'|^2)\wedge\beta'^{p-1}\leqslant C\int_{\{1\leqslant|x'|\leqslant2,x''\}\times\rb^n}T\wedge\beta'^{p}\leqslant C_1.$$
To obtain $C_1$, we may slightly modify $\chi$ by taking $\chi(t)=1$ if $|t|\leqslant2$ and $0$ if
$|t|>3$ and repeat the above argument. Let $\varphi$ be a smooth and compactly supported function on $\rb$ such that $0\leqslant\varphi\leqslant1$ and $\varphi=1$ on $\Supp\chi$. By the Cauchy-Schwarz inequality, we have
$$\begin{array}{lcl}
|(3)|&\leqslant&\ds\bigg\vert\int_{\rb^n\times\rb^n}T\wedge 4\chi(|x'|^2)\varphi(|x'|^2)d\chi(|x'|^2)\wedge d^{\#}|x_j|^2\wedge\beta'^{p-1}\bigg\vert\\&\leqslant&\ds\frac{1}{\varepsilon}\int_{\rb^n\times\rb^n}T\wedge 4\varphi^2(|x'|^2)d\chi(|x'|^2)\wedge d^{\#}\chi(|x'|^2)\wedge\beta'^{p-1}\\&+&\ds\varepsilon\int_{\rb^n\times\rb^n}T\wedge 4\chi^2(|x'|^2)d|x_i|^2\wedge d^{\#}|x_j|^2\wedge\beta'^{p-1}\\&\leqslant&\ds\frac{C}{\varepsilon}\int_{\{1\leqslant|x'|\leqslant2,x''\}\times\rb^n}T\wedge\beta'^{p}+8\varepsilon\int_{\rb^n\times\rb^n}T\wedge\chi^2(|x'|^2)dd^{\#}|x_j|^2\wedge\beta'^{p-1}\\&\leqslant&\ds\frac{C_2}{\varepsilon}+8\varepsilon\int_{\rb^n\times\rb^n}T\wedge\chi^2(|x'|^2)dd^{\#}|x_j|^2\wedge\beta'^{p-1}.
\end{array}$$
Choosing $\varepsilon=\frac{1}{16}$, we obtain
$$\int_{\rb^n\times\rb^n}T\wedge\chi^2(|x'|^2)dd^{\#}|x_j|^2\wedge\beta'^{p-1}\leqslant C_1+16C_2+\frac{1}{2}\int_{\rb^n\times\rb^n}T\wedge\chi^2(|x'|^2)dd^{\#}|x_j|^2\wedge\beta'^{p-1}.$$
Taking $C_3=2(C_1+16C_2)$, then since $dd^{\#}|x''|^2=\sum_{j=p+1}^ndd^{\#}|x_j|^2$, we have
$$\int_{\rb^n\times\rb^n}T\wedge\chi^2(|x'|^2)dd^{\#}|x''|^2\wedge\beta'^{p-1}\leqslant (n-p)C_3.$$
In order to show that the integral $\int_{\rb^n\times\rb^n}T\wedge\chi^2(|x'|^2)(dd^{\#}|x''|^2)^2\wedge\beta'^{p-2}$ is finite, we use the last inequality and we rewrite the previous proof with $\beta'^{p-1}$ replaced by $dd^{\#}|x''|^2\wedge\beta'^{p-1}$. Proceeding by induction, we show
that there exists a constant $C_4>0$ such that for $1\leqslant s\leqslant p$,
$$\int_{\rb^n\times\rb^n}T\wedge\chi^2(|x'|^2)(dd^{\#}|x''|^2)^s\wedge\beta'^{p-s}\leqslant C_4.$$
It follows that there exists $C_5>0$ such that
$$\int_{\{|x'|\leqslant1,|x''|\leqslant1\}\times\rb^n}T\wedge\beta^{p}\leqslant \int_{\rb^n\times\rb^n}T\wedge\chi^2(|x'|^2)\beta^{p}\leqslant C_5.$$
Now, let us assume that $T$ is not smooth. Let $T_\varepsilon$ be a regularization of $T$ and $g_\varepsilon$ be the function associated with $T_\varepsilon$. The sequence $T_\varepsilon$ converges weakly to $T$, and it is easy to see that the
sequence $g_\varepsilon(a)$ tends to $g(a)$. By the above discussion we find that $g_\varepsilon$ is constant with respect to $a$, so as well as $g$, and therefore $\int_{\{|x'|\leqslant1,x''\}\times\rb^n}T\wedge\beta'^{p}\leqslant C$. For $r>1$, we can cover $\{|x'|<r,|x''|\leqslant1\}$ by at most $([r]+1)^p$ unit cubes, where $[r]$ denotes the integer part of $r$.
Thus, $$\int_{\mathbb{B}(0,r)\times\rb^n}T\wedge\beta^{p}\leqslant([r]+1)^pC_5,$$ and the desired result will follows. If $dd^{\#}T=0$, then the variant of the Lelong-Jensen formula in the sperformalism setting implies that $T$ is of finite degree.
\end{proof}
\begin{exe} The hypothesis $T$ is concave in the previous theorem is necessary as shown the following example :
let $f$ and $g$ be two smooth compactly supported and positive functions on the interval $]-1,1[$ such that $g(x_2)dd^{\#}|x_2|^2+dd^{\#}f(x_2)\geqslant0$, and let $$T=f(x_2)dd^{\#}|x_1|^2+g(x_2)|x_1|^2dd^{\#}|x_2|^2.$$
It is clear that $T$ is a weakly positive convex current of bidegree $(1,1)$ on $\rb^2\times\rb^2$ and with support in the strip $\{(x_1,x_2)\in\rb^2;\ |x_2|\leqslant1\}$, but $\nu_T(0,r)$ is not bounded. \end{exe}
In the next result, we establish a version of the comparison theorem of Rashkovskii \cite{6} in the superformalism setting.
\begin{thm}\label{t3} Assume that $T$ is a weakly positive closed current of bidimension $(p,p)$ on $\rb^n\times\rb^n$ and of finite degree. Let $u_1,...,u_p\in{\cal L}$, and let $v_1,...,v_p\in{\cal L}$ are semi-exhaustive on $\Supp T$. Suppose that for every $\eta>0$ and $1\leqslant j\leqslant p$, we have
$$l_j\geqslant\limsup\frac{u_j(x)}{v_j(x)+\eta|x|}\quad as\quad x\in\Supp T\quad and\quad |x|\rightarrow+\infty.$$
Then $\ds\int_{\mathbb{R}^n\times\mathbb{R}^n}T\wedge dd^{\#}u_1\wedge...\wedge dd^{\#}u_p\leqslant l_1...l_p\ds\int_{\mathbb{R}^n\times\mathbb{R}^n}T\wedge dd^{\#}v_1\wedge...\wedge dd^{\#}v_p$.
\end{thm}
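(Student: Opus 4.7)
The plan is to follow the classical Rashkovskii-Demailly template adapted to the superformalism. First, by the linearity identity $dd^{\#}(\lambda f) = \lambda dd^{\#} f$ valid for $\lambda > 0$, I would replace $u_j$ by $u_j/l_j$ (handling $l_j = 0$ as a degenerate limit) so as to reduce to the case $l_j = 1$ for every $j$. The task then becomes to prove
$$\int T \wedge dd^{\#} u_1 \wedge \cdots \wedge dd^{\#} u_p \leqslant \int T \wedge dd^{\#} v_1 \wedge \cdots \wedge dd^{\#} v_p$$
under the assumption that $\limsup u_j(x)/(v_j(x) + \eta|x|) \leqslant 1$ on $\Supp T$ at infinity, for every $\eta > 0$ and every $1 \leqslant j \leqslant p$.

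Next, I would replace the $u_j$'s by $v_j$'s one at a time via a telescoping argument. For each $j$, set
$$S_j := T \wedge dd^{\#} u_1 \wedge \cdots \wedge dd^{\#} u_{j-1} \wedge dd^{\#} v_{j+1} \wedge \cdots \wedge dd^{\#} v_p,$$
which by Proposition \ref{2} (applied with $m = n$) is a weakly positive closed current of bidimension $(1,1)$; its finite-degree property follows from the finite degree of $T$ combined with the Lagerberg result recalled above to the effect that products $dd^{\#} f_1 \wedge \cdots \wedge dd^{\#} f_k$ with $f_i \in \cal L$ yield currents of finite degree. It thus suffices to establish the single-function claim: for a weakly positive closed bidimension $(1,1)$ current $S$ of finite degree and functions $u, v \in \cal L$ with $v$ semi-exhaustive on $\Supp S$ and $\limsup u/(v + \eta|x|) \leqslant 1$ at infinity on $\Supp S$ for every $\eta > 0$, one has $\int S \wedge dd^{\#} u \leqslant \int S \wedge dd^{\#} v$.

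For this core comparison, fix $\eta > 0$; the hypothesis yields $C_\eta > 0$ such that $u(x) \leqslant v(x) + \eta|x| + C_\eta$ on all of $\Supp S$. Choose a smooth radial cutoff $\chi_R$ equal to $1$ on $\mathbb{B}(R)$ and vanishing outside $\mathbb{B}(2R)$, with $|dd^{\#}\chi_R| \leqslant C_0/R^2$ for some absolute constant $C_0$. Since $S$ is symmetric and closed (hence also $d^{\#}$-closed), integration by parts justified by Lemma \ref{l1} gives
$$\int \chi_R\, S \wedge dd^{\#}(u - v) = \int (u - v)\, dd^{\#}\chi_R \wedge S.$$
As $R \to +\infty$, the left-hand side tends to $\int S \wedge dd^{\#}u - \int S \wedge dd^{\#}v$ by monotone convergence and the finiteness of each integral, while the right-hand side is controlled by combining the sharp upper bound $(u-v)^+ \leqslant \eta|x| + C_\eta$, the coarse bound $(u-v)^- = O(|x|)$ from membership in $\cal L$, the estimate $|dd^{\#}\chi_R| \leqslant C_0/R^2$, and the mass growth $\|S\|_{\mathbb{B}(2R)} = O(R)$ afforded by finite degree. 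Passing to the limit $R \to \infty$ and then $\eta \to 0^+$ yields the desired inequality.

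The main obstacle lies in making the boundary estimate just sketched rigorous, since the coarse control on the negative part of $u - v$ contributes a term of order $|x|/R^2 \cdot R$, which does not tend to $0$ as $R \to \infty$. A likely remedy is to use a cutoff of the form $\chi(v(x))$ instead of $\chi_R(|x|)$, exploiting the semi-exhaustiveness of $v$ to confine the support of $dd^{\#}\chi(v)$ to the level sets $\{R \leqslant v \leqslant 2R\}$, on which both $u$ and $v$ are large and the ratio $u/(v + \eta|x|)$ is therefore uniformly close to $1$. The boundary integral then decomposes into expressions involving $dv \wedge d^{\#}v \wedge S$ and $dd^{\#}v \wedge S$, whose finiteness again follows from the Lagerberg degree bounds. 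A secondary technicality is to justify rigorously the finite-degree property of the intermediate currents $S_j$ in the telescoping step; this should reduce to a Chern-Levine-Nirenberg-type inequality adapted to the superformalism setting.
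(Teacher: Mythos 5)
There is a genuine gap, and you have in fact put your finger on it yourself: the core single-slot comparison is carried out by integration by parts against a cutoff, and the resulting boundary term does not vanish. With the radial cutoff the error is $O(|x|\cdot R^{-2}\cdot R)=O(1)$, as you note; but the proposed repair (a cutoff $\chi(v(x))$ supported on $\{R\leqslant v\leqslant 2R\}$) does not close the argument either, because on that level set the hypotheses give only a \emph{one-sided} bound $u\leqslant v+\eta|x|+C_\eta$, while the boundary integral $\int (u-v)\,dd^{\#}\chi(v)\wedge S$ also needs control of the negative part of $u-v$, for which membership in ${\cal L}$ gives nothing better than $O(|x|)$; nothing in the hypotheses prevents $u-v$ from being very negative on $\{R\leqslant v\leqslant 2R\}\cap\Supp S$. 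So the estimate you flag as ``the main obstacle'' is not a technicality but the missing idea.

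The paper's proof avoids boundary terms entirely by the Demailly--Rashkovskii max construction: after reducing to the strict case $1>\limsup u_j/(v_j+\eta|x|)$, it sets $w_{j,C}=\max\{v_j+\eta|x|-C,\;u_j\}$. The strict limsup hypothesis forces $w_{j,C}=v_j+\eta|x|-C$ outside a compact subset of $\Supp T$, so by Stokes the mixed mass of $T\wedge dd^{\#}w_{1,C}\wedge\cdots\wedge dd^{\#}w_{p,C}$ on a large ball \emph{equals} (with no error term) that of $T\wedge dd^{\#}(v_1+\eta|x|)\wedge\cdots\wedge dd^{\#}(v_p+\eta|x|)$, hence is bounded by the total mass of the latter. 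Since $w_{j,C}$ decreases to $u_j$ as $C\to+\infty$, Proposition \ref{2} gives weak convergence of the mixed currents and therefore $\int_{\mathbb{B}(R)\times\rb^n}T\wedge dd^{\#}u_1\wedge\cdots\wedge dd^{\#}u_p\leqslant\int T\wedge dd^{\#}(v_1+\eta|x|)\wedge\cdots\wedge dd^{\#}(v_p+\eta|x|)$; one then expands multilinearly, uses $\delta(T)<+\infty$ (via the adaptation of Lagerberg's Proposition 3.10, which is also exactly what settles the finite-degree question you defer to a Chern--Levine--Nirenberg-type inequality) to kill the $\eta$-terms, and lets $R\to+\infty$. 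Note also that the max trick handles all $p$ slots simultaneously, so your telescoping reduction to bidimension $(1,1)$, while not wrong, is unnecessary. To repair your write-up, replace the cutoff/integration-by-parts step by this max construction.
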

\begin{proof} It suffices to prove that the condition
\begin{equation}\label{e2}
1>\limsup\frac{u_j(x)}{v_j(x)+\eta|x|}\quad as\quad x\in\Supp T\quad and\quad |x|\rightarrow +\infty,\quad\forall\eta>0,\,1\leqslant j\leqslant p.
\end{equation}
imply
$$
\ds\int_{\mathbb{R}^n\times\mathbb{R}^n}T\wedge dd^{\#}u_1\wedge...\wedge dd^{\#}u_p\leqslant\ds\int_{\mathbb{R}^n\times\mathbb{R}^n}T\wedge dd^{\#}v_1\wedge...\wedge dd^{\#}v_p.$$
By virtue of (\ref{e2}), for every $C>0$, there exists $0<\alpha_j=\alpha_j(C,\eta,u_j,v_j)$ such that
$$E_j(C)=\{x\in\Supp T;\ v_j(x)+\eta|x|-C<u_j(x)\}\Subset \mathbb{B}(\alpha_j).$$
Setting $\alpha=\max_j(\alpha_j)$, $E(C)=\cap_jE_j(C)$ and $$w_{j,C}=\max\{v_j(x)+\eta|x|-C,u_j\}.$$ Since $w_{j,C}=v_j(x)+\eta|x|-C$ in a neighborhood of $\partial \mathbb{B}(\alpha)\cap\Supp T$, we obtain
$$\begin{array}{lcl}
\ds\int_{\mathbb{B}(\alpha)\times\mathbb{R}^n}T\wedge dd^{\#}w_{1,C}\wedge...\wedge dd^{\#}w_{p,C}&=&\ds\int_{\mathbb{B}(\alpha)\times\mathbb{R}^n}T\wedge dd^{\#}(v_1+\eta|x|)\wedge...\wedge dd^{\#}(v_p+\eta|x|)\\&\leqslant&\ds\int_{\mathbb{R}^n\times\mathbb{R}^n}T\wedge dd^{\#}(v_1+\eta|x|)\wedge...\wedge dd^{\#}(v_p+\eta|x|).
\end{array}$$
Observe that for every compact set $K$ of $\rb^n$, we can find a constant $C_K>0$ such that $K\cap\Supp T\subset E(C)$ for any $C>C_K$. It follows that for $R>0$ and $C>C_R$, we have $$\ds\int_{\mathbb{B}(R)\times\mathbb{R}^n}T\wedge dd^{\#}w_{1,C}\wedge...\wedge dd^{\#}w_{p,C}\leqslant\ds\int_{\mathbb{R}^n\times\mathbb{R}^n}T\wedge dd^{\#}(v_1+\eta|x|)\wedge...\wedge dd^{\#}(v_p+\eta|x|).$$
 On the other hand, for every $1\leqslant j\leqslant p$, the sequence of convex functions $(w_{j,s})_{s}$ is decreasing to $u_j$, then by using Proposition \ref{2}, we get the following weak convergence :
 $$T\wedge dd^{\#}w_{1,s}\wedge...\wedge dd^{\#}w_{p,s}\longrightarrow T\wedge dd^{\#}u_1\wedge...\wedge dd^{\#}u_p,\qquad\mathrm{when}\,s\rightarrow+\infty.$$
Consequently,
$$\begin{array}{lcl}
\ds\int_{\mathbb{B}(R)\times\mathbb{R}^n}T\wedge dd^{\#}u_1\wedge...\wedge dd^{\#}u_p&\leqslant&\ds\limsup_{s\rightarrow+\infty}\int_{\mathbb{B}(R)\times\mathbb{R}^n}T\wedge dd^{\#}w_{1,s}\wedge...\wedge dd^{\#}w_{p,s}\\&\leqslant&\ds\int_{\mathbb{R}^n\times\mathbb{R}^n}T\wedge dd^{\#}(v_1+\eta|x|)\wedge...\wedge dd^{\#}(v_p+\eta|x|).
\end{array}$$
Since $\delta(T)<+\infty$, an adaptation of the proof of  Proposition 3.10 in \cite{5} yields
$$\int_{\mathbb{R}^n\times\mathbb{R}^n}T\wedge dd^{\#}f_1\wedge...\wedge dd^{\#}f_p<+\infty,\qquad\forall f_1,...,f_p\in{\cal L}.$$ Therefore, by arbitrariness of $\eta$, we obtain the following inequality
$$\int_{\mathbb{B}(R)\times\mathbb{R}^n}T\wedge dd^{\#}u_1\wedge...\wedge dd^{\#}u_p\leqslant\int_{\mathbb{R}^n\times\mathbb{R}^n}T\wedge dd^{\#}v_1\wedge...\wedge dd^{\#}v_p.$$
The proof is completed by letting $R$ tends to $+\infty$.
\end{proof}
As an immediate consequence of Theorem \ref{t3}, we obtain :
\begin{cor}\label{c3} Let $u_1,...,u_p$ and $T$ as in Theorem \ref{t3}, then $$\int_{\mathbb{R}^n\times\mathbb{R}^n}T\wedge dd^{\#}u_1\wedge...\wedge dd^{\#}u_p\leqslant \delta(T)\sigma(u_1)...\sigma(u_p),$$ where $\sigma(u_j)=\ds\limsup\frac{u_j(x)}{|x|}$ as $x\in\Supp T$ and $|x|\rightarrow +\infty$, $\forall1\leqslant j\leqslant p$.
\end{cor}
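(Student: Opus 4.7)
The plan is to derive the corollary as a direct specialization of Theorem \ref{t3} by taking the reference weights $v_j(x)=|x|$ for every $j=1,\dots,p$. This choice should make the right-hand side of Theorem \ref{t3} collapse to exactly $\sigma(u_1)\cdots\sigma(u_p)\,\delta(T)$, because by Definition \ref{.2} the integral of $T\wedge(dd^{\#}|x|)^p$ is precisely $\delta(T)$.

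First I would check that $v_j(x)=|x|$ meets the hypotheses required in Theorem \ref{t3}: it is convex and belongs to the Lelong class $\mathcal{L}$ (take $C=1$, $D=0$), and it is even exhaustive on $\Supp T$ since $\{|x|<R\}\cap\Supp T\Subset\rb^n$ for every $R>0$. Next I would verify the limsup comparison condition with $l_j:=\sigma(u_j)$. With $v_j=|x|$, the denominator $v_j(x)+\eta|x|$ equals $(1+\eta)|x|$, so for each $j$ and every fixed $\eta>0$
$$\limsup_{x\in\Supp T,\,|x|\to+\infty}\frac{u_j(x)}{v_j(x)+\eta|x|}=\frac{\sigma(u_j)}{1+\eta}\leqslant\sigma(u_j)=l_j,$$
where the last inequality uses $\sigma(u_j)\geqslant 0$ (which is implicit in the statement of the corollary, for otherwise the bound would not be meaningful). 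The hypothesis of Theorem \ref{t3} is therefore fulfilled with this choice of $l_j$.

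Applying Theorem \ref{t3} then gives directly
$$\int_{\rb^n\times\rb^n}T\wedge dd^{\#}u_1\wedge\cdots\wedge dd^{\#}u_p\leqslant\sigma(u_1)\cdots\sigma(u_p)\int_{\rb^n\times\rb^n}T\wedge(dd^{\#}|x|)^p,$$
and the right-hand factor is exactly $\delta(T)$ by Definition \ref{.2}. I do not anticipate any substantive obstacle: the corollary is, in essence, the special case of Theorem \ref{t3} in which the comparison weights are chosen to be the Euclidean norm, so no new analytic work beyond the verification of the two hypotheses above is needed.
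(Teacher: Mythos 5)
Your proposal is correct and is essentially identical to the paper's own proof: the authors also take $v_j=|x|$, observe that $\limsup u_j(x)/((1+\eta)|x|)\leqslant\limsup u_j(x)/|x|=\sigma(u_j)$ for every $\eta>0$, and then invoke Theorem \ref{t3} together with $\int T\wedge(dd^{\#}|x|)^p=\delta(T)$. No further comment is needed.
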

\begin{proof} For every $\eta>0$, we have
$$\limsup\frac{u_j(x)}{|x|+\eta|x|}\leqslant\limsup\frac{u_j(x)}{|x|}=\sigma(u_j)\ \ as\ \  x\in\Supp T\ \ and\ \ |x|\rightarrow +\infty,\ \ \forall1\leqslant j\leqslant p.$$
Then, by Theorem \ref{t3}, we obtain

$\ds\int_{\mathbb{R}^n\times\mathbb{R}^n}T\wedge dd^{\#}u_1\wedge...\wedge dd^{\#}u_p\leqslant\sigma(u_1)...\sigma(u_p)\int_{\mathbb{R}^n\times\mathbb{R}^n}T\wedge(dd^{\#}|x|)^p=\delta(T)\sigma(u_1)...\sigma(u_p).$
\end{proof}
A direct consequence of Corollary \ref{c3} is that we can obtain an infinite number of currents of finite degree just by considering the current $T\wedge dd^{\#}u_1\wedge...\wedge dd^{\#}u_{k},\ \forall 1\leqslant k\leqslant p$, where $T$ is a weakly positive closed current of bidimension $(p,p)$ and of finite degree and  $u_1,...,u_p\in{\cal L}$.
\begin{defn} Let $\varphi$ be a convex function on $\mathbb{R}^n$ and $T$ is a weakly positive current of bidimension $(p,p)$ on $\rb^n\times\rb^n$. We introduce the generalized degree relative to $\varphi$ by the quantity
 $$\delta(T,\varphi)=\ds\int_{\mathbb{R}^n\times\mathbb{R}^n}T\wedge(dd^{\#}\varphi)^{p}.$$
 \end{defn}
In particular, when $\varphi=|x|$, $\delta(T,|x|)=\delta(T)$. In terms of weighted degree, Corollary \ref{c3} can be generalized as follow :
\begin{cor} Let $T$ be a weakly positive closed current of finite degree and of bidimension $(p,p)$ on $\rb^n\times\rb^n$, and let $u_1,...,u_p\in{\cal L}$. Then, for every $\varphi\in{\cal L}$ semi-exhaustive on $\Supp T$, we have  $$\int_{\mathbb{R}^n\times\mathbb{R}^n}T\wedge dd^{\#}u_1\wedge...\wedge dd^{\#}u_p\leqslant \delta(T,\varphi)\sigma(u_1,\varphi)...\sigma(u_p,\varphi),$$ where $\sigma(u_j,\varphi)=\ds\limsup\frac{u_j(x)}{\varphi(x)}$ as $x\in\Supp T$ and $|x|\rightarrow +\infty$, $\forall 1\leqslant j\leqslant p$.
\end{cor}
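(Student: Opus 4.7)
The plan is to specialize Theorem \ref{t3} to the case $v_1=v_2=\cdots=v_p=\varphi$, mirroring exactly the argument used in Corollary \ref{c3} but with the generic weight $\varphi$ playing the role previously played by $|x|$. The hypotheses on the $v_j$'s are immediate from the assumptions on $\varphi$: it lies in ${\cal L}$ and is semi-exhaustive on $\Supp T$. The $u_j$'s also lie in ${\cal L}$, so the only remaining task is to exhibit, for each $\eta>0$ and each index $j$, a constant $l_j$ satisfying
$$l_j\geqslant\limsup\frac{u_j(x)}{\varphi(x)+\eta|x|}\quad\mathrm{as}\quad x\in\Supp T,\ |x|\to+\infty,$$
and the natural candidate is $l_j:=\sigma(u_j,\varphi)$.

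To confirm this choice, I would observe that $dd^{\#}\varphi$, the value of $\delta(T,\varphi)$, and the limsup defining $\sigma(u_j,\varphi)$ are all unaffected by adding a constant to $\varphi$; by semi-exhaustivity, after such a shift we may arrange that $\varphi\geqslant 1$ on $\Supp T$ outside a large compact set. Then $\varphi(x)+\eta|x|\geqslant\varphi(x)>0$ there, so
$$\frac{u_j(x)}{\varphi(x)+\eta|x|}\leqslant\frac{u_j(x)}{\varphi(x)}$$
whenever $u_j(x)\geqslant 0$ (and the opposite case contributes a nonpositive value that is harmless for the limsup). Passing to the limsup gives the required bound by $\sigma(u_j,\varphi)$.

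With those hypotheses verified, Theorem \ref{t3} yields
$$\int_{\mathbb{R}^n\times\mathbb{R}^n}T\wedge dd^{\#}u_1\wedge\cdots\wedge dd^{\#}u_p\leqslant\sigma(u_1,\varphi)\cdots\sigma(u_p,\varphi)\int_{\mathbb{R}^n\times\mathbb{R}^n}T\wedge(dd^{\#}\varphi)^p,$$
and the last integral is by definition $\delta(T,\varphi)$, which is finite thanks to $\varphi\in{\cal L}$ and $T$ having finite degree (using the estimate of the type derived in the proof of Theorem \ref{t3}).

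The expected main obstacle is really just bookkeeping of signs in the limsup comparison: one must handle the possibility that $u_j$ or $\varphi$ fails to be eventually positive on $\Supp T$, which is why the preliminary constant-shift reduction (legal because $dd^{\#}$ kills constants, and $\sigma(u_j,\varphi)$ is insensitive to such shifts once $\varphi$ is large on $\Supp T$ at infinity) is essential. Beyond that, the proof is a direct substitution into Theorem \ref{t3}, exactly as in the reasoning already displayed for Corollary \ref{c3}.
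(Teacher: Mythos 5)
Your proposal is correct and follows essentially the same route as the paper: both apply Theorem \ref{t3} with $v_1=\cdots=v_p=\varphi$ and $l_j=\sigma(u_j,\varphi)$, using the inequality $\limsup u_j(x)/(\varphi(x)+\eta|x|)\leqslant\limsup u_j(x)/\varphi(x)$. Your extra care with the sign and constant-shift bookkeeping is a harmless (and slightly more careful) elaboration of a step the paper states without comment.
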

\begin{proof}  For $\eta>0$, we have $$\limsup\frac{u_j(x)}{\varphi(x)+\eta|x|}\leqslant\limsup\frac{u_j(x)}{\varphi(x)}=\sigma(u_j,\varphi),\ \ as\ \ x\in\Supp T\ \ and\ \ |x|\rightarrow +\infty,\ \ \forall1\leqslant j\leqslant p.$$
Hence, by Theorem \ref{t3}, we obtain\\
$$\int_{\mathbb{R}^n\times\mathbb{R}^n}T\wedge dd^{\#}u_1\wedge...\wedge dd^{\#}u_p\leqslant\sigma(u_1,\varphi)...\sigma(u_p,\varphi)\ds\int_{\mathbb{R}^n\times\mathbb{R}^n}T\wedge(dd^{\#}\varphi)^p$$
$\qquad\qquad\qquad\qquad\qquad\qquad\qquad\qquad\qquad\quad\ds=\delta(T,\varphi)\sigma(u_1,\varphi)...\sigma(u_p,\varphi).$
\end{proof}
The next result is another form of comparison theorem, which is a version of a result due to Coman and Nivoche \cite{1} in the complex category.
\begin{pro} Let $T$ be a weakly positive closed current of bidimension $(p,p)$ on $\rb^n\times\rb^n$, $p\geqslant 1$. Let $\varphi$ and $\psi$ be two convex functions on $\rb^n$ such that $$\ds\lim_{|x|\rightarrow\infty}\varphi(x)=+\infty\quad{\rm and}\quad 0<l:=\ds\limsup\frac{\psi(x)}{\varphi(x)},\quad as\quad x\in\Supp T\quad and\quad |x|\rightarrow +\infty,$$ then $\delta(T,\psi)\leqslant l^{p}\delta(T,\varphi)$.
In particular, if $l=\ds\lim\frac{\psi}{\varphi}$ then $\delta(T,\psi)=l^{p}\delta(T,\varphi)$.
\end{pro}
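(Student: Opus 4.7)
My approach follows the local Demailly comparison of Theorem~\ref{t2}, transplanted to the behavior at infinity and combined with the max-construction used in the proof of Theorem~\ref{t3}. I may assume $\delta(T,\varphi)<+\infty$, otherwise the inequality is trivial. Using the homogeneity $\delta(T,\mu\psi)=\mu^{p}\delta(T,\psi)$ for $\mu>0$, it suffices to prove $\delta(T,\psi)\leqslant\delta(T,\varphi)$ when $l<1$; the general statement is then obtained by applying this to $\psi/\mu$ for arbitrary $\mu>l$ and letting $\mu\to l^{+}$. The equality case under $l=\lim\psi/\varphi$ will follow from the first part by swapping the roles of $\varphi$ and $\psi$, using that $\psi\sim l\varphi$ with $l>0$ and $\varphi\to+\infty$ force $\psi\to+\infty$ on $\Supp T$ at infinity.

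Assuming $l<1$, I introduce the convex function $w_{c}:=\max(\psi,\varphi-c)$ for $c>0$, which decreases pointwise to $\psi$ as $c\to+\infty$; since both $w_{c}$ and $\psi$ are continuous, the convergence is locally uniform. As $\varphi\to+\infty$ and $\limsup\psi/\varphi=l<1$ on $\Supp T$ at infinity, for each $c$ there is $R_{c}>0$ with $\psi<\varphi-c$ on $\Supp T\cap\{|x|\geqslant R_{c}\}$, so $w_{c}$ coincides with $\varphi-c$ on an open neighborhood of that set. The central step is to establish, for every $\alpha\geqslant R_{c}$,
\[
\int_{\mathbb{B}(\alpha)\times\rb^{n}}T\wedge(dd^{\#}w_{c})^{p}\ =\ \int_{\mathbb{B}(\alpha)\times\rb^{n}}T\wedge(dd^{\#}\varphi)^{p}.
\]
I would prove this by smoothly regularizing $w_{c}$ and $\varphi$ to convex $w_{c}^{\varepsilon},\varphi^{\varepsilon}$, using the telescoping identity
\[
(dd^{\#}w_{c}^{\varepsilon})^{p}-(dd^{\#}\varphi^{\varepsilon})^{p}=dd^{\#}(w_{c}^{\varepsilon}-\varphi^{\varepsilon})\wedge\sum_{j=0}^{p-1}(dd^{\#}w_{c}^{\varepsilon})^{j}\wedge(dd^{\#}\varphi^{\varepsilon})^{p-1-j},
\]
and integrating against $T$ over $\mathbb{B}(\alpha)\times\rb^{n}$. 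Since $T$ is symmetric and closed ($dT=d^{\#}T=0$) and each factor $dd^{\#}w_{c}^{\varepsilon}$, $dd^{\#}\varphi^{\varepsilon}$ is $dd^{\#}$-closed, Stokes reduces the difference to a single boundary integral over $\partial\mathbb{B}(\alpha)$ whose integrand is proportional to $d^{\#}(w_{c}^{\varepsilon}-\varphi^{\varepsilon})$. For $\varepsilon$ small enough, $w_{c}^{\varepsilon}-\varphi^{\varepsilon}\equiv-c$ on an open neighborhood of $\partial\mathbb{B}(\alpha)\cap\Supp T$, so $d^{\#}(w_{c}^{\varepsilon}-\varphi^{\varepsilon})$ vanishes there; a regularization of $T$ supported in an $\varepsilon$-neighborhood of $\Supp T$ then annihilates the boundary integrand. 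Passing to the limit $\varepsilon\to 0$ via Proposition~\ref{2} with $m=n$ yields the identity, and sending $\alpha\to+\infty$ by monotone convergence gives $\int T\wedge(dd^{\#}w_{c})^{p}\leqslant\delta(T,\varphi)$.

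To conclude, the weak convergence $T\wedge(dd^{\#}w_{c})^{p}\to T\wedge(dd^{\#}\psi)^{p}$ furnished by Proposition~\ref{2} applied to the locally uniform decrease $w_{c}\searrow\psi$, combined with lower semicontinuity tested against smooth cutoffs $\chi_{R}$ equal to $1$ on $\mathbb{B}(R)$, yields $\int\chi_{R}T\wedge(dd^{\#}\psi)^{p}\leqslant\delta(T,\varphi)$ after letting $c\to+\infty$; monotone convergence in $R\to+\infty$ then gives $\delta(T,\psi)\leqslant\delta(T,\varphi)$. I expect the Stokes identity for the truncated integrals to be the most delicate point: one must justify rigorously that the boundary integral on $\partial\mathbb{B}(\alpha)$ vanishes once reduced to $\Supp T$, which requires the regularizations to respect uniformly in $\varepsilon$ the spatial separation of the locus $\{w_{c}=\psi\}$ from $\Supp T\cap\{|x|\geqslant R_{c}\}$, together with a careful passage to the limit using Proposition~\ref{2}.
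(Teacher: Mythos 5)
Your proposal is correct and takes essentially the same route as the paper: the paper introduces $w_m=\max\{(1+\varepsilon)\varphi-m,\max(\psi,-M)\}$, observes that it agrees with $(1+\varepsilon)\varphi-m$ outside a large ball on $\Supp T$ and with the truncation of $\psi$ on a smaller ball, transfers the mass via Stokes, and concludes with Proposition \ref{2}, exactly matching your $w_c=\max(\psi,\varphi-c)$ scheme. The only cosmetic differences are your reduction to $l<1$ versus the paper's reduction to $l=1$ with a $(1+\varepsilon)$ factor, and the paper's use of a smooth cutoff $\phi$ in the Stokes step instead of the sharp sphere $\partial\mathbb{B}(\alpha)$, which sidesteps the regularization delicacies you rightly flag at the end.
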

\begin{proof} For the proof, we proceed as in \cite{1}. Since $\delta(T,\lambda\varphi)=\lambda^{p}\delta(T,\varphi),\ \forall\lambda>0$, it suffices to prove the inequality for $l=1$. For $\varepsilon>0$, $R>0$ and $M>0$ fixed, we put $$\psi_M=\max\{\psi,-M\},\quad w_m=\max\{(1+\varepsilon)\varphi-m,\psi_M\}.$$
For $m$ large enough, $w_m=\psi_M$ on the ball $\mathbb{B}(2R)$. On the other hand, by hypothesis we can find $R'>2R$, such that $w_m=(1+\varepsilon)\varphi-m$ on $\{|x|>R'\}$. Let $\phi$ be a smooth function on $\rb^n$ such that $0\leqslant\phi\leqslant1$ and $\phi=1$ on $\overline{\mathbb{B}}(R')$. Then, Stokes formula gives
$$\begin{array}{lcl}
\ds\int_{\mathbb{B}(2R)\times\rb^n}T\wedge(dd^{\#}\psi_M)^{p}\leqslant\ds\int_{\overline{\mathbb{B}}(R')\times\rb^n}T\wedge(dd^{\#}w_m)^{p}&\leqslant&\ds\int_{\rb^n\times\rb^n}T\wedge\phi(dd^{\#}w_m)^{p}\\&=&\ds\int_{\rb^n\times\rb^n}T\wedge w_mdd^{\#}\phi\wedge(dd^{\#}w_m)^{p-1}.
\end{array}$$
As the support of $dd^{\#}\phi$ is included in the set $\{|x|>R'\}$, where $w_m=(1+\varepsilon)\varphi-m$. Then, by replacing $w_m$ by $(1+\varepsilon)\varphi-m$ and applying another time Stokes formula, the last integral is equals to $(1+\varepsilon)^{p}\ds\int_{\rb^n\times\rb^n}T\wedge\phi(dd^{\#}\varphi)^{p}$. It follows that
$$\int_{\mathbb{B}(2R)\times\rb^n}T\wedge(dd^{\#}\psi_M)^{p}\leqslant(1+\varepsilon)^{p}\ds\int_{\rb^n\times\rb^n}T\wedge(dd^{\#}\varphi)^{p}.$$
Moreover, the sequence $\psi_M$ is convex decreasing to $\psi$, so by Proposition \ref{2} we have the weak convergence $T\wedge (dd^{\#}\psi_M)^{p}\longrightarrow T\wedge (dd^{\#}\psi)^{p}$ as $M\rightarrow+\infty$. Then,
$$\int_{\mathbb{B}(2R)\times\rb^n}T\wedge(dd^{\#}\psi)^{p}\leqslant(1+\varepsilon)^{p}\ds\int_{\rb^n\times\rb^n}T\wedge(dd^{\#}\varphi)^{p}.$$
The proof is finished by letting $\varepsilon\rightarrow0$ and $R\rightarrow+\infty$ in this order.
\end{proof}
We close this section with a version of the semi-continuity results due to Demailly \cite{3} in the superformalism setting.
\begin{pro}\
\begin{enumerate}
\item Assume that $T_k,T$ are weakly positive closed currents of bidimension $(p,p)$ on $\rb^n\times\rb^n$ such that $(T_k)_k$ converges weakly to $T$. Then, for all $\varphi$ a convex and exhaustive function on $\cup_k\Supp T_k$, we have $$\delta(T,\varphi)\leqslant \liminf_{k\rightarrow+\infty}\delta(T_k,\varphi).$$
\item Let $T$ be a weakly positive closed current of bidimension $(p,p)$ on $\rb^n\times\rb^n$. Then, for all sequence $(\varphi_k)_k$ of convex and exhaustive functions on $\Supp T$ which converges pointwise to $\varphi$, we have $$\delta(T,\varphi)\leqslant \liminf_{m\rightarrow+\infty}\delta(T,\varphi_k).$$
\end{enumerate}
\end{pro}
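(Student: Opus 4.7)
Both statements reduce, via Proposition \ref{2}, to the classical fact that total mass is lower semicontinuous under weak convergence of positive Radon measures: if $\mu_k\geqslant 0$ are Radon measures on $\rb^n\times\rb^n$ and $\int\chi\,d\mu_k\to\int\chi\,d\mu$ for every continuous compactly supported $\chi\geqslant 0$, then $\mu(\rb^n\times\rb^n)\leqslant\liminf_k\mu_k(\rb^n\times\rb^n)$; the proof is immediate, since for any such $\chi$ with $0\leqslant\chi\leqslant 1$ one has $\int\chi\,d\mu=\lim_k\int\chi\,d\mu_k\leqslant\liminf_k\mu_k(\rb^n\times\rb^n)$, and passing to the supremum over $\chi$ yields the claim. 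For (1), I would apply Proposition \ref{2} in the case $m=n$, $q=p$, to the currents $T_k,T$ together with the constant sequences $u_j^k=u_j=\varphi$, $j=1,\ldots,p$. Since $\varphi$ is convex and finite on $\rb^n$, it is automatically continuous, so the locally uniform convergence hypothesis is trivially satisfied. Proposition \ref{2}(2) then provides the weak convergence $T_k\wedge(dd^{\#}\varphi)^p\longrightarrow T\wedge(dd^{\#}\varphi)^p$ as weakly positive Radon measures on $\rb^n\times\rb^n$, and the lower semicontinuity fact above delivers $\delta(T,\varphi)\leqslant\liminf_k\delta(T_k,\varphi)$.

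For (2), the strategy is identical with $T_k:=T$ and $u_j^k=\varphi_k$, $u_j=\varphi$. The only new point is to bridge the gap between the pointwise convergence supplied by the hypothesis and the locally uniform convergence required by Proposition \ref{2}. This is handled by invoking the classical fact of convex analysis that a sequence of finite convex functions on $\rb^n$ converging pointwise to a finite (hence continuous) convex function in fact converges uniformly on every compact subset of $\rb^n$. Once this upgrade is in hand, Proposition \ref{2}(2) gives $T\wedge(dd^{\#}\varphi_k)^p\longrightarrow T\wedge(dd^{\#}\varphi)^p$ weakly as positive measures, and the conclusion follows exactly as in (1).

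The only mildly delicate step in this scheme is the pointwise-to-locally-uniform upgrade for convex functions in (2); everything else is a direct combination of the continuity theorem, Proposition \ref{2}, with standard measure theory. Note also that whenever $\delta(T_k,\varphi)$ (resp.\ $\delta(T,\varphi_k)$) is infinite for infinitely many $k$ the corresponding $\liminf$ is $+\infty$ and the inequality is automatic, so no finiteness assumption on the degrees is required for the argument to go through.
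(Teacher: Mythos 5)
Your argument is correct. For part (2) it is essentially the paper's own proof: the paper restricts to the sublevel sets $B(R)=\{\varphi<R\}$, inserts a cutoff $\chi_\varepsilon$, applies Proposition \ref{2} to pass to the limit in $\int\chi_\varepsilon T\wedge(dd^{\#}\varphi_k)^p$, and then lets $\varepsilon\rightarrow0$ and $R\rightarrow+\infty$ --- which is precisely your lower-semicontinuity-of-total-mass step written out by hand (note that a smooth cutoff suffices as test function, so you do not even need to extend the weak convergence from smooth to continuous test data); the pointwise-to-locally-uniform upgrade for convex sequences that you single out is likewise used silently there. For part (1), however, you take a genuinely shorter route. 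The paper does not apply Proposition \ref{2} to the varying currents $T_k$ with the fixed weight $\varphi$; instead it first replaces $\varphi$ by smooth convex approximations $\varphi_m$ and glues them to $\varphi$ across $\partial B(R)$ via $\psi_m=\max\{\varphi,(1-\varepsilon)(\varphi_m-\frac{1}{m})+R\varepsilon\}$, so that the test form $\chi_\varepsilon(dd^{\#}\varphi_m)^{p}$ is smooth and the bare definition of the weak convergence $T_k\rightarrow T$ applies; Proposition \ref{2} is then invoked only at the end, for the single current $T$, to let $m\rightarrow+\infty$. Your observation that Proposition \ref{2} already covers varying currents together with a merely continuous (here constant-in-$k$) weight makes this gluing detour unnecessary, at the cost of leaning on the full strength of that proposition. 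A side effect worth recording: your argument for (1) never uses the exhaustiveness of $\varphi$ on $\cup_k\Supp T_k$ --- in the paper's proof that hypothesis only serves to make the cutoffs $\chi_\varepsilon$ compactly supported --- so your proof in fact yields (1) without it.
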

\begin{proof}\
 $(1)$ For $\varepsilon>0$ and $R>0$ fixed, let $(\varphi_m)_m$ be a sequence of convex and smooth functions converges to $\varphi$ such that $\varphi\leqslant\varphi_m<\varphi+\frac{1}{m}$ on $\{R-\varepsilon\leqslant\varphi\leqslant R+\varepsilon\}$, and we put
$$\psi_m=\left\lbrace\begin{array}{lll}
\varphi\quad{\rm on}\ \rb^n\smallsetminus B(R)\\
\max\{\varphi,(1-\varepsilon)(\varphi_m-\frac{1}{m})+R\varepsilon\}\quad{\rm on}\ \bar{B}(R),\\
\end{array}\right.$$
where $B(R)=\{x\in\rb^n;\varphi(x)<R\}$. It is clear that the definition is coherent and $\psi_m$ is convex. Choose a smooth function $\chi_\varepsilon$ such that $0\leqslant\chi_\varepsilon\leqslant1$, $\chi_\varepsilon=1$ on $B(R-\varepsilon)$ and with support in $B(R-\frac{\varepsilon}{2})$. Then, for all $m\geqslant\left[\frac{2(1-\varepsilon)}{\varepsilon^2}\right]$ we have
$$\begin{array}{lcl}
\ds\int_{B(R)\times\rb^n}T_k\wedge(dd^{\#}\varphi)^{p}=\ds\int_{B(R)\times\rb^n}T_k\wedge(dd^{\#}\psi_m)^{p}&\geqslant&\ds\int_{B(R-\frac{\varepsilon}{2})\times\rb^n}T_k\wedge(dd^{\#}\psi_m)^{p}\\&\geqslant&\ds(1-\varepsilon)^{p}\int_{B(R-\frac{\varepsilon}{2})\times\rb^n}\chi_\varepsilon T_k\wedge(dd^{\#}\varphi_m)^{p}.
\end{array}$$
Since $\chi_\varepsilon(dd^{\#}\varphi_m)^{p}$ is smooth and with compact support and $T_k$ converges weakly to $T$, we obtain $$\liminf_{k\rightarrow+\infty}\delta(T_k,\varphi)\geqslant\liminf_{k\rightarrow+\infty}\int_{B(R)\times\rb^n}T_k\wedge(dd^{\#}\varphi)^{p}\geqslant(1-\varepsilon)^{p}\int_{B(R-\frac{\varepsilon}{2})\times\rb^n}\chi_\varepsilon T\wedge(dd^{\#}\varphi_m)^{p}.$$
In virtue of Proposition \ref{2}, we get
$$\liminf_{k\rightarrow+\infty}\delta(T_k,\varphi)\geqslant(1-\varepsilon)^{p}\int_{B(R-\frac{\varepsilon}{2})\times\rb^n}\chi_\varepsilon T\wedge(dd^{\#}\varphi)^{p}.$$
The proof of (1) is finished by letting $\varepsilon\rightarrow0$ and $R\rightarrow+\infty$ in this order.\\
$(2)$ For $R>0$ and $\varepsilon>0$ fixed, let $\chi_\varepsilon$ be a smooth function such that $0\leqslant\chi_\varepsilon\leqslant1$, $\chi_\varepsilon=1$ on $B(R-\varepsilon)$ and with support in $B(R)$. Then, $$\delta(T,\varphi_k)\geqslant\int_{B(R)\times\rb^n}T\wedge(dd^{\#}\varphi_k)^{p}\geqslant\int_{B(R)\times\rb^n}\chi_\varepsilon T\wedge(dd^{\#}\varphi_k)^{p}.$$
By using Proposition \ref{2}, it follows that $$\liminf_{k\rightarrow+\infty}\delta(T,\varphi_k)\geqslant\int_{B(R)\times\rb^n}\chi_\varepsilon T\wedge(dd^{\#}\varphi)^{p}.$$
The proof of (2) is completed by letting $\varepsilon\rightarrow0$ and $R\rightarrow+\infty$ in this order.
\end{proof}
\section{On the extension of positive supercurrents}
In this section, we are interested with the extension of positive currents in the superformalism setting. By an adaptation of the techniques of Dabbek, Elkhadhra and El Mir \cite{31} in the complex context and the work of Berndtsson \cite{11} on the removable singularities of minimal currents, we prove the following theorem :
\begin{thm}\label{t7} Let $\Omega$ be an open subset of $\rb^n$ and  $T$ be a weakly positive current of bidimension $(p,p)$ on
$\{\Omega\smallsetminus K\}\times\rb^n$ with locally finite mass near $K$, where $K$ is a compact subset of $\rb^n$ with sigma-finite $(p-2)$-dimensional Hausdorff measure.  Assume that either the measure $dd^{\#}T\wedge\beta^{p-1}$ is locally finite near $K$ or $T\wedge\beta^{p-1}$ is concave on $\{\Omega\smallsetminus K\}\times\rb^n$, then there exists a positive measure $S$ supported in $K$ such that
$\widetilde{dd^{\#}T\wedge\beta^{p-1}}=dd^{\#}\widetilde{T}\wedge\beta^{p-1}+S,$ where $\widetilde{T}$ and $\widetilde{dd^{\#}T\wedge\beta^{p-1}}$ are the  extensions by $0$ of $T$ and $dd^{\#}T\wedge\beta^{p-1}$ respectively across $K\times\rb^n$.
\end{thm}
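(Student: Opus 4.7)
The plan is to adapt the approach of Dabbek--Elkhadhra--El Mir \cite{31} from the complex category to the superformalism, in the spirit of Berndtsson's removable-singularity theorem \cite{11} for minimal supercurrents. First I would verify that both trivial extensions make sense as measures: $\widetilde T$ is immediate from the local finite mass of $T$ together with the fact that weak positivity passes to trivial extensions. For $\widetilde{dd^{\#}T\wedge\beta^{p-1}}$, hypothesis (1) gives this directly, whereas under hypothesis (2) the concavity makes $-dd^{\#}T\wedge\beta^{p-1}$ a non-negative measure on $\{\Omega\smallsetminus K\}\times\rb^n$, whose total mass near $K$ is controlled by a preliminary Stokes computation against a smooth function equal to $1$ on a compact neighborhood of $K$.

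Using the $\sigma$-finite $(p-2)$-Hausdorff measure of $K$, I then construct smooth cutoffs $\chi_\epsilon:\rb^n\to[0,1]$ that vanish in a neighborhood of $K$, tend pointwise to $\mathbf 1_{\rb^n\smallsetminus K}$, and satisfy
\[
\int_{\Omega\times\rb^n}\widetilde T\wedge d\chi_\epsilon\wedge d^{\#}\chi_\epsilon\wedge\beta^{p-1}\longrightarrow 0.
\]
This is the classical El Mir construction transcribed to the superformalism: cover $K$ by finitely many balls $B(x_j,r_j)$ with $\sum_j r_j^{p-2}$ arbitrarily small, take a product of smooth radial bumps, and balance $|d\chi_\epsilon|\lesssim r_j^{-1}$ against the mass scaling $\int_{B(x_j,r_j)\times\rb^n}\widetilde T\wedge\beta^p\lesssim r_j^p$ furnished by the Lelong--Jensen type monotonicity of Section~4 under both hypotheses.

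For $\phi\in\mathscr D(\Omega)$, the form $\chi_\epsilon\phi$ is compactly supported in $\{\Omega\smallsetminus K\}\times\rb^n$, so Stokes yields
\[
\langle dd^{\#}T\wedge\beta^{p-1},\chi_\epsilon\phi\rangle=\langle T\wedge\beta^{p-1},dd^{\#}(\chi_\epsilon\phi)\rangle.
\]
I would expand $dd^{\#}(\chi_\epsilon\phi)$ into its four Leibniz summands, control the two cross-gradient terms by Cauchy--Schwarz against the vanishing quantity from the previous step, and pass to the limit using that $\widetilde T\wedge\beta^{p-1}$ and $\widetilde{dd^{\#}T\wedge\beta^{p-1}}$ carry no mass on $K\times\rb^n$. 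The outcome is the identity
\[
\widetilde{dd^{\#}T\wedge\beta^{p-1}}=dd^{\#}\widetilde T\wedge\beta^{p-1}+S,\qquad S=\lim_{\epsilon\to 0}\widetilde T\wedge\beta^{p-1}\wedge dd^{\#}\chi_\epsilon,
\]
in the distributional sense, with $\Supp S\subset K\times\rb^n$ because $dd^{\#}\chi_\epsilon$ concentrates in the transition region near $K$.

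The main obstacle is to show that $S$ is a \emph{positive} Radon measure, not merely a distribution. Since the identity determines $S$ independently of the particular cutoff, I would specialize to $\chi_\epsilon=\eta(\psi/\epsilon)$ where $\psi\geqslant 0$ is a smooth convex function with $\psi^{-1}(0)=K$ and $\eta:[0,\infty)\to[0,1]$ is smooth nondecreasing with $\eta(0)=0$ and $\eta\equiv 1$ on $[1,\infty)$; then
\[
dd^{\#}\chi_\epsilon=\epsilon^{-1}\eta'(\psi/\epsilon)\,dd^{\#}\psi+\epsilon^{-2}\eta''(\psi/\epsilon)\,d\psi\wedge d^{\#}\psi,
\]
whose first summand is weakly positive (convexity of $\psi$ and $\eta'\geqslant 0$), and wedged against the weakly positive $(1,1)$-current $\widetilde T\wedge\beta^{p-1}$ gives a non-negative contribution. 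A careful choice of $\eta$, splitting $\eta''=\eta''_+-\eta''_-$ and absorbing the negative part into the vanishing gradient estimate from the cutoff step, handles the second summand. Combined with the uniform $\widetilde T\wedge\beta^p$-mass bound inherited from the cutoff construction, this identifies $S$ as a non-negative distribution of order zero, hence a positive Radon measure supported on $K\times\rb^n$.
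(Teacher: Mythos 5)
Your overall skeleton (cutoffs vanishing near $K$, Leibniz expansion of $dd^{\#}(\chi_\epsilon\phi)$, Cauchy--Schwarz on the cross-gradient terms, $S$ obtained as the limit of $T\wedge\beta^{p-1}\wedge dd^{\#}\chi_\epsilon$) is the same as the paper's, but the two ingredients you rely on to make it run both fail under the stated hypotheses. First, the covering construction: $K$ is only assumed to have \emph{sigma-finite} $(p-2)$-dimensional Hausdorff measure, not zero measure, so you cannot cover it by balls with $\sum_j r_j^{p-2}$ arbitrarily small (take $K$ to be a compact piece of a $(p-2)$-plane, for which ${\mathscr H}_{p-2}(K)>0$). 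Second, the mass scaling $\int_{\mathbb{B}(x_j,r_j)\times\rb^n}\widetilde T\wedge\beta^{p}\lesssim r_j^{p}$ is exactly the monotonicity of Corollary \ref{c2}, which requires $T\wedge\beta^{p-1}$ to be \emph{convex}; in Theorem \ref{t7} it is either concave or carries no sign condition at all, and the paper exhibits (right after Theorem \ref{t6}) a concave example where the normalized mass quotient blows up as $r\to0$. So the key vanishing $\int\widetilde T\wedge\beta^{p-1}\wedge d\chi_\epsilon\wedge d^{\#}\chi_\epsilon\to 0$ is not reachable by your route. The paper replaces both ingredients by a single potential-theoretic device: a Landkof potential $u$ with $u=-\infty$ exactly on $K$ and smooth outside, cutoffs $v_k=\chi_{k^2}((\log(-u))^{-1/k})$ with $\chi_k$ convex increasing, and Proposition \ref{6}, which establishes $\int T\wedge\beta^{p-1}\wedge\frac{du\wedge d^{\#}u}{u^{2}(\log-u)^{1+\alpha}}<+\infty$ by an absorption argument (Cauchy--Schwarz with a coefficient $\tfrac12$ reinjected into the left-hand side) that needs no ball-by-ball mass estimate; the extra factor $\tfrac{1}{k^{2}}$ appearing in $dv_k\wedge d^{\#}\varphi$ then forces the cross terms to zero.

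The positivity of $S$ is the other genuine gap. Your argument needs a smooth \emph{convex} $\psi\geqslant0$ with $\psi^{-1}(0)=K$, which exists only when $K$ is convex --- not assumed here --- and even then the negative part of $\eta''$ is to be absorbed into an estimate you have not secured. In the paper, positivity of $S=\lim_k T\wedge\beta^{p-1}\wedge dd^{\#}v_k$ falls out of the explicit four-term expansion \eqref{e9} of $\langle T\wedge\beta^{p-1},g^{2}dd^{\#}\chi_k((\log-u)^{-\alpha})\rangle$, each term of which pairs nonnegatively with the weakly positive current $T\wedge\beta^{p-1}$ because $\chi_k'\geqslant0$ and $\chi_k''\geqslant0$; no splitting into positive and negative parts is needed. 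Finally, your ``preliminary Stokes computation'' producing local finiteness of $dd^{\#}T\wedge\beta^{p-1}$ in the concave case is circular as stated: a test function equal to $1$ on a neighborhood of $K$ is not supported in $\{\Omega\smallsetminus K\}\times\rb^n$ where $dd^{\#}T$ is defined, and the paper obtains this finiteness only at the very end, after the cutoffs $v_k$ and Proposition \ref{6} are available, by writing $0\geqslant\int v_k\,g\,dd^{\#}T\wedge\beta^{p-1}\geqslant\int v_k\,T\wedge\beta^{p-1}\wedge dd^{\#}g+o(1)$. Without the potential $u$ your proof does not close.
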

As an immediate consequence we see that in the case $T\wedge\beta^{p-1}$ is concave, the current $\widetilde{T}\wedge\beta^{p-1}$ is also concave. Based on the book of Landkof \cite{0} it is easy to see that for any compact set $K$ with sigma-finite $(p-2)$-dimensional Hausdorff measure there exists a potential $u$ such that $u=-\infty$ on $K$ and $u$ is smooth outside $K$. In order to get Theorem \ref{t7}, we prove :
\begin{pro}\label{6} With the same hypothesis of Theorem \ref{t7}, for $\alpha>0$ and ${\cal\rm O}\Subset\Omega$, we have $$\ds\int_{\{{\cal\rm O}\smallsetminus K\}\times\rb^n} T\wedge\beta^{p-1}\wedge{{du\wedge d^{\#} u}\over
u^2(\log-u)^{1+\alpha}}\ <+\infty .$$
\end{pro}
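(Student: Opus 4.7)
The strategy is to dominate the integrand by $\chi''(u)\,du\wedge d^{\#}u$ for a suitable bounded convex test function $\chi$, and then exploit the identity
\[
\chi''(u)\,du\wedge d^{\#}u=dd^{\#}\chi(u)-\chi'(u)\,dd^{\#}u
\]
(valid off $K$) via Stokes' formula with two cutoffs (a spatial cutoff and a level-set truncation of $u$).

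After replacing $u$ by $u-C$ with $C>0$ large, we may assume $u\le-e$ on a neighbourhood of $\overline{{\cal O}}$, so that $\log(-u)\ge 1$. Take
\[
\chi(t)=\frac{1}{\alpha(\log(-t))^{\alpha}},\qquad t\le -e.
\]
A direct computation gives $0<\chi(t)\le 1/\alpha$, $\chi'(t)>0$ uniformly bounded, and
\[
\chi''(t)=\frac{\log(-t)+\alpha+1}{t^{2}(\log(-t))^{\alpha+2}}\ \ge\ \frac{1}{t^{2}(\log(-t))^{1+\alpha}}.
\]
Since $du\wedge d^{\#}u$ is strongly positive and $T\wedge\beta^{p-1}$ is (the coefficient of) a positive Borel measure by Proposition 4.1 of \cite{5}, wedging gives the pointwise inequality of positive top-degree currents
\[
T\wedge\beta^{p-1}\wedge\frac{du\wedge d^{\#}u}{u^{2}(\log(-u))^{1+\alpha}}\ \le\ T\wedge\beta^{p-1}\wedge\chi''(u)\,du\wedge d^{\#}u,
\]
so it suffices to bound the integral of the right-hand side on $({\cal O}\smallsetminus K)\times\rb^{n}$.

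I would then introduce $\psi\in{\scr D}(\Omega)$, $0\le\psi\le 1$, $\psi\equiv 1$ on a neighbourhood of $\overline{{\cal O}}$, and the Lipschitz truncation $u_{N}=\max(u,-N)$ regularized to a smooth $u_{N,\varepsilon}$; the key identity now holds classically on $\Omega$. Stokes' formula gives
\[
\int\psi T\wedge\beta^{p-1}\wedge\chi''(u_{N,\varepsilon})\,du_{N,\varepsilon}\wedge d^{\#}u_{N,\varepsilon}=\int\chi(u_{N,\varepsilon})\,dd^{\#}(\psi T)\wedge\beta^{p-1}-\int\chi'(u_{N,\varepsilon})\psi T\wedge\beta^{p-1}\wedge dd^{\#}u_{N,\varepsilon}.
\]
Expanding $dd^{\#}(\psi T)=\psi\,dd^{\#}T+dd^{\#}\psi\wedge T+d\psi\wedge d^{\#}T-d^{\#}\psi\wedge dT$, the three derivative-of-$\psi$ contributions are supported in $\operatorname{supp}(d\psi)\Subset\Omega\smallsetminus K$ where $T$ has locally finite mass, so they are dominated by $C\|\chi\|_{\infty}\|T\|_{\operatorname{supp}\psi}$; the $\chi'(u)$-term is controlled by $\|\chi'\|_{\infty}$ times the $\operatorname{supp}\psi$-mass of $T\wedge\beta^{p-1}\wedge dd^{\#}u$, finite since $u$ is smooth off $K$ and $\|T\|$ is locally finite; and the remaining term $\int\chi(u)\psi\,dd^{\#}T\wedge\beta^{p-1}$ is bounded by $\|\chi\|_{\infty}\|dd^{\#}T\wedge\beta^{p-1}\|_{\operatorname{supp}\psi}$ under the first hypothesis, while under the concavity hypothesis $\chi(u)\psi\,dd^{\#}T\wedge\beta^{p-1}\le 0$ makes this term nonpositive, hence harmless. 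Passing first $\varepsilon\to 0$ and then $N\to+\infty$ (monotone convergence on the left, since $u_{N}\downarrow u$ and the integrand is nonnegative) yields a uniform upper bound, which is the claim.

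\medskip

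\noindent\textbf{Main obstacle.} The delicate point is the concave case: there is no a priori control on $\|dd^{\#}T\wedge\beta^{p-1}\|$ near $K$, so the sign of $dd^{\#}T\wedge\beta^{p-1}\le 0$ must be combined with the positivity of $\chi$ and $\psi$ so that the Stokes contribution lands on the favourable side of the inequality. Making this cancellation compatible with the simultaneous double limit $(\varepsilon,N)\to(0,+\infty)$ and with the control of the cutoff-boundary terms is the heart of the argument; everything else is standard Chern--Levine--Nirenberg bookkeeping adapted to the superformalism.
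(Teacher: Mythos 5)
Your overall strategy (compose a bounded convex profile with the potential $u$, integrate by parts against a spatial cutoff, and treat the $dd^{\#}T\wedge\beta^{p-1}$ term either by the finite-mass hypothesis or by its sign in the concave case) is the same as the paper's, but two of the estimates you rely on fail as stated, and they are precisely the points where the paper's proof does something different. First, the term $\int\chi'(u)\,\psi\,T\wedge\beta^{p-1}\wedge dd^{\#}u$ is not ``finite since $u$ is smooth off $K$ and $\|T\|$ is locally finite'': smoothness off $K$ gives no uniform bound on the Hessian of $u$ near $K$, where $u\to-\infty$, so this measure may have infinite mass on $\Supp\psi$. The paper never estimates this term in size; it expands $dd^{\#}\chi_k((\log-u)^{-\alpha})$ into four terms, observes that they are all positive, and simply discards the $dd^{\#}u$-term, keeping only the $du\wedge d^{\#}u$-term it wants to bound. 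In your notation the fix is to use the sign $-\chi'(u)\,T\wedge\beta^{p-1}\wedge dd^{\#}u\leqslant 0$, not its magnitude. Second, the contributions $d\psi\wedge d^{\#}T$ and $d^{\#}\psi\wedge dT$ are not ``dominated by $C\|\chi\|_{\infty}\|T\|_{\Supp\psi}$'': the derivatives of a current of order zero are not controlled by its mass. The paper handles the corresponding cross term (its term $III$) by first using the symmetry of $T\wedge\beta^{p-1}$ to rewrite it with $du\wedge d^{\#}g$, then applying Cauchy--Schwarz for the positive bilinear form $(\varphi,\phi)\mapsto\int T\wedge\beta^{p-1}\wedge\varphi\wedge J(\phi)$ and absorbing the resulting factor $\tfrac12$ of the main $du\wedge d^{\#}u$-integral back into the left-hand side via a separate lower bound; this absorption step has no counterpart in your sketch and cannot be omitted.

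There is also a structural issue with where you apply Stokes. Your $\chi(u_N)$ is only locally constant (equal to $\chi(-N)$) near $K$, $dd^{\#}u_N$ acquires a singular contribution along $\{u=-N\}$ that you do not track as $N\to+\infty$, and in the limit the pairing $\int\chi(u)\,dd^{\#}(\psi T)\wedge\beta^{p-1}$ presupposes that $dd^{\#}T$ makes sense across $K$ --- which is essentially the conclusion of Theorem \ref{t7}, so the argument risks circularity in the concave case. The paper's truncation $\chi_k((\log-u)^{-\alpha})$, with $\chi_k\equiv 0$ near $0$, vanishes \emph{identically} in a neighbourhood of $K$, so every pairing is with a test form compactly supported in $\{\Omega\smallsetminus K\}\times\rb^n$ and no extension of $T$ or $dd^{\#}T$ across $K$ is ever presupposed. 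Your opening reduction (the lower bound $\chi''(t)\geqslant t^{-2}(\log(-t))^{-1-\alpha}$) is correct and is a nice way to package the weight, but the rest of the argument needs the sign trick, the Cauchy--Schwarz absorption, and a cutoff vanishing near $K$ to go through.
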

\begin{proof} Let $\psi$ be a smooth, compactly supported, even and positive on $]-1,1[$ such that $\int\psi(t)dt=1$ and let
$\chi_k(t)=\sup(t-{\frac{2}{k}},0)\ast\psi_k(t)$ where $\psi_k(t)=k\psi(kt)$. Here
$\chi_k$ is a sequence of convex increasing functions
converging towards $\sup(t,0)$. Furthermore, $\chi_k(t)=0$ if $t<\frac{1}{k}$ and $\chi'_k(t)\leqslant 1$. We claim that if $g$ is a test function such that $g=1$ in a neighborhood of ${\cal\rm O}$, then, the sequence
$\left\langle T\wedge\beta^{p-1},g^2dd^{\#}\chi_k((\log-u)^{-\alpha})\right\rangle$ is bounded. Indeed, since $T\wedge\beta^{p-1}$ is symmetric of bidimension $(1,1)$, a simple computation proves that $T\wedge\beta^{p-1}\wedge du\wedge d^{\#}g=T\wedge\beta^{p-1}\wedge dg\wedge d^{\#}u$, which leads to
\begin{equation}\label{e8}
\begin{array}{lcl}
\left\langle T\wedge\beta^{p-1}, g^2 dd^{\#}\chi_k
((\log-u)^{-\alpha}) \right\rangle&=&\left\langle dd^{\#}T\wedge\beta^{p-1}, \chi_k((\log-u)^{-\alpha})g^2\right\rangle\\&-& \left\langle T\wedge\beta^{p-1},\chi_k ((\log-u)^{-\alpha}) dd^{\#}g^2 \right\rangle\\&-&
2\left\langle T\wedge\beta^{p-1}, \chi'_k ((\log-u)^{-\alpha}){{2g\alpha du\wedge d^{\#}g}\over(- u)(\log-u)^{\alpha +1}}\right\rangle\\&=&I+II+III.
\end{array}
\end{equation}
Assume that the currents $\widetilde{dd^{\#}T\wedge\beta^{p-1}}$ and $\widetilde{T}$ exist, then $I$ and $II$ are bounded. On the other hand, by considering the following symmetric (because $T$ is symmetric) bilinear form
$$(\varphi,\psi)=\int_{\mathbb{R}^n\times\mathbb{R}^n}T\wedge\beta^{p-1}\wedge\varphi\wedge J(\psi),\quad\quad\forall\,\varphi,\psi\in{\mathscr D}^{1,0}(\mathbb{R}^n\times\mathbb{R}^n),$$
we see that $(\varphi,\varphi)\geqslant0$, since $T$ is weakly positive. Then, the Cauchy-Schwarz inequality yields
\begin{equation}\label{e10}
\left|III\right|\leq {1\over 2} \left\langle T\wedge\beta^{p-1},g^2\chi'^2_k ((\log-u)^{-\alpha}){{du\wedge d^{\#}u}\over u^2(\log-u)^{2\alpha+2}}\right\rangle + 32\alpha^2\left\langle
T\wedge\beta^{p-1} , dg\wedge d^{\#} g\right\rangle.
\end{equation}
Next, a simple computation gives
\begin{equation}\label{e9}
\begin{array}{lcl}
\left\langle T\wedge\beta^{p-1}, g^2dd^{\#}\chi_k ((\log-u)^{-\alpha})
\right\rangle &=& \left\langle T\wedge\beta^{p-1} , \chi'_k ((\log-u)^{-\alpha}){{\alpha
g^2dd^{\#} u}\over (-u)(\log-u)^{\alpha+1}}\right\rangle\\&+&\
\left\langle T\wedge\beta^{p-1} , \chi'_k ((\log-u)^{-\alpha}){{\alpha g^2
du\wedge
d^{\#}u}\over u^2(\log-u)^{\alpha+1}}\right\rangle\\&+&\ \left\langle
T\wedge\beta^{p-1} , \chi'_k ((\log-u)^{-\alpha}){\alpha (\alpha +1)g^2
du\wedge
d^{\#}u \over u^2(\log-u)^{\alpha+2}}\right\rangle\\&+&\ \left\langle
T\wedge\beta^{p-1} , \chi''_k ((\log-u)^{-\alpha}){\alpha^2 g^2 du\wedge d^{\#}u
\over u^2(\log-u)^{2\alpha+2}}\right\rangle.
\end{array}
\end{equation}
Taking into account the positivity of the right hand terms in (\ref{e9}), and the fact that $\chi'_k\leqslant 1$, we deduce that
\begin{equation}\label{e11}
\left\langle T\wedge\beta^{p-1},g^2dd^{\#}\chi_k((\log-u)^{-\alpha})\right\rangle
\geqslant\left\langle T\wedge\beta^{p-1} , \chi'^2_k((\log-u)^{-\alpha}){{g^2du\wedge
d^{\#}u}\over u^2(\log-u)^{2\alpha+2}}\right\rangle.
\end{equation}
By (\ref{e8}) and (\ref{e10}), we get
$$\begin{array}{lcl}
\left\langle
T\wedge\beta^{p-1},g^2dd^{\#}\chi_k((\log-u)^{-\alpha})\right\rangle&\leqslant&
\left|I\right|+\left|II\right|+ {1\over 2}\left\langle T\wedge\beta^{p-1} ,
g^2\chi'^2_k ((\log-u)^{-\alpha}){{du\wedge d^{\#}u}\over
u^2(\log-u)^{2\alpha+2}}\right\rangle\\&+& 32\alpha^2\left\langle
T\wedge\beta^{p-1} , dg\wedge d^{\#} g\right\rangle.
\end{array}$$
Consequently, the claim follows by (\ref{e11}).
On the other hand the positivity of the terms involving (\ref{e9}) and the claim imply that there
exists a constant $C>0$ such that $$ \left\langle T\wedge\beta^{p-1} , \chi'_k
((\log-u)^{-\alpha}){g^2 du\wedge d^{\#}u \over u^2 (\log-u)^{1+\alpha}}
\right\rangle\leqslant \left\langle T\wedge\beta^{p-1}, g^2dd^{\#}\chi_k
((\log-u)^{-\alpha})\right\rangle\leqslant C .$$ We have in addition
$\chi'_k(t)=1$ if $t\geqslant\frac{3}{k}$,
then
$$\ds\int_{\{\Supp(g)\cap
\{(\log-u)^{-\alpha}\geqslant \frac{3}{k}\}\}\times\rb^n}g^2T\wedge\beta^{p-1}\wedge{{du\wedge d^{\#} u}\over
u^2(\log-u)^{1+\alpha}}\ \ \ {\rm {is\ bounded .}}$$ Therefore, if $k\rightarrow +\infty$,
then the integral
$\int_{\{\Supp(g)\smallsetminus K\}\times\rb^n}g^2T\wedge\beta^{p-1}\wedge{{du\wedge d^{\#} u}\over
u^2(\log-u)^{1+\alpha}}$ is finite.
For the case $T\wedge\beta^{p-1}$ is concave on $\{\Omega\smallsetminus K\}\times\rb^n$, we just observe that the first term $I$ in (\ref{e8}) is negative and therefore the proof of the claim is still holds.
\end{proof}
\begin{proofof}{\it Theorem \ref{t7}.} Let $v_k=\chi_{k^2}((\log(-u))^{-\frac{1}{k}})$, then $v_k$ increases
towards the characteristic function $\1_{\Omega\smallsetminus K}$. For every test function $\varphi$, it is clear that
\begin{equation}\label{e12}
\begin{array}{lcl}
\left\langle dd^{\#}(v_k T\wedge\beta^{p-1}),\varphi\right\rangle&=& \left\langle
v_k T\wedge\beta^{p-1},dd^{\#}\varphi\right\rangle\\&=&\left\langle
T\wedge\beta^{p-1},dd^{\#}(v_k\varphi)\right\rangle -
\left\langle T\wedge\beta^{p-1},\varphi dd^{\#} v_k \right\rangle\\&-&2
\left\langle T\wedge\beta^{p-1},dv_k\wedge d^{\#}\varphi\right\rangle.
\end{array}
\end{equation}
Thanks to the Cauchy-Schwarz inequality, we have
$$\begin{array}{lcl}\left|\left\langle T\wedge\beta^{p-1},dv_k\wedge
d^{\#}\varphi\right\rangle\right|&=&
\left|\left\langle T\wedge\beta^{p-1},
\chi'_{k^2}((\log-u)^{-\frac{1}{k}}){{\frac{1}{k}du\wedge d^{\#}\varphi}\over
(-u)(\log-u)^{{\frac{1}{k}}+1}}\right\rangle\right|\\&\leqslant&
\left\langle T\wedge\beta^{p-1}
,\left|\chi'_{k^2}((\log-u)^{-\frac{1}{k}})((\log-u)^{-\frac{1}{k}})\right|
{{\frac{1}{k^2}du\wedge d^{\#} u}\over u^2(\log-u)^{2}}\right\rangle^{\frac{1}{2}}
\\& &.\left\langle T\wedge\beta^{p-1}
,\left|\chi'_{k^2}((\log-u)^{-\frac{1}{k}})((\log-u)^{-\frac{1}{k}})\right|d\varphi\wedge d^{\#}\varphi
\right\rangle^{\frac{1}{2}}\\&\leqslant&
C\left\langle T\wedge\beta^{p-1},
{\1_{\Supp(\varphi)\smallsetminus K}}{{\frac{1}{k^2}du\wedge d^{\#} u}\over u^2(\log-u)^{2}}
\right\rangle^{\frac{1}{2}}.\left\langle T\wedge\beta^{p-1}
,d\varphi\wedge d^{\#}\varphi\right\rangle^{\frac{1}{2}}.
\end{array}$$
In view of Proposition \ref{6},  $\left\langle T\wedge\beta^{p-1},
{\1_{\Supp(\varphi)\smallsetminus K}}{{\frac{1}{k^2}du\wedge d^{\#} u}\over u^2(\log-u)^{2}}
\right\rangle$ goes to $0$ as $k\rightarrow+\infty$. Moreover, by hypothesis, $T$ has  locally finite mass
near $K$, then $\left\langle T\wedge\beta^{p-1}
,d\varphi\wedge d^{\#}\varphi\right\rangle$ is finite. So,
$$\lim_{k\rightarrow +\infty}\left|
\left\langle T\wedge\beta^{p-1},dv_k\wedge d^{\#}\varphi\right\rangle\right|=
\lim_{k\rightarrow +\infty}\left|
\left\langle T\wedge\beta^{p-1},d^{\#}v_k\wedge d\varphi\right\rangle\right|=0.$$
By virtue of (\ref{e12}), if $dd^{\#}T$ has locally finite mass near $K$, then
$$\left\langle \widetilde{dd^{\#}T\wedge\beta^{p-1}},\varphi\right\rangle-
\left\langle dd^{\#}\widetilde{T}\wedge\beta^{p-1},\varphi\right\rangle=
\displaystyle\lim_{k\rightarrow +\infty}
\left\langle T\wedge\beta^{p-1}\wedge dd^{\#} v_k,\varphi\right\rangle.$$
Then, the current $S=\widetilde{dd^{\#}T\wedge\beta^{p-1}}-dd^{\#}\widetilde{T}\wedge\beta^{p-1}=\ds\lim_{k\rightarrow +\infty}
T\wedge\beta^{p-1}\wedge dd^{\#} v_k$ is positive and supported by $K$. In the other case when $T\wedge\beta^{p-1}$ is concave in $\{\Omega\smallsetminus K\}\times\rb^n$, it suffices to show that $dd^{\#}T\wedge\beta^{p-1}$ is locally finite near $K$. For this aim, we rewrite (\ref{e12}) for $\varphi=g$ a positive test function which is equals $1$ in a neighborhood of ${\cal\rm O}\Subset\Omega$, we obtain
$$\begin{array}{lcl}
0\geqslant\ds\int_{\Omega\times\rb^n} v_k gdd^{\#}T\wedge\beta^{p-1}
&=&\ds\int_{\Omega\times\rb^n} v_k T\wedge\beta^{p-1}\wedge dd^{\#}g+\ds\int_{\Omega\times\rb^n} gT\wedge\beta^{p-1}\wedge dd^{\#}v_k
\\&+&2\left\langle T\wedge\beta^{p-1},dv_k\wedge d^{\#}g
\right\rangle\\&\geqslant&
\ds\int_{\Omega\times\rb^n} v_k T\wedge\beta^{p-1}\wedge
dd^{\#}g + 2\left\langle T\wedge\beta^{p-1},dv_k\wedge d^{\#}g
\right\rangle.
\end{array}$$
Hence, by the preceding proof, we see that the second and the third term in the right hand side of the last inequality goes to $0$, while the sequence $\int_{\Omega\times\rb^n} v_kT\wedge\beta^{p-1}\wedge dd^{\#}g$ is bounded because $T$ has locally finite mass near $K$. Thus, $dd^{\#}T$ has locally finite mass near $K$.
\end{proofof}
\begin{cor} Under the hypothesis of Theorem \ref{t7}, if we assume that $dT\wedge\beta^{p-1}$ has locally finite mass near $K$, then $d\widetilde{T}\wedge\beta^{p-1}=\widetilde{dT\wedge\beta^{p-1}}$.
\end{cor}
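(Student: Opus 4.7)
The plan is to mimic the proof strategy of Theorem \ref{t7}, replacing the second-order operator $dd^{\#}$ by the first-order operator $d$ and re-using the same truncating sequence $v_k = \chi_{k^2}((\log(-u))^{-1/k})$, which is smooth off $K$ and increases pointwise to $\1_{\Omega\smallsetminus K}$. Starting from the Leibniz identity $d(v_k\, T\wedge\beta^{p-1}) = v_k\, dT\wedge\beta^{p-1} + dv_k\wedge T\wedge\beta^{p-1}$ and pairing with a test form $\varphi\in\mathscr{D}^{0,1}(\rb^n\times\rb^n)$ (the appropriate bidegree, since $dT\wedge\beta^{p-1}$ has bidimension $(0,1)$), Stokes formula applied to the left-hand side gives
$$\pm\langle v_k\, T\wedge\beta^{p-1},\, d\varphi\rangle \;=\; \langle v_k\, dT\wedge\beta^{p-1},\, \varphi\rangle \;+\; \langle dv_k\wedge T\wedge\beta^{p-1},\, \varphi\rangle.$$

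The next step is to pass to the limit $k\to\infty$ in the two easy terms. Since $v_k \nearrow \1_{\Omega\smallsetminus K}$ pointwise and since both $T\wedge\beta^{p-1}$ (from the standing hypothesis of Theorem \ref{t7}) and $dT\wedge\beta^{p-1}$ (by the extra hypothesis of the corollary) have locally finite mass near $K$, the dominated convergence theorem yields
$$\langle v_k\, T\wedge\beta^{p-1},\, d\varphi\rangle \;\longrightarrow\; \langle \widetilde{T}\wedge\beta^{p-1},\, d\varphi\rangle \;=\; \pm\langle d\widetilde{T}\wedge\beta^{p-1},\, \varphi\rangle,$$
$$\langle v_k\, dT\wedge\beta^{p-1},\, \varphi\rangle \;\longrightarrow\; \langle \widetilde{dT\wedge\beta^{p-1}},\, \varphi\rangle.$$
At this point the whole corollary reduces to showing that the remainder $\langle dv_k\wedge T\wedge\beta^{p-1},\, \varphi\rangle$ vanishes in the limit.

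The main obstacle is this remainder estimate, and the good news is that it is no more delicate than the analogous step already carried out in the proof of Theorem \ref{t7}. Decomposing $\varphi = \sum \varphi_j\, d\xi_j$ as $J(\widetilde\varphi)$ with $\widetilde\varphi = \sum \varphi_j\, dx_j \in \mathscr{D}^{1,0}$, the remainder becomes $(dv_k, \widetilde\varphi)$, where $(\alpha,\psi) := \int T\wedge\beta^{p-1}\wedge\alpha\wedge J(\psi)$ is the positive symmetric bilinear form on $\mathscr{D}^{1,0}$ associated to the weakly positive symmetric current $T\wedge\beta^{p-1}$. Applying the same weighted Cauchy--Schwarz inequality used in the proof of Theorem \ref{t7} (with $d^{\#}\varphi$ replaced by $J(\widetilde\varphi)$) yields an estimate of the form
$$|\langle dv_k\wedge T\wedge\beta^{p-1},\, \varphi\rangle|^2 \;\leqslant\; C\,\Bigl\langle T\wedge\beta^{p-1},\; \1_{\Supp\widetilde\varphi\smallsetminus K}\,\tfrac{1}{k^2}\,\tfrac{du\wedge d^{\#}u}{u^2(\log-u)^2}\Bigr\rangle,$$
whose right-hand side tends to zero as $k\to\infty$ thanks to Proposition \ref{6} (applied with $\alpha=1$) together with the uniform factor $1/k^2$. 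In the concave alternative of Theorem \ref{t7}, no additional argument is needed: that proof already shows that $dd^{\#}T\wedge\beta^{p-1}$ is automatically of locally finite mass near $K$ in this case, which is the only condition required by Proposition \ref{6}, so the same scheme goes through unchanged.
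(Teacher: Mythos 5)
Your argument is correct and is essentially the proof given in the paper: both regularize $\1_{\Omega\smallsetminus K}$ by a cutoff built from the potential $u$, integrate by parts, and kill the commutator term by the Cauchy--Schwarz inequality for the positive bilinear form attached to $T\wedge\beta^{p-1}$ together with Proposition \ref{6}. The only (immaterial) difference is the choice of cutoff: you reuse $v_k=\chi_{k^2}((\log(-u))^{-1/k})$ from the proof of Theorem \ref{t7} and extract the vanishing of the remainder from the explicit $1/k^2$ factor in the first Cauchy--Schwarz factor, whereas the paper uses $\rho_r((\log-u)^{-\alpha})$ and obtains it from the shrinking support of $\rho'$ in the second factor via dominated convergence.
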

In particular, we recover a very recently result obtained by Berndtsson \cite{11}, saying that if $T$ is minimal then $\widetilde T$ is also minimal. The following example shows that the trivial extension $\widetilde{dT}$ (in particularly $\widetilde{dT\wedge\beta^{p-1}}$) does not always exist in general even in the case where $T$ is smooth and $dd^{\#}T=0$ on $\{\Omega\smallsetminus K\}\times\rb^n$.
\begin{exe} Let, $$T=\left(1-\sin\frac{1}{(x_1+x_2)^2}\right)dx_1\wedge d\xi_1+\left(1+\sin\frac{1}{(x_1+x_2)^2}\right)dx_2\wedge d\xi_2,\ \forall x_1,x_2\in\rb.$$ The current $T$ is weakly positive of bidimension $(1,1)$ on $\{\rb^2\smallsetminus\{x_1+x_2=0\}\}\times\rb^2$, and it is clear that $\widetilde{T}$ and $\widetilde{dd^{\#}T}$ are exist, since $T$ is smooth and $dd^{\#}T=0$. Moreover, it is obvious that the coefficients of $dT$ are $\pm\frac{2}{(x_1+x_2)^3}\cos\frac{1}{(x_1+x_2)^2}$. Therefore, a simple computation yields
$$\lim_{n\rightarrow+\infty}\int_{\lbrace|x_2|<1,\frac{1}{\sqrt{2n\pi+\frac{\pi}{4}}}<|x_1+x_2|<\frac{1}{\sqrt{2n\pi}}\rbrace\times\rb^2}\frac{2}{|x_1+x_2|^3}\left|\cos\frac{1}{(x_1+x_2)^2}\right|dx_1\wedge d\xi_1\wedge dx_2\wedge d\xi_2=+\infty.$$
It follows that $dT$ has an infinite mass near $\{x_1+x_2=0\}$.
\end{exe}
\begin{proof} Let $\rho:\rb\l\rb$ be a smooth positive function such that $\rho(t)=0$ if $t<\frac{1}{2}$ and  $\rho(t)=1,$ for $t>1$. Let's denote by $\rho_r (t)=\rho(\frac{t}{r})$ then $\widetilde T=\displaystyle\lim_{r\rightarrow 0} \rho_r
((\log-u)^{-\alpha})T$.  Let $\varphi\in{\mathscr D}^{0,1}(\mathbb{R}^n\times\mathbb{R}^n)$, then we have
$$\begin{array}{lcl}
\left\langle d\widetilde T\wedge\beta^{p-1},\varphi \right\rangle &=& -\left\langle
\widetilde T\wedge\beta^{p-1}, d\varphi\right\rangle\\&=& -\displaystyle\lim_{r\rightarrow
0}\left\langle \rho_r ((\log-u)^{-\alpha})T\wedge\beta^{p-1},d\varphi\right\rangle\\&=&
-\displaystyle\lim_{r\rightarrow 0}\left\langle
T\wedge\beta^{p-1},d\left(\rho_r((\log-u)^{-\alpha})\varphi\right)\right\rangle +\displaystyle\lim_{r\rightarrow
0}\left\langle T\wedge\beta^{p-1},d\rho_r((\log-u)^{-\alpha})\wedge\varphi\right\rangle\\&=&
\left\langle
\widetilde{dT\wedge\beta^{p-1}},\varphi\right\rangle +\displaystyle\lim_{r\rightarrow
0}\left\langle T\wedge\beta^{p-1},d\rho_r((\log-u)^{-\alpha})\wedge\varphi\right\rangle.
\end{array}$$
By the Cauchy-Schwarz inequality, we get
$$\begin{array}{lcl}
 \left|\left\langle T\wedge\beta^{p-1},d\rho_r((\log-u)^{-\alpha})\wedge\varphi\right\rangle
\right|&=& \left|\left\langle T\wedge\beta^{p-1} ,{1\over r} \rho'(r^{-1}(\log-u)^{-\alpha}){{\alpha
du\wedge\varphi}\over (-u)(\log-u)^{\alpha+1}}\right\rangle\right|\\&\leqslant&
\left\langle T\wedge\beta^{p-1}
,\left|\rho'(r^{-1}(\log-u)^{-\alpha})r^{-1}(\log-u)^{-\alpha}\right|
{{\alpha^2 du\wedge d^{\#} u}\over u^2(\log-u)^{2}}\right\rangle^{\frac{1}{2}}\\&
&.\left\langle T\wedge\beta^{p-1}
,\left|\rho'(r^{-1}(\log-u)^{-\alpha})r^{-1}(\log-u)^{-\alpha}\right| J(\varphi)\wedge
\varphi\right\rangle^{\frac{1}{2}}.
\end{array}$$
Setting $C=\rm{sup}\{|t\rho'(t)|,\ t\in\rb
\}$.
Then
$$\begin{array}{lcl}
\left|\left\langle T\wedge\beta^{p-1},d\rho_r((\log-u)^{-\alpha})\wedge\varphi\right\rangle
\right|&\leqslant&
C^{\frac{1}{2}}\left\langle T\wedge\beta^{p-1},
{\1_{\Supp(\varphi)\smallsetminus K}}{{\alpha^2 du\wedge d^{\#} u}\over u^2(\log-u)^{2}}
\right\rangle^{\frac{1}{2}}\\& &.\left\langle T\wedge\beta^{p-1}
,\left|\rho'(r^{-1}(\log-u)^{-\alpha})r^{-1}(\log-u)^{-\alpha}\right| J(\varphi)\wedge
\varphi\right\rangle^{\frac{1}{2}}.
\end{array}$$
Thanks to Proposition \ref{6}, the first term in the right hand side is finite.
By the Lebesgue's theorem, we have
$$\lim_{r\rightarrow
0}\left\langle T\wedge\beta^{p-1}
,\left|\rho'(r^{-1}(\log-u)^{-\frac{1}{2}})r^{-1}(\log-u)^{-\frac{1}{2}}\right|J(\varphi)\wedge
\varphi\right\rangle=0 .$$
Hence, we conclude that $\ds\lim_{r\rightarrow
0}\left|\left\langle T\wedge\beta^{p-1},d\rho_r((\log-u)^{-\frac{1}{2}})\wedge\varphi\right\rangle\right|=0$.
\end{proof}
\begin{cor} Let $\Omega$ be an open subset of $\rb^n$ and  $T$ be a weakly positive current of bidimension $(p,p)$ on $\Omega\times\rb^n$ such that $T\wedge\beta^{p-1}$ is convex on $\Omega\times\rb^n$. Then, for every compact subset $K$ of $\Omega$ with sigma-finite $(p-2)$-dimensional Hausdorff measure, the current $\1_KT$ is weakly positive and the current $\1_KT\wedge\beta^{p-1}$ is convex on $\Omega\times\rb^n$.
\end{cor}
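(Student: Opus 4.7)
The plan is to apply Theorem \ref{t7} to the restriction $T':=T|_{(\Omega\smallsetminus K)\times\rb^n}$ and then reinterpret the resulting identity on $\Omega\times\rb^n$. First one checks the hypotheses: weak positivity of $T$ ensures that $T$ has measure coefficients on $\Omega\times\rb^n$, so $T'$ has locally finite mass near $K$; and since $T\wedge\beta^{p-1}$ is convex on all of $\Omega\times\rb^n$, the current $\mu:=dd^{\#}T\wedge\beta^{p-1}$ (which, being of maximal bidegree, corresponds to a positive measure) is in particular locally finite near $K$. Hence Theorem \ref{t7} produces a positive measure $S$ supported in $K$ such that
\[
\widetilde{dd^{\#}T'\wedge\beta^{p-1}}\;=\;dd^{\#}\widetilde{T'}\wedge\beta^{p-1}\;+\;S.
\]

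The key observation is that because $T$ and $\mu$ are already globally defined on $\Omega\times\rb^n$ with measure coefficients, the trivial extensions by $0$ across $K\times\rb^n$ admit the simple description $\widetilde{T'}=T-\mathbf{1}_KT$ and $\widetilde{dd^{\#}T'\wedge\beta^{p-1}}=\mu-\mathbf{1}_K\mu$. Substituting into the identity above and using that $\beta$ is $d$- and $d^{\#}$-closed, so that $dd^{\#}(\mathbf{1}_KT)\wedge\beta^{p-1}=dd^{\#}T\wedge\beta^{p-1}-dd^{\#}\widetilde{T'}\wedge\beta^{p-1}$, one obtains
\[
dd^{\#}(\mathbf{1}_KT)\wedge\beta^{p-1}\;=\;\mu-\bigl(\widetilde{dd^{\#}T'\wedge\beta^{p-1}}-S\bigr)\;=\;\mathbf{1}_K\mu+S,
\]
which is a sum of two non-negative measures and therefore a positive measure. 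This is exactly the convexity of $\mathbf{1}_KT\wedge\beta^{p-1}$. For the weak positivity of $\mathbf{1}_KT$, recall that for any simple strongly positive form $\gamma=\alpha_1\wedge J(\alpha_1)\wedge\cdots\wedge\alpha_{n-p}\wedge J(\alpha_{n-p})$ with $\alpha_i\in\mathscr{E}^{1,0}$, $T\wedge\gamma$ is a positive measure, hence so is $(\mathbf{1}_KT)\wedge\gamma=\mathbf{1}_K(T\wedge\gamma)$; pairing with a positive smooth function and extending linearly to strongly positive test forms in $\mathscr{D}^{n-p,n-p}$ yields the claim.

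The only delicate point is the identification of the trivial extension $\widetilde{T'}$ produced by the proof of Theorem \ref{t7} with the coefficient-wise product $\mathbf{1}_{\Omega\smallsetminus K}T$. This is handled by noting that the cut-off sequence $v_kT$ used in the proof of Theorem \ref{t7}, with $v_k\nearrow\mathbf{1}_{\Omega\smallsetminus K}$, converges coefficient-wise by monotone convergence to $\mathbf{1}_{\Omega\smallsetminus K}T$ whenever $T$ already has measure coefficients on all of $\Omega\times\rb^n$; the analogous identification for $\mu$ is immediate since $\mu$ is a positive Borel measure. All remaining manipulations are linear and local, so no further subtlety arises.
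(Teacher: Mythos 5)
Your proposal is correct and follows essentially the same route as the paper: set $T'=\1_{\Omega\smallsetminus K}T$, apply Theorem \ref{t7} to get $\widetilde{dd^{\#}T'\wedge\beta^{p-1}}=dd^{\#}\widetilde{T'}\wedge\beta^{p-1}+S$, and use $\widetilde{T'}=T-\1_KT$ to deduce $dd^{\#}(\1_KT)\wedge\beta^{p-1}=\1_K\bigl(dd^{\#}T\wedge\beta^{p-1}\bigr)+S\geqslant 0$. The extra details you supply --- verifying that convexity of $T\wedge\beta^{p-1}$ gives the local finiteness of $dd^{\#}T\wedge\beta^{p-1}$ near $K$ required by Theorem \ref{t7}, the explicit check that $\1_KT$ is weakly positive, and the identification of the trivial extension with the coefficient-wise product --- are all points the paper leaves implicit, and they are handled correctly.
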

\begin{proof} Let $T'=\1_{\Omega\smallsetminus K}T$, then $\widetilde{T'}=T-\1_KT$. Thanks to Theorem \ref{t7}, we obtain
$$\widetilde{dd^{\#}T'\wedge\beta^{p-1}}=dd^{\#}\widetilde{T'}\wedge\beta^{p-1}+S,$$
where $S$ is a positive measure. Therefore,
$$\begin{array}{lcl}
dd^{\#}(\1_KT)\wedge\beta^{p-1}=dd^{\#}(T-\widetilde{T'})\wedge\beta^{p-1}&=&dd^{\#}T\wedge\beta^{p-1}-dd^{\#}\widetilde{T'}\wedge\beta^{p-1}\\&=&dd^{\#}T\wedge\beta^{p-1}-\widetilde{dd^{\#}T'\wedge\beta^{p-1}}+S\\&=&(dd^{\#}T\wedge\beta^{p-1})-\widetilde{\1_{\Omega\smallsetminus K}(dd^{\#}T\wedge\beta^{p-1})}+S\\&=&\1_{K}(dd^{\#}T\wedge\beta^{p-1})+S,
\end{array}$$
and the desired result will follows, since $T\wedge\beta^{p-1}$ is convex.
\end{proof}

\end{document}